\numberwithin{figure}{section}
\numberwithin{table}{section}
\newtheorem{theorem}{Theorem}[section]
\newtheorem{lemma}[theorem]{Lemma}
\newtheorem{prop}[theorem]{Proposition}
\newtheorem{cor}[theorem]{Corollary}
\theoremstyle{definition}
\newtheorem{definition}[theorem]{Definition}
\newtheorem{example}[theorem]{Example}
\theoremstyle{remark}
\newtheorem{remark}[theorem]{Remark}
\numberwithin{equation}{section}
\def \R{{\mathbb R}}
\def \h{{\mathfrak h}}
\def \Z{{\mathbb Z}}
\def \S{{\mathbb S}}
\def \[{[ }
\def \]{] }
\def \L{{\mathcal L}}
\def \P{\r{P}}
\def \SS{\mathcal S}
\def \P{\mathfrak P}
\def \DP{{\mathfrak D\mathfrak P}}
\def \ADP{{\mathfrak A\mathfrak D\mathfrak P}}
\def \Top{{\mathfrak T\mathfrak o\mathfrak p}}
\def \TopDP{{\mathfrak T\mathfrak o\mathfrak p\mathfrak D\mathfrak P}}
\begin{document}

\author{Anna Felikson}
\address{Independent University of Moscow}
\curraddr{Jacobs University Bremen}
\email{a.felikson@jacobs-university.de}
\thanks{Research of the second author is supported by grants RFBR 10-01-00678 and NSh 709.2008.1}

\author{Sergey Natanzon}
\address{Higher School of Economics, Moscow  \phantom{wwwwwwwwwwwwwwwwwwwwwwwwwwww}
Belozersky Institute of Physico-Chemical Biology, Moscow State University,\phantom{wwwww}
Institute Theoretical and Experimental Physics}
\email{natanzons@mail.ru}



\title{Double pants decompositions of 2-surfaces}

\begin{abstract} 
We consider a union of two pants decompositions of the same orientable 2-dimensional surface of any genus $g$. 
Each pants decomposition corresponds to a 
handlebody bounded by this surface, so two pants decompositions correspond to a Heegaard splitting of a 3-manifold.
We introduce a groupoid $FT$ acting on double pants decompositions. 
This groupoid is generated by two simple transformations (called flips and handle twists),
 each transformation changing only one curve of the double pants decomposition.
We prove that $FT$ acts transitively on all double pants
decompositions corresponding to Heegaard splittings of a 3-dimensional sphere.
As a corollary, we prove that the mapping class group of the surface is contained in $FT$.

\end{abstract}


\maketitle

\tableofcontents

\section*{Introduction}

Consider a 2-dimensional orientable surface $S$ of genus $g$ with $n$ holes. 
A pants decomposition of $S$ is a decomposition into 3-holed spheres  
(called ``pairs of pants''). For each pants decomposition $P$ of $S$ one may construct a handlebody $S_+$ such that $S$ is 
the boundary of $S_+$  and all curves of $P$ are contractible inside $S_+$. A union of two pants 
decompositions of the same surface define two different handlebodies bounded by $S$, attaching these
 handlebodies along $S$  one obtains a Heegaard splitting of some 3-manifold.

Below we define double pants decompositions of surfaces as a union of two pants decompositions (with an additional assumption
that the homology classes of the curves contained in the double pants decomposition generate the entire homology lattice $H_1(S,\Z)$).
We introduce a simple groupoid acting on double pants decompositions and prove that this groupoid acts transitively on all double pants
 decompositions (of the same surface) resulting in Heegaard splitting of a 3-sphere.

More precisely,  
there are two simple invertible operations acting on the set of double pants decompositions.
 One of these operations is a flip (or Whitehead move)
performed in any of the decompositions. Another operation is called a ``handle twist'', it is performed only if a handle is cut out by 
a curve contained in both pants decompositions. In this case the handle contains exactly one curve from each of 
the two pants decompositions, and one applies a Dehn twist along one of them to the other.

\medskip
\noindent
{\bf Main Theorem.}
{\it Let $S$ be a topological 2-surface of genus $g$ with $n$ holes,
where  $2g+n> 2$.
Then the  groupoid generated by flips and handle twists acts transitively on
the set of all double pants decompositions of $S$ corresponding to Heegaard splittings of 3-sphere}. 

\medskip

For the case of surfaces with holes one defines handlebodies with  marked 2-disks on the boundary
and glues two handlebodies so that the marked disks are attached to marked disks.
Slightly refining the definition of a pair of pants one may also substitute all or some of the holes by marked points.

It is possible to show that the mapping class group acts effectively on double pants decompositions 
of some special combinatorial class. Together with the Main Theorem this implies the following result: 

\medskip
\noindent
{\bf Corollary.}
{\it The category of double pants decompositions of a topological surface 
with morphisms generated by flips and handle twists
contains a subcategory isomorphic to a category of topological surfaces with the mapping class group as the set of morphisms}.

\medskip

The idea to consider two pants decompositions as a Heegaard splitting of a 3-manifold already appeared in the literature
(\cite{CG},~\cite{He},~\cite{luo}).
The main tool of study there is the curve complex defined by Harvey~\cite{Har}, which is a simplicial complex whose 
 vertices are homotopy classes of closed curves on the surface and simplices are spanned by mutually disjoint curves. 
In particular, Hempel~\cite{He} defines a distance between two pants decompositions in terms of the distance in the curve complex
and uses this distance to describe topological properties of the resulting 3-manifolds.
Our construction is also naturally related to the pants complex described by Hatcher and Thurston (\cite{HT},~\cite{H1})
and to investigation of the mapping class group through various complexes (\cite{ABP},~\cite{BK},~\cite{I},~\cite{IK},~\cite{M}
and many others).
The notion of a groupoid generated by flips and twists on the Heegaard splittings seems to be new.

The paper is organized as follows. 
In Section~\ref{def} we discuss properties of ordinary pants decompositions. 
In Section~\ref{secDP} we introduce double pants decompositions, admissible double pants decompositions and the groupoid 
generated by flips and twists.
We prefer to work 
with surfaces containing neither holes nor marked points and postpone all details concerning the holes (marked points) till
Section~\ref{s open}.
In Section~\ref{g=2}, we prove transitivity theorem for the case of surfaces of genus $g=2$ without marked points (Theorem~\ref{g=2}). 
In Section~\ref{g>2}, we prove the  Main Theorem for the case of surfaces of any genus (Theorem~\ref{trans}). 
In Section~\ref{s open}, we extend basic definitions to the case of surfaces with holes or marked points and
complete  the proof of Main Theorem.
Finally, in Section~\ref{sec top},  we prove the Corollary (Theorem~\ref{top}).

\medskip


\medskip
\noindent
{\bf Acknowledgments.} We are grateful to Allen Hatcher, Saul Schleimer,
 Anton Zorich, Karen Vogtmann  and Ursula Hamenst\"adt for many helpful conversations. 
The work was initiated during the authors' stay at Max Planck Institute for Mathematics and completed during the stay of the first 
author at the same Institute. We are grateful to the Institute for hospitality, support and a nice working atmosphere.
Finally, we are grateful to the anonymous referee for a bunch of valuable suggestions.

\section{Pants decompositions}
\label{def}

\subsection{Zipped pants decompositions}

Let $S=S_{g,n}$ be an oriented surface of genus $g$ with $n$ or marked points, 
where $2g+n>2$.
Throughout Sections~\ref{def}--\ref{g>2} we assume in addition $n=0$ (this assumption will be removed in   
Section~\ref{s open}). 


A {\it curve} $c$ on $S$ is an embedded closed non-contractible curve considered up to a homotopy of $S$. 

Given a set of curves we always assume that there are no ``unnecessary intersections'', so that if two curves of this set
intersect each other in $k$ points then there are no homotopy equivalent pair of curves intersecting in less than $k$ points.

For a pair of curves $c_1$ and $c_2$ we denote by $|c_1\cap c_2|$ the geometric number of intersections 
(i.e. counting without multiplicities) of $c_1$ with $c_2$.

\begin{definition}[{{\it Pants decomposition}}]
A {\it pants decomposition} $P$ of $S$ is a system of nonoriented mutually disjoint curves 
$P=P_S=\{ c_1,\dots,c_k \}$  decomposing  $S$ into pairs of pants 
(i.e. into spheres with 3 holes).

\end{definition}

It is easy to see that any pants decomposition of a surface of genus $g$ consists of $3g-3$ curves.
Note, that we do allow self-folded pants, two of whose boundary components are identified in $S$ as shown in Fig.~\ref{pants}.

A surface which consists of one self-folded pair of pants will be called {\it handle}.

We say that a curve $c\in P$ is {\it non-regular} if $c$ is contained in a handle $\h$ such that the boundary $c_1$ of $\h$
 is also contained in $P$ 
(see Fig.~\ref{pants}). Otherwise, we say that $c$   is {\it regular}.

\begin{remark}
A set of curves forming a pants decomposition is maximal in sense that  any larger set of mutually disjoint non-oriented curves 
contains  homotopy equivalent curves.

\end{remark}

\begin{figure}[!h]
\begin{center}
\psfrag{c}{\scriptsize $c$} 
\psfrag{c1}{\scriptsize $c_1$} 
\psfrag{a}{\small (a)}
\psfrag{b}{\small (b)}
\epsfig{file=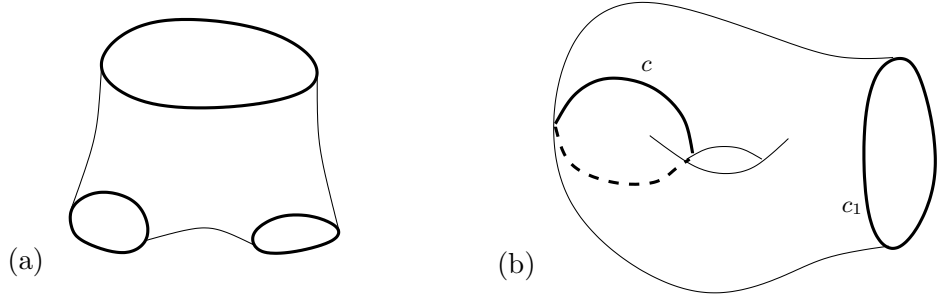,width=0.8\linewidth}
\caption{(a) A pair of pants; (b) a pair of self-folded pants composing a handle (the handle contains a  non-regular curve $c$).} 
\label{pants}
\end{center}
\end{figure}

\begin{definition}[{{\it Zipper system}}]
\label{zip}
A union $Z=\{ z_1,\dots z_{g+1} \}$  of mutually disjoint curves is a {\it zipper system}
if $Z$ decomposes $S$ into two spheres with $g+1$ holes.

\end{definition}

\begin{definition}[{{\it $Z$ compatible with $P$}}]
A zipper system $Z$ is {\it compatible with a pants decomposition} $P=\{ c_1,\dots,c_{3g-3}\}$ 
if $|c_i\bigcap (\cup_{j=1}^{g+1}z_j)|=2$ for each $i=1,\dots,3g-3$.

\end{definition}

Fig.~\ref{zip-ex} contains an example of a zipper system $Z$ compatible with a pants decomposition $P$.

\begin{figure}[!h]
\begin{center}
\psfrag{P}{\small $P$}
\psfrag{Z}{\small $Z$}
\epsfig{file=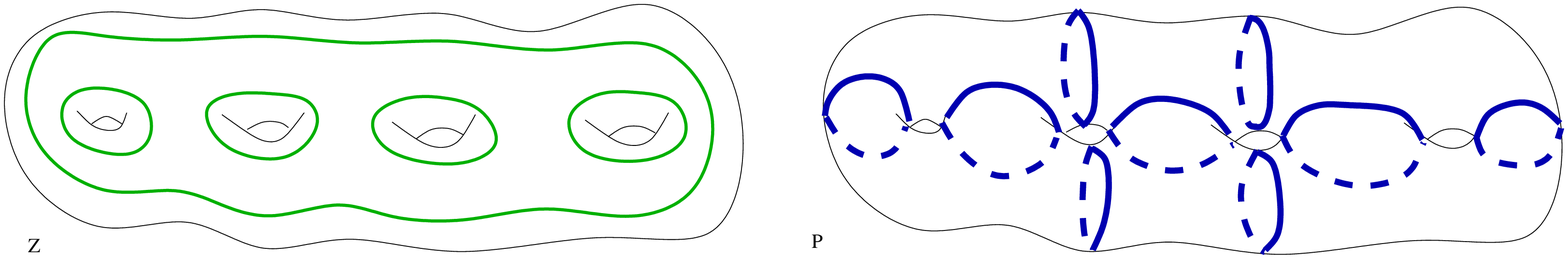,width=0.9\linewidth}
\caption{ A zipper system $Z$ compatible with a pants decomposition $P$.} 
\label{zip-ex}
\end{center}
\end{figure}

\begin{lemma}
\label{compatible zipper}
If $Z$ is a zipper system compatible with  a pants decomposition $P$ then 
$\cup_{j=1}^{g+1}z_j$ decomposes each pair of pants in $P$ into two hexagons.

\end{lemma}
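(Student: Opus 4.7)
The plan is to show that the only combinatorial configuration of $Z \cap \Pi$ consistent with the hypotheses is the ``triangular'' one, which obviously splits $\Pi$ into two hexagons. I would first note that, by compatibility, each of the three boundary circles $c_1,c_2,c_3$ of $\Pi$ meets $Z$ in exactly two points, so $Z \cap \Pi$ has six endpoints on $\partial \Pi$ and is therefore a disjoint union of exactly three properly embedded arcs $\alpha_1,\alpha_2,\alpha_3$. Each $\alpha_\ell$ is moreover essential in $\Pi$: if $\alpha_\ell \subset z_j$ cobounded a bigon with a sub-arc of some $c_i$, an isotopy of $z_j$ across this bigon would reduce $|c_i \cap z_j|$, contradicting the ``no unnecessary intersections'' convention together with $|c_i \cap Z|=2$.

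The main step is a case analysis of how $\alpha_1,\alpha_2,\alpha_3$ are attached to $c_1,c_2,c_3$. Let $n_{ij}$ denote the number of arcs having one endpoint on $c_i$ and one on $c_j$, where we allow $i = j$ (a self-loop then contributes two endpoints to $c_i$). The conditions $2 n_{ii} + \sum_{j \neq i} n_{ij} = 2$ for each $i$, together with $\sum_{i\le j} n_{ij} = 3$, leave, up to relabelling of the $c_i$'s, exactly three possibilities: \textbf{(A1)} $n_{12} = n_{13} = n_{23} = 1$; \textbf{(A2)} $n_{ij} = 2$ and $n_{kk}=1$ for some partition $\{i,j,k\} = \{1,2,3\}$; \textbf{(A3)} $n_{11}=n_{22}=n_{33}=1$.

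The hard part will be ruling out (A2) and (A3). The key observation is that any essential arc $\gamma$ with both endpoints on a single boundary circle $c_k$ of $\Pi$ separates $\Pi$ into two annuli, each containing exactly one of $c_i, c_j$; this follows from a short Euler-characteristic count (the two pieces of $\Pi\setminus\gamma$ have $\chi$ summing to $0$, and essentiality of $\gamma$ forbids a disk piece, forcing both pieces to be annuli). Case (A2) is then immediate: the self-loop on $c_k$ separates $c_i$ from $c_j$, which is incompatible with the existence of two disjoint arcs joining them. For (A3), the self-loop on $c_1$ cuts off an annulus $A$ around $c_2$; since $\alpha_{22}$ is disjoint from $\alpha_{11}$ and has its two endpoints on $c_2 \subset A$, it lies entirely in $A$ with both endpoints on one boundary component of $A$. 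In an annulus every essential arc connects the two boundary components, so $\alpha_{22}$ is boundary-parallel in $A$, hence in $\Pi$--contradicting its essentiality.

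Only (A1) survives. To conclude, I would cut $\Pi$ successively along $\alpha_{12}$, $\alpha_{23}$, $\alpha_{13}$: the first cut turns $\Pi$ into an annulus with $c_3$ as one boundary, the second (an arc joining the two boundaries of that annulus) turns it into a disk, and the third cut subdivides the disk into two disks. Each of the two final disks carries on its boundary three arcs from $\partial\Pi$ (one from each $c_i$) alternating with three arcs of $Z$, so each is a hexagon, as required. The only truly non-routine ingredient is the annulus argument ruling out (A3); the remaining verifications are essentially combinatorial bookkeeping.
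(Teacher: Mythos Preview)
Your argument is correct and follows the same line as the paper's: reduce to showing that no arc of $Z\cap\Pi$ can have both endpoints on a single boundary circle, after which the hexagon pattern is forced by the endpoint count. The paper does this in one stroke---asserting, with reference to a figure, that a self-returning arc would force $|c_i\cap Z|\neq 2$ for some boundary curve---while you unpack the same point into the explicit case analysis (A1)--(A3) and supply the separation/annulus reasoning that the paper leaves to the picture; so your version is more detailed but not genuinely different.
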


\begin{proof}
Suppose that a curve $z_j$ intersects a curve $c_i$ contained in the boundary of a pair of pants $p_1$.
The curves of the pants decomposition cut $z_j$ into segments.
Let $l$ be a segment of $z_j$ (or one of such segments) contained in $p_1$.
Since curves do not have unnecessary intersections, $l$ looks as shown in Fig.~\ref{segment}(a) or (b).
If $l$ looks as  in Fig.~\ref{segment}(a) then for some of the three boundary curves of $p_1$ the condition
 $|c_i\bigcap(\cup_{j=1}^{g+1}z_j)|=2$ is broken.
This implies that it is as one shown in Fig.~\ref{segment}(b).  
Therefore, $p_1$ looks like in  Fig.~\ref{segment}(c),
i.e. $p_1$ is decomposed into two hexagons. 

\end{proof}

\begin{remark}
\label{involution}
If $Z$ is a zipper system compatible with  a pants decomposition $P$ then there exists an orientation reversing involution 
$\sigma$ of $S$ such that 
$\sigma$ preserves $Z$ pointwise and such that $\sigma(c_i)=c_i$ for each $c_i\in P$. To build this involution one needs only to switch 
the pairs of hexagons described in Lemma~\ref{compatible zipper}. 

\end{remark}

\begin{figure}[!h]
\begin{center}
\psfrag{a}{\small (a)}
\psfrag{b}{\small (b)}
\psfrag{c}{\small (c)}
\epsfig{file=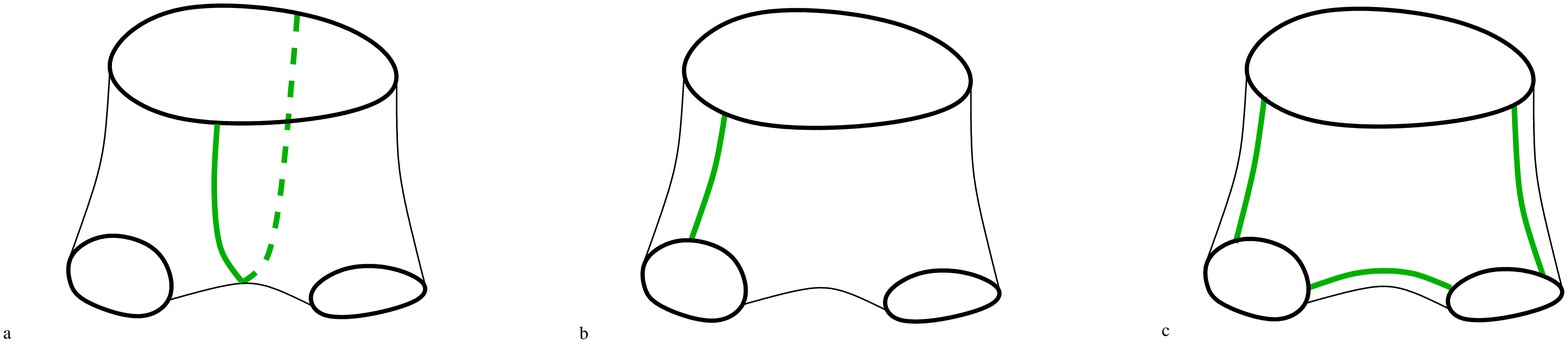,width=0.9\linewidth}
\caption{ Zipper system $Z$ decomposes each pair of pants into two hexagons.}
\label{segment}
\end{center}
\end{figure}

\begin{definition}[{{\it Zipped flip}}]
\label{flip}
Given a pants decomposition $P=\{ c_1,\dots,c_{3g-3} \}$ and a zipper system $Z$ compatible with $P$ we define a 
 {\it zipped flip} of pants decomposition as it is shown in Fig.~\ref{oriented flip}.
Formally speaking,  
a {\it zipped flip} $f_i$ of a pants decomposition $P=\{ c_1,\dots,c_{3g-3} \}$  {\it in the curve $c_i$ }is a
replacing  a regular curve $c_i\subset P$ by a unique curve $c_i'$ satisfying the following properties:
\begin{itemize}
\item $c_i'$ does not coincide with any of $c_1,\dots,c_{3g-3}$;
\item $c_i'$ intersects $Z$ exactly in two points;
\item $c_i'\cap c_j=\emptyset$ for all $j\ne i$.
\end{itemize}

\end{definition}

Clearly, $f_i(P)$ is a new pants decomposition of $S$, and $Z$ is a zipper system compatible with  $f_i(P)$.
The uniqueness of the curve $c_i'$ satisfying the properties in Definition~\ref{flip} verifies trivially.
In particular, it is easy to see that $f_i\circ f_i(c_i)=c_i$.

\begin{figure}[!h]
\begin{center}
\psfrag{f}{\scriptsize $f_i$}
\psfrag{c}{\scriptsize $c_i$}
\psfrag{c'}{\scriptsize $c_i'$}
\epsfig{file=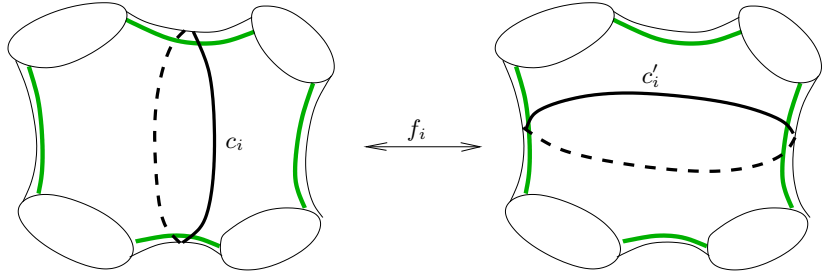,width=0.7\linewidth}
\caption{A zipped flip of a pants decomposition.} 
\label{oriented flip}
\end{center}
\end{figure}

\subsection{Unzipped pants decompositions}

\begin{definition}[{{\it Lagrangian plane of pants decomposition}}]
Let $P=\{ c_1,\dots,c_{3g-3}\}$ be a pants decomposition. A Lagrangian plane  $\L(P)\subset H_1(S,\Z)$ is a sublattice 
spanned by the homology classes $h(c_i)$, $i=1,\dots,3g-3$ (here $c_i$ is taken with any orientation).

\end{definition}

\medskip
\noindent
{\bf Notation.} By $\langle h(c_1,),\dots,h(c_k) \rangle$ we denote the sublattice of $H_1(S,\Z)$ spanned by
$h(c_1),\dots,h(c_k)$, so that $\L(P)=\langle h(c_1,),\dots,h(c_{3g-3}) \rangle$.
 
\medskip

\begin{definition}[{{\it Unzipped flip}}]
Let $P=\{ c_1,\dots,c_n  \}$ be a pants decomposition.
Define an {\it unzipped flip  of $P$ in the curve} $c_i$ (or just a {\it flip}) as  
a replacing  a regular curve $c_i\subset P$ by any curve $c_i'$ satisfying the following properties:
\begin{itemize}
\item $c_i'$ does not coincide with any of $c_1,\dots,c_n$;
\item $|c_i'\cap c_i|=2$;
\item $c_i'\cap c_j=\emptyset$ for all $j\ne i$.
\end{itemize}

\end{definition}

\begin{prop}
 $\L(P)=\L(f(P))$ for any unzipped flip $f$ of $P$.

\end{prop}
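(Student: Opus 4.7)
The plan is to localize the computation to the smallest subsurface affected by the flip. Since $f$ replaces only $c_i$ by $c_i'$ and fixes all other curves, proving $\L(P)=\L(f(P))$ reduces to showing $h(c_i') \in \L(P)$ and $h(c_i) \in \L(f(P))$.

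First I would identify the subsurface $\Sigma \subseteq S$ equal to the closure of the union of the two pairs of pants of $P$ adjacent to $c_i$. Both $c_i$ and $c_i'$ lie in the interior of $\Sigma$, and $\partial \Sigma$ is a union of curves of $P$. Regularity of $c_i$ together with a short case analysis according to whether the adjacent pants are self-folded pinpoints $\Sigma$ as one of:
\begin{itemize}
\item[(a)] a $4$-holed sphere, when neither adjacent pair of pants is self-folded;
\item[(b)] a $2$-holed torus containing exactly one interior non-regular $P$-curve, when exactly one adjacent pair of pants is self-folded;
\item[(c)] a closed genus-$2$ surface (so $\Sigma=S$) containing two interior non-regular $P$-curves, when both adjacent pants are self-folded.
\end{itemize}

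The main step is the homology computation in each case. I would use two standard facts: for any compact oriented surface with boundary, the sum of outward-oriented boundary classes vanishes in $H_1$; and cutting $\Sigma$ along its interior non-regular $P$-curves yields a planar surface (a sphere with holes) whose first homology is generated by the classes of its boundary components. Since $c_i'$ is disjoint from $\partial \Sigma$ and from any interior non-regular $P$-curves, it lifts to a simple closed curve in this planar cut surface; hence its class in $H_1(\Sigma,\Z)$, and therefore in $H_1(S,\Z)$, is a $\Z$-linear combination of $h$-classes of $\partial \Sigma$ and of the interior non-regular $P$-curves. All of these are curves of $P$, so $h(c_i') \in \L(P)$.

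The same argument applies with $P$ replaced by $f(P)$ and the roles of $c_i$ and $c_i'$ exchanged: the subsurface $\Sigma$ is unchanged, its boundary and interior non-regular curves belong both to $P$ and to $f(P)$, and $c_i$ is still disjoint from them, so $h(c_i) \in \L(f(P))$. Combining the two inclusions yields $\L(P)=\L(f(P))$. The main obstacle is the bookkeeping for the non-generic configurations (b) and (c); once the correct local subsurface $\Sigma$ is identified, the homology computation in each case reduces to a routine verification in a low-complexity surface with boundary.
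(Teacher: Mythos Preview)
Your argument is correct and follows the same basic localization as the paper: restrict attention to the union $\Sigma=p_1\cup p_2$ of the two pairs of pants adjacent to the flipped curve, and show that $h(c_i')$ lies in the span of the $P$-curves appearing in $\Sigma$. The paper, however, only writes out the generic case (your case (a)), where $\Sigma$ is a four-holed sphere with boundary curves $a,b,c,d\in P$; it then records the explicit identity $h(f(e))=h(a)+h(b)=h(c)+h(d)$ coming from the fact that $f(e)$ separates $\Sigma$ into two pairs of pants. Your treatment is more careful: by cutting $\Sigma$ along the interior non-regular $P$-curves you reduce all three cases uniformly to a planar surface, at the cost of losing the explicit formula. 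Both approaches are short; yours is cleaner bookkeeping for the self-folded configurations, which the paper leaves implicit.
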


\begin{proof}
Let $p_1\cup p_2$ be two pairs of pants glued along a curve $e$ changed under the flip $f$.
Let $a,b,c,d$ be   four curves  cutting $p_1\cup p_2$ out of $S$.
Suppose that $f(e)$ separates $a$ and $b$ from $c$ and $d$ in  $p_1\cup p_2$. 
Denote by $h(c)$ the homology class of $c$.
Then 
$$ h(f(e))= h(a)+h(b)=h(c)+h(d)\in \L(P).$$
The similar relation holds when $f(e)$ separates $a$ and $c$ from $b$ and $d$ or $a$ and $d$ from $b$ and $c$. 

\end{proof}

\begin{lemma}
\label{flip modulo twist}
Let $P$ be a pants decomposition and $c\in P$ be a curve. A flip $f$ of $P$ in the curve $c$ is defined uniquely by a homology class of $f(c)$ up to a Dehn twist along $c$.

\end{lemma}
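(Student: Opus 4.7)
The plan is to localize the flip to a neighborhood of $c$ and classify the flip curves up to the ambiguity of Dehn twists along $c$. Since $f(c)$ is disjoint from every $c_j\in P$ with $j\neq i$, it lies inside the subsurface $S_c\subset S$ formed by the union of the (one or two) pants of $P$ incident to $c$. For a regular $c$, this subsurface $S_c$ is a 4-holed sphere when the two pants of $P$ adjacent to $c$ are distinct and neither is a handle, a 2-holed torus when one of them is a handle, and an analogous low-complexity surface in the remaining special cases. Inside $S_c$ the curve $f(c)$ is essential, disjoint from $\partial S_c$, and meets $c$ in exactly two points; cutting $f(c)$ along $c$ therefore produces two properly embedded arcs, one in each pants piece of $S_c$, each with both endpoints on $c$.

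The key combinatorial input is that in a pants with boundaries $c,\alpha,\beta$, an essential properly embedded arc with both endpoints on $c$ admits exactly two proper isotopy types, distinguished by which of $\alpha,\beta$ it encloses; equivalently, closing such an arc by an appropriate sub-arc of $c$ yields a loop freely homotopic to $\alpha$ or to $\beta$. Once the pair of arc types is fixed, $f(c)$ is recovered by re-gluing the two arcs along $c$ with an integer ``twist'' parameter, and varying this parameter is exactly the action of the Dehn twist $T_c$ on $f(c)$. So modulo $T_c$, the flip curve $f(c)$ is determined by the pair of arc types alone.

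It remains to show that the homology class of $f(c)$ in $H_1(S,\Z)$ detects this pair of arc types. If the arcs enclose $\alpha_1\subset\partial p_1$ and $\alpha_2\subset\partial p_2$ respectively, then, with appropriate orientations, closing each arc along a sub-arc of $c$ and summing yields $[f(c)]\equiv[\alpha_1]+[\alpha_2]\pmod{\langle[c]\rangle}$; using the pants relation $[c]+[\alpha_i]+[\beta_i]=0$, switching the arc type in $p_i$ alters this residue by $-2[\alpha_i]$. The main obstacle is to verify, by a case analysis over the possible topologies of $S_c$, that this change is always non-trivial in $H_1(S,\Z)/\langle[c]\rangle$ up to the sign ambiguity coming from unorientedness, so that the homology class of $f(c)$ uniquely determines the pair of arc types modulo $T_c$; granting this, two flips yielding curves with the same homology class must share the same arc-type pair and therefore differ only by a Dehn twist along $c$, as the lemma asserts.
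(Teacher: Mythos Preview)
Your overall strategy matches the paper's: localize to $M=p_1\cup p_2$, cut $f(c)$ along $c$ into two arcs, and observe that the remaining freedom in reassembling them is a power of $T_c$. However, your key combinatorial claim is wrong: in a pair of pants with boundaries $c,\alpha,\beta$ there is exactly \emph{one} proper isotopy class of essential arc with both endpoints on $c$ (any such arc necessarily separates $\alpha$ from $\beta$), not two. The genuine two-fold ambiguity lies not in the arcs themselves but in the gluing across $c$: the two endpoints of the arc in $p_1$ split $c$ into an ``$\alpha_1$-side'' sub-arc and a ``$\beta_1$-side'' sub-arc, and there are two ways, modulo $T_c$, to match these with the corresponding sides in $p_2$. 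This is precisely what the paper means by ``which of the ends of $f(c)\cap p_1$ are glued to which of the ends of $f(c)\cap p_2$''. Your homology computation $[\alpha_1]+[\alpha_2]$ versus $[\alpha_1]+[\beta_2]$ is then the correct invariant distinguishing the two gluings, and since these classes already differ in $H_1(M)$ (a $4$-holed sphere), the case analysis over the topologies of $S_c$ that you flag as ``the main obstacle'' is not needed at the level the paper works; the paper simply asserts this determination. Once you correct the arc count and read the homology in $M$ rather than in $S$, your argument is complete and coincides with the paper's.
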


\begin{proof}
Consider two pairs of pants $p_1$ and $p_2$ adjacent to $c$, let $M=p_1\cup p_2 $. Since $|(f(c)\cap c)|=2$ and 
$f(c)\cap \partial M=\emptyset$, the segment $f(c)\cap p_i$, $i=1,2$ looks as shown in Fig.~\ref{segment}(a). 
The homology class of $f(c)$ defines which of the ends of $f(c)\cap p_1$ are glued to which of the ends of $f(c)\cap p_2$.   
So, the only freedom in gluing of $p_1$ to $p_2$ is generated by a Dehn twist along $c$. 

\end{proof}

The result of Lemma~\ref{flip modulo twist} may be restated as follows.

\begin{lemma}
\label{zip for flip}
Let $P$ be a pants decomposition compatible with a zipper system $Z$. Let $f_c$ be a flip of $P$ in a curve $c\in P$.
Then $f_c$ is a zipped flip for some $Z'=T_c^m(Z)$, where  $m\in \Z$ and $T_c$ is a Dehn twist along $c$.  

\end{lemma}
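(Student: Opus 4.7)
The plan is to compare the given unzipped flip $f_c$ with the (unique) zipped flip of $P$ with respect to $Z$, and to realise the required $Z'$ by twisting $Z$ along $c$.

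First I would let $\tilde{f}_c$ denote the zipped flip of $P$ relative to $Z$, producing a curve $\tilde{c}'$ (uniquely determined by Definition~\ref{flip}). Both $\tilde{c}'$ and the curve $c':=f_c(c)$ meet $c$ in exactly two points, are disjoint from $P\setminus\{c\}$, and differ from every curve of $P$, so both are unzipped flips of $P$ in $c$. By Lemma~\ref{flip modulo twist}, any two flips of $P$ in $c$ whose new curves share a homology class differ by a power of the Dehn twist $T_c$. Granting that $c'$ and $\tilde{c}'$ share a common homology class, one concludes $c'=T_c^m(\tilde{c}')$ for some $m\in\Z$.

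Next I would set $Z':=T_c^m(Z)$. Because $T_c$ is supported in an arbitrarily thin annular neighbourhood of $c$, it fixes $c$ pointwise and leaves every other curve of $P$ (being disjoint from $c$) invariant up to isotopy. Hence $|c_j\cap Z'|=|c_j\cap Z|=2$ for every $c_j\in P$, and $Z'$ still splits $S$ into two $(g+1)$-holed spheres; so $Z'$ is a zipper system compatible with $P$.

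Finally, applying the homeomorphism $T_c^m$ to the configuration $(P,Z,\tilde{c}')$ produces $(P,Z',c')$: the curve $c'=T_c^m(\tilde c')$ meets $Z'$ in exactly two points and is disjoint from $P\setminus\{c\}$. By the uniqueness part of Definition~\ref{flip}, $c'$ must therefore coincide with the zipped flip of $P$ in $c$ with respect to $Z'$; equivalently, $f_c$ is a zipped flip for $Z'$. The principal obstacle I anticipate is justifying that $c'$ and $\tilde{c}'$ really do lie in the same $T_c$-orbit (i.e.\ have matching homology classes); this step is most naturally handled by a local analysis inside the sub-surface $p_1\cup p_2$ glued along $c$, arguing that the admissible homology classes of an unzipped flip are all reached by $T_c$-twists of the fixed zipped flip.
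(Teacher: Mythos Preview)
The paper does not give a separate proof of this lemma; it merely introduces it with the sentence ``The result of Lemma~\ref{flip modulo twist} may be restated as follows.'' Your attempt is thus exactly an effort to make that asserted equivalence rigorous, and your overall strategy --- compare the given flip $c'$ with the zipped flip $\tilde c'$, invoke Lemma~\ref{flip modulo twist}, then transport by $T_c^m$ --- is the natural reading of the paper's one-line justification. Everything you write up to the last sentence is correct and matches what the paper intends.

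However, the obstacle you flag at the end is genuine and cannot be resolved in the way you propose. In the four-holed sphere $M=p_1\cup p_2$ with outer boundary $a,b,d,e$ (where $c$ separates $\{a,b\}$ from $\{d,e\}$), an unzipped flip of $c$ separates $M$ either as $\{a,d\}\mid\{b,e\}$ or as $\{a,e\}\mid\{b,d\}$; these are precisely the ``exactly two possibilities modulo $T_c$'' the paper records in Example~\ref{S_2-orbit}. Since $T_c$ is supported near $c$, it preserves this separation type, so the two types lie in distinct $T_c$-orbits. On the other hand $Z\cap M$ consists of four arcs joining the boundary components in one fixed pattern (say $a\text{--}b$, $d\text{--}e$, $a\text{--}e$, $b\text{--}d$), and a curve of the ``wrong'' separation type ($\{a,d\}\mid\{b,e\}$ here) is forced to cross all four of them, hence meets $Z$ --- and equally any $T_c^m(Z)$, which has the same arc-connection pattern --- in at least four points. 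Thus zipped flips with respect to systems of the form $T_c^m(Z)$ populate only \emph{one} of the two $T_c$-orbits of unzipped flips. Your argument is complete and correct for flips in that orbit, but the proposed ``local analysis'' cannot rescue the other: a flip of the second separation type is never a zipped flip for any $Z'=T_c^m(Z)$, so the gap lies in the statement itself rather than in your reasoning.
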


\begin{remark}
\label{adjacent zips}
In particular, Lemma~\ref{zip for flip} implies that if $P_0$ is a pants decomposition compatible with a zipper system $Z$
then after any sequence of flips $f_1,\dots, f_k$ we obtain a decomposition $P_k$ compatible with some zipper system $Z_k$.
Moreover, one can choose zipper systems $Z_1,\dots,Z_k$, $Z_0=Z$ so that $f_i$ is a  zipped flip with respect to $Z_i$
and $Z_{i+1}=T_{c_i}^{m_i}(Z_i)$  for the curve $c_i\in P_i$ changed by $f_i$.  

\end{remark}

A Dehn twist along a curve $c\in P$ is a composition of two flips (see  Fig.~\ref{dehn}).

\begin{figure}[!h]
\begin{center}
\psfrag{c}{\scriptsize $c$}
\epsfig{file=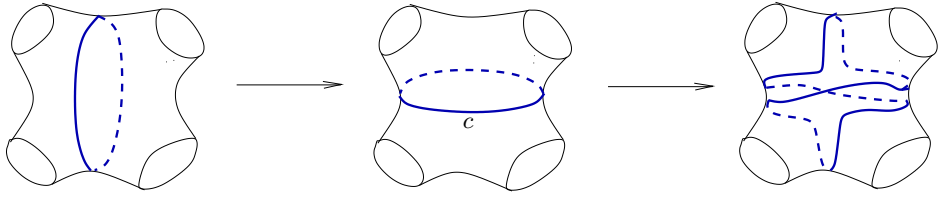,width=0.8\linewidth}
\caption{Dehn twist as a composition of two flips.} 
\label{dehn}
\end{center}
\end{figure}

\begin{definition}[{{\it Category of unzipped pants decompositions}}]
A category $\P_g(\L)$ for the given Lagrangian plane $\L$ is the following:

{\bf Objects:} pants decompositions  $P$ of a genus $g$ surface $S$ satisfying $\L(P)\in \L$;

{\bf Elementary morphisms:} 
unzipped flips.

Other {\bf morphisms} are compositions of elementary ones.

\end{definition}

\begin{example}
\label{S_2-orbit}
(An orbit of a pants decomposition of $S_2$.)
In this example we describe an orbit of an arbitrary  pants decomposition of a surface of genus 2.
The description is in terms of a graph $\Gamma$ where a vertex $v_P\in \Gamma$  correspond to a pants decompositions $P$ 
and an edge $e\in \Gamma$ connecting $v_P$ to $v_{P'}$  correspond to a flip $f$ such that $P'=f(P)$.
 
A pants decomposition $P$ of $S_2$ may be of one of two types:
\begin{itemize}
\item{(a)} ``non-self-folded'': $P$ consists of two non-self-folded pairs of pants (the pants decompositions of this type are
denoted by squares in Fig.~\ref{S2-orbit});

\item{(b)} ``self-folded'': $P$ consists of two handles glued along the holes (the pants decompositions of this type are
denoted by circles in Fig.~\ref{S2-orbit}).
 
\end{itemize}

For each of the two types of vertices of $\Gamma$  we need to understand how many edges are incident to the vertex.
In fact, the number of such edges is always infinite: if $c\in P$ is a regular curve, $T_c$ is a Dehn twist along $c$  
and $f(c)$ is a flip of the curve $c$ then
$T_c(f(c))$ is also a flip of $c$ (see Fig.~\ref{dehn}). On the other hand, Lemma~\ref{flip modulo twist}  states
that modulo Dehn twist $T_c$ there are exactly two possibilities for the flip $f(c)$.
Therefore, instead of $\Gamma$ we will draw the simplified graph $\overline \Gamma$ obtained from $\Gamma$ after 
factorizing by Dehn twists $T_c$ for each flipped curve $c$.
Then for each regular curve $c\in P$ there are exactly 2 edges emanating from vertex $v_P\in \overline \Gamma$. 
If $P$ is of non-self-folded type, there are 3 regular curves in $P$, so there are 6 edges incident to  $v_P\in \overline \Gamma$.
If $P$ is of self-folded type, there is a unique regular curve in $P$, so there are only two edges incident to  $v_P\in \overline \Gamma$.
The graph $\overline \Gamma$ is shown in Fig.~\ref{S2-orbit}.

We will say that a path $\gamma\in \overline \Gamma$ is {\it alternating} if any edge of $\gamma$ 
connects two vertices of different types.
It is easy to see that for each  path $\gamma\in \overline \Gamma$ there exists an alternating path $\gamma'\in \overline \Gamma$
with the same endpoints.
Indeed, each edge of $\overline \Gamma$ connecting two vertices of the same type (those are always ``square'' vertices)
may be substituted by an alternating path of two edges.
Since a Dehn twist along a curve of pants decompositions is a composition of two flips 
(each changing the type of a pants decomposition in case of $S_2$),
we obtain the same property for an arbitrary path in $\Gamma$:

\smallskip
{\it
 For each  path $\gamma\in \Gamma$ there exists an alternating path $\gamma'\in \Gamma$
with the same endpoints.
}

\end{example}

\begin{figure}[!h]
\begin{center}
\psfrag{1}{\scriptsize $1$}
\psfrag{2}{\scriptsize $2$}
\psfrag{3}{\scriptsize $3$}
\epsfig{file=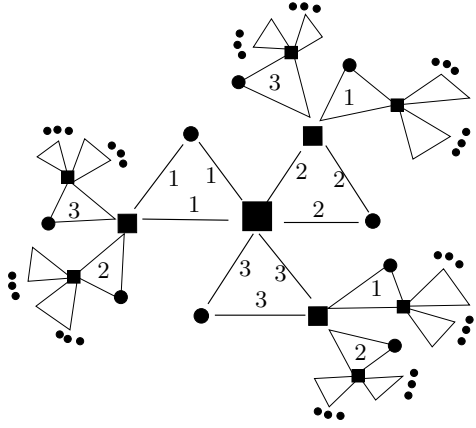,width=0.4\linewidth}
\caption{An orbit of a pants decomposition of $S_2$ (modulo Dehn twists). Vertices marked by squares and circle  correspond to 
non-self-folded and self-folded pants decompositions respectively. Edges correspond to flips.
The labels $1,2,3$ on the edges show which of the three curves of the decomposition is flipped along this edge.} 
\label{S2-orbit}
\end{center}
\end{figure}

Example~\ref{S_2-orbit} shows that the pants decompositions containing the curves separating handles play special role among other 
pants decompositions of a closed genus 2 surface. 

\begin{definition}[{{Standard pants decomposition}}]
A pants decomposition $P$ is {\it standard} if $P$ contains $g$ curves $c_1,\dots,c_g$ such that $c_i$ cuts out of $S$ a handle $\h_i$.

\end{definition}

\subsection{Pants decompositions and handlebodies}

A 3-dimensional {\it handlebody} is a 3-dimensional disk with several handles attached (where a  handle is a solid torus, and it is 
attached to a 3-disk along a 2-disk).

Given a pants decomposition $P$ one may construct a handlebody in the following way: for each pair of pants $P_i$ one considers a 
3-disk $D_i$ with three marked 2-disks on its boundary, then for each pair of adjacent (in $P$) pairs of pants $P_i$, $P_j$ one 
attaches $D_i$ to $D_j$ along the marked disks. One obtains a handlebody $H(P)$ whose boundary coincides with $S$ and carries the 
structure of the initial pants decomposition $P$.

The following proposition is evident:

\begin{prop}
Let $f$ be a flip of $P$. Then $H(f(P))=H(P)$.  

\end{prop}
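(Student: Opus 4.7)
The plan is to exhibit $H(f(P))$ as the same handlebody as $H(P)$ by finding, inside $H(P)$, an embedded compressing disk bounded by the new curve $c_i'$, and then showing that the two 3-balls produced by cutting along this new disk match the two 3-balls used in the construction of $H(f(P))$.

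First I would localize the situation. Let $c_i$ be the regular curve of $P$ that is flipped, let $p_1,p_2$ be the two pairs of pants of $P$ adjacent to $c_i$, and let $M=p_1\cup p_2$ be the resulting four-holed sphere, with boundary curves $a,b,c,d\in P$. In the construction of $H(P)$, the two 3-balls $D_1,D_2$ corresponding to $p_1,p_2$ are glued along the marked 2-disk associated to $c_i$, producing a region $B=D_1\cup_{c_i}D_2\subset H(P)$. Since a 3-ball glued to a 3-ball along a 2-disk is again a 3-ball, $B$ is a 3-ball, and its boundary $\partial B\cong S^2$ decomposes as $M$ together with the four "cap" disks for $a,b,c,d$.

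Next I would use the definition of an unzipped flip: $c_i'$ is disjoint from $c_j$ for $j\neq i$, so in particular $c_i'$ is disjoint from $a,b,c,d$, and hence $c_i'$ lies entirely inside $M\subset\partial B$. Thus $c_i'$ is a simple closed curve on the 2-sphere $\partial B$. By the Jordan curve theorem it bounds a disk on $\partial B$, which can be pushed slightly into the interior of $B$ to produce a properly embedded disk $D'\subset B$ with $\partial D'=c_i'$. Cutting $B$ along $D'$ yields two 3-balls $D_1',D_2'$.

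To finish, I would check that $D_1'$ and $D_2'$ are precisely the 3-balls used in assembling $H(f(P))$. The curve $c_i'$ cuts $M$ into two pairs of pants $p_1',p_2'$ (which together with the two adjacent caps form the two sides of $\partial B$ cut along $c_i'$), and these are exactly the two new pairs of pants produced by the flip. Hence each $D_k'$ is a 3-ball whose boundary contains the corresponding new pair of pants together with the two marked disks for its inherited boundary curves and one marked disk corresponding to $c_i'$; this matches $H(f(P))$ locally. Away from $B$, the decomposition of $H(P)$ into 3-balls $D_j$ for $j\neq 1,2$ is unaffected by the flip, and the gluings along the unchanged curves $a,b,c,d$ are identical in $H(P)$ and $H(f(P))$. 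Therefore $H(f(P))=H(P)$ as handlebodies bounded by $S$.

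The only real obstacle is the verification that the pushed-in disk $D'$ can be chosen embedded and disjoint from the gluing disk for $c_i$, but this is automatic: the disk on $\partial B$ bounded by $c_i'$ meets the gluing locus only along the two transverse points of $c_i'\cap c_i$, and a small interior push of the disk removes these intersections, yielding a properly embedded compressing disk in $B$.
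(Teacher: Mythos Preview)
Your argument is correct. The paper itself gives no proof of this proposition—it is introduced with the phrase ``The following proposition is evident''—so there is nothing to compare against; you have simply written out what the authors left to the reader. The localization to the 3-ball $B=D_1\cup_{c_i}D_2$, the observation that $c_i'$ lies on the 2-sphere $\partial B$ and hence bounds an embedded disk in $B$, and the re-cutting of $B$ along that disk is exactly the natural unpacking of ``evident.''

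Two minor remarks. First, your final paragraph is unnecessary: you only need $D'$ to be a properly embedded disk in $B$ with boundary $c_i'$, and that is immediate once $c_i'$ sits on the boundary 2-sphere of the 3-ball $B$; whether or how $D'$ meets the old disk for $c_i$ is irrelevant, since you are re-cutting $B$ from scratch, not comparing the two cuts simultaneously. Second, there is an edge case you (and the paper, in its neighbouring arguments such as Lemma~\ref{flip modulo twist}) pass over in silence: if $c_i$ happens to be the boundary curve of a handle in $P$, then one of $p_1,p_2$ is self-folded, $M$ is a genus-one surface with two holes rather than a four-holed sphere, and $B$ is a solid torus rather than a 3-ball. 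The conclusion still holds—first cut $B$ along the meridian disk bounded by the non-regular curve $c$ inside the handle, reducing to a 3-ball, and then run your argument—but the Jordan-curve step as written needs this extra sentence.
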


\begin{prop}[A.~Hatcher,~\cite{Ha-priv}]
\label{trans on one plane}
{\it Morphisms of  $\P_g(\L)$ do not act transitively an the objects of the same category}.

\end{prop}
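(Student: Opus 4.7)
My plan is to leverage the observation already recorded in this subsection, that the handlebody $H(P)$ is invariant under flips, and to exhibit two pants decompositions realising the same Lagrangian plane but distinct handlebodies rel $S$. Since $\L(P)$ is recovered from $H(P)$ as $\L(P)=\ker\bigl(H_1(S;\Z)\to H_1(H(P);\Z)\bigr)$, the handlebody rel $S$ is a flip-invariant at least as fine as the Lagrangian; so such a pair of pants decompositions will lie in $\P_g(\L)$ for their common $\L$ and yet in distinct flip-orbits, proving the non-transitivity.

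I would produce the pair by a Torelli twist. Fix any $P_0$ with $\L(P_0)=\L$, and choose a separating simple closed curve $\gamma\subset S$ that is not isotopic to the boundary of any compressing disk in $H(P_0)$. For $g\ge 2$ such a $\gamma$ is readily found: for instance, take a commutator $[\alpha,\beta]$ where $\alpha$ is a meridian and $\beta$ a dual longitude of $H(P_0)$, so the resulting $\gamma$ is separating and nullhomologous in $S$ but not nullhomotopic in $H(P_0)$. Set $P_1=T_\gamma(P_0)$. Because $\gamma$ is separating, $[\gamma]=0$ in $H_1(S;\Z)$, hence the Dehn twist $T_\gamma$ acts trivially on $H_1(S;\Z)$ and preserves every Lagrangian; so $\L(P_1)=\L$ and $P_1$ is indeed an object of $\P_g(\L)$.

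It remains to check $H(P_1)\neq H(P_0)$ rel $S$. I would invoke two classical facts: (i) any two meridian systems of a handlebody $H$ are related by some element of the handlebody subgroup $MCG(H)\subset MCG(\partial H)$ (a Waldhausen-type rigidity result), and (ii) the handlebody subgroup contains a Dehn twist $T_\gamma$ precisely when $\gamma$ bounds a compressing disk in $H$. Together they show that $H(T_\gamma(P_0))=H(P_0)$ rel $S$ if and only if $T_\gamma$ extends over $H(P_0)$. By our choice of $\gamma$ it does not, so $H(P_1)\neq H(P_0)$ and no composition of flips joins $P_0$ to $P_1$. The main delicate step is the extension criterion (ii) — standard but non-trivial — once this is in hand the rest is bookkeeping.
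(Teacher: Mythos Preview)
Your overall strategy --- produce two pants decompositions with the same Lagrangian but different handlebodies rel $S$ --- is sound, and it is the same invariant the paper uses. However, your concrete choice of $\gamma$ fails. If $\alpha$ is a meridian of $H(P_0)$ and $\beta$ a dual longitude, then in $\pi_1(H(P_0))$ one has $[\alpha]=1$, so $[\alpha,\beta]=1$; by Dehn's Lemma the curve $\gamma=[\alpha,\beta]$ therefore \emph{does} bound an embedded disk in $H(P_0)$ (indeed it is the standard separating disk of the handlebody). So the Torelli twist $T_\gamma$ you build extends over $H(P_0)$ and $H(P_1)=H(P_0)$. The argument is repaired by picking $\gamma$ bounding a once-punctured torus $T\subset S$ that contains \emph{no} meridian of $H(P_0)$ --- e.g.\ a neighbourhood of two curves whose homology classes are independent longitudes of $H(P_0)$ --- but you must actually exhibit such a $\gamma$, and then you still need McCullough's extension criterion (your item~(ii)), which is a genuine theorem rather than a bookkeeping step.

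The paper's proof bypasses all of this. It simply embeds $S$ in $\R^3$ with $P$ consisting of curves bounding disks in the inner handlebody, observes (as you do) that flips preserve this property, and then draws a single explicit curve $c$ with $h(c)=h(c_2)\in\L(P)$ that is visibly linked with $c_3$ and hence does not bound a disk. Any pants decomposition $P'\in\P_g(\L)$ containing $c$ then lies outside the flip-orbit of $P$. This avoids both the Torelli detour and the appeal to the handlebody-extension criterion; your route, once fixed, is more systematic but substantially heavier.
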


\begin{proof}
Suppose that the surface $S$ is embedded in $\R^3$ and a pants decomposition $P$ is such that each curve of $P$ is
contractible inside the inner handlebody defined by $S\subset \R^3$. Then each flip preserves this property of $P$.
On the other hand there exists a pants decomposition $P'\in \L(P)$ with non-contractible (inside the given handlebody) curves
(see Fig.~\ref{non-contractible} for a non-contractible curve $c$ such that $h(c)\in \L(P)$). 

\end{proof}

\begin{figure}[!h]
\begin{center}
\psfrag{1}{\scriptsize $c_1$}
\psfrag{2}{\scriptsize $c_2$}
\psfrag{3}{\scriptsize $c_3$}
\psfrag{c}{\scriptsize $c$}
\epsfig{file=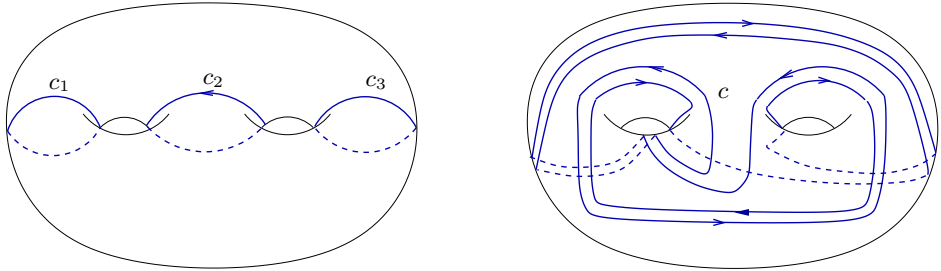,width=0.8\linewidth}
\caption{Pants decomposition $P=\{ c_1,c_2,c_3 \}$ and a curve $c$ such that $h(c)=h(c_2)$ and $c$ 
is not contractible inside the inner handlebody (the curve $c$ is linked non-trivially with $c_3$).} 
\label{non-contractible}
\end{center}
\end{figure}

\begin{prop}[F.~Luo,~\cite{luo}]
\label{handlebody}
Flips do act transitively on the pants decompositions whose curves are contractible in a given handlebody of a surface $S\subset \R^3$.

\end{prop}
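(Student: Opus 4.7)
The plan is to proceed by induction on the genus $g$, with the base case $g=2$ obtained by restricting the orbit graph of Example~\ref{S_2-orbit} to the subset of pants decompositions whose curves bound disks in the fixed handlebody $H\subset\R^3$. For genus $2$ the combinatorial analysis of Figure~\ref{S2-orbit} shows only finitely many classes of $H$-contractible decompositions up to Dehn twists, and their connectivity under flips within the subset can be verified by direct inspection; the point is that at each vertex of $\overline\Gamma$ corresponding to an $H$-contractible pants decomposition, at least one of the two flip edges emanating from it stays inside the $H$-contractible subset, and this is enough to connect them all.

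For the inductive step, given two pants decompositions $P,P'$ of $S_g$ both $H$-contractible, the strategy is two-fold: first arrange, via a sequence of flips staying in the $H$-contractible class, that $P$ and $P'$ share a common curve; then cut along it and reduce to smaller genus (passing through surfaces with marked disks, as formalized in Section~\ref{s open}, and invoking the induction hypothesis). To find the common curve I would pick a non-separating curve $c'\in P'$, which by Dehn's lemma bounds an embedded meridian disk $D'\subset H$, and likewise realize each $c_i\in P$ as $\partial D_i$ for disks $D_i\subset H$. Putting $\bigcup D_i$ and $D'$ in general position and focusing on arcs of $D'\cap \bigcup D_i$, an innermost-arc/innermost-disk surgery produces a new meridian of $H$ whose boundary is a curve disjoint from all $c_j$ with $j\neq i$ and which meets $c_i$ in exactly two points; this is precisely an $H$-contractible flip of $P$, and it strictly decreases the total intersection number with $c'$. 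Iterating, we reach a position where some curve of $P$ is isotopic (hence equal) to $c'$.

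Once $P$ and $P'$ share a curve $c$, cut $S$ along $c$ and $H$ along the bounded disk. If $c$ is non-separating the result is a genus $g-1$ surface with two marked disks bounding a smaller handlebody; if $c$ is separating one gets two such pieces. In either case the remaining curves of $P$ and $P'$ restrict to $H$-contractible pants decompositions on the cut surfaces, and the induction hypothesis (together with the extension of the theory to marked disks from Section~\ref{s open}) finishes the argument. The main obstacle is the disk-surgery step: one must verify that each innermost-disk exchange corresponds to a genuine flip of a pants decomposition, i.e.\ modifies exactly one curve $c_i$, preserves disjointness from the other $c_j$, produces a curve with $|c_i'\cap c_i|=2$, and keeps the new curve $H$-contractible. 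This bookkeeping is precisely the combinatorial content of Luo's argument in~\cite{luo}, and the only delicate point is to ensure the innermost surgeries can be ordered so that no intermediate decomposition ever leaves the $H$-contractible subset.
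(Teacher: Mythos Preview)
The paper does not prove this proposition at all: it is stated as a result of Luo and cited to~\cite{luo} without argument, then immediately followed by Corollary~\ref{handle}. So there is no ``paper's own proof'' to compare against; the proposition functions purely as an imported black box.

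Your outline is in the right spirit (reduce intersections between two meridian-type systems by disk surgeries, then cut and induct), but as written it has a genuine gap in the surgery step. You claim that an innermost-arc surgery on $D'\cap\bigcup D_i$ ``produces a new meridian of $H$ whose boundary is a curve disjoint from all $c_j$ with $j\ne i$ and which meets $c_i$ in exactly two points''. That is not what innermost surgery does. Surgering $D'$ along a subdisk of some $D_i$ modifies $c'=\partial D'$, not $c_i$; the resulting curve need not be disjoint from the other $c_j$, need not meet $c_i$ in two points, and certainly need not arise from a flip of the fixed pants decomposition $P$. Conversely, a flip of $P$ replaces $c_i$ by some $c_i'$ with $|c_i'\cap c_i|=2$; there is no a~priori reason such a $c_i'$ bounds a disk in $H$, nor that it reduces $|P\cap c'|$. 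Bridging these two operations is exactly the content one has to supply, and you have not supplied it---you explicitly defer it by writing ``this bookkeeping is precisely the combinatorial content of Luo's argument in~\cite{luo}'', which makes the argument circular.

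The base case is also not established: saying that connectivity ``can be verified by direct inspection'' of Figure~\ref{S2-orbit} is not a verification, and the assertion that at each $H$-contractible vertex at least one outgoing flip stays $H$-contractible is unproved (and would in any case need to be upgraded to actual connectivity, not mere non-isolation). If you want a self-contained proof, you need to make the disk-to-flip correspondence precise; otherwise the honest thing is to do what the paper does and simply cite~\cite{luo}.
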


\begin{cor}
\label{handle}
Flip-equivalence classes of pants decompositions of a given surface are in one-to-one correspondence with handlebodies 
defined by pants decompositions.

\end{cor}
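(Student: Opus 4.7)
The plan is to show that the map sending a flip-equivalence class $[P]$ to the handlebody $H(P)$ is a well-defined bijection between the two sets in question. The three verifications (well-defined, surjective, injective) each follow directly from results already established just above.

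First I would verify that $[P] \mapsto H(P)$ is well-defined. The proposition stating that $H(f(P)) = H(P)$ for every flip $f$ shows that $H$ is invariant under a single flip; by induction on the length of a flip sequence, $H$ is constant on every flip-equivalence class. Surjectivity is built into the construction: every handlebody $H$ with $\partial H = S$ admits a meridian system that extends to a pants decomposition $P$ of $S$ whose curves all bound disks in $H$, and for this $P$ one has $H(P) = H$.

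The remaining point is injectivity, and this is where Luo's proposition does all the work. Suppose $H(P) = H(P')$. Then every curve of $P$ and every curve of $P'$ bounds a disk inside this common handlebody, i.e., is contractible there. Proposition~\ref{handlebody} asserts exactly that flips act transitively on the set of pants decompositions whose curves are contractible in a given handlebody, so there is a sequence of flips carrying $P$ to $P'$; that is, $[P] = [P']$.

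I do not expect a genuine obstacle: the corollary is essentially a packaging of the preceding proposition (flips preserve $H(P)$) together with Luo's transitivity proposition. The only step that deserves a line of explicit justification is the surjectivity claim, which one could phrase by noting that any handlebody of genus $g$ has a complete system of $g$ meridian disks whose boundaries on $S$ can be completed to a set of $3g-3$ disjoint simple closed curves decomposing $S$ into pairs of pants, and this resulting pants decomposition realizes the given handlebody up to homeomorphism rel $S$.
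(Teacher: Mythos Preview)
Your proposal is correct and matches the intended argument: the paper states this as an immediate corollary with no proof, since well-definedness follows from the proposition $H(f(P))=H(P)$ and injectivity from Luo's transitivity result (Proposition~\ref{handlebody}). Note that with the phrasing ``handlebodies defined by pants decompositions'' surjectivity is essentially tautological, so your extra justification via meridian systems, while correct, is not strictly needed.
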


\section{Double pants decompositions}
\label{secDP}

\subsection{Double pants decompositions and flip-twist groupoid}

\begin{definition}[{{\it Lagrangian planes in general position}}]
Two {\it  Lagrangian planes $\L_1$ and $\L_2$  are in general position} if 
$H_1(S,\Z)=\langle \L_1,\L_2\rangle$.

\end{definition}

See Fig.~\ref{double_pant} for an example of two pants decompositions spanning a pair of Lagrangian planes in general position.

\begin{figure}[!h]
\begin{center}
\psfrag{P}{$P_a$}
\psfrag{Z}{$P_b$}
\epsfig{file=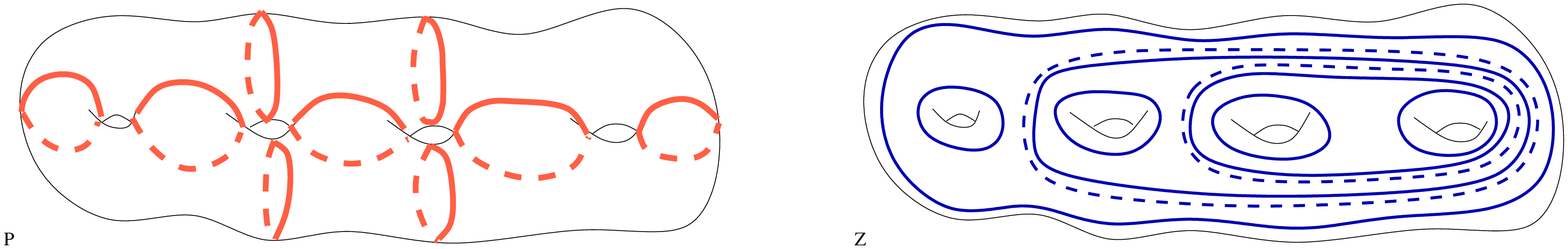,width=0.98\linewidth}
\caption{Pair of pants decompositions $(P_a,P_b)$. } 
\label{double_pant}
\end{center}
\end{figure}

\begin{definition}[{{\it Double pants decomposition}}]
A {\it double pants decomposition} $DP=(P_a,P_b)$ is a pair of pants decompositions $P_a$ and $P_b$ of the same surface
such that the Lagrangian planes $\L_a=\L(P_a)$ and  $\L_b=\L(P_b)$ spanned by these pants decompositions
are in general position.

\end{definition}

\begin{definition}[{{\it Handle twists}}]
\label{Dehn}
A {\it Handle twist} is a transformation of a double pants decomposition $DP=(P_a,P_b)$ 
which may be performed if $P_a$ and $P_b$ contain the same  curve $a_i=b_i$  
separating the same handle $\h$, 
see Fig.~\ref{d-self-pant}. Let $a\in \h$ and $b\in \h$ be the only curves of $P_a$ and $P_b$ contained in $\h$.
Then a {\it handle twist in $\h$}   is a Dehn twist along $a$ or along $b$ in any of two directions
(see Fig.~\ref{d-self-pant}(b)).  

\end{definition}

\begin{figure}[!h]
\begin{center}
\psfrag{a}{\scriptsize $a$}
\psfrag{b}{\scriptsize $b$}
\psfrag{a1}{\scriptsize $a'$}
\psfrag{b1}{\scriptsize $b'$}
\psfrag{a2}{\scriptsize $a''$}
\psfrag{b2}{\scriptsize $b''$}
\psfrag{aa}{\small (a)}
\psfrag{bb}{\small (b)}
\psfrag{cc}{\small (c)}
\psfrag{i}{\scriptsize $a_i=b_i$}
\epsfig{file=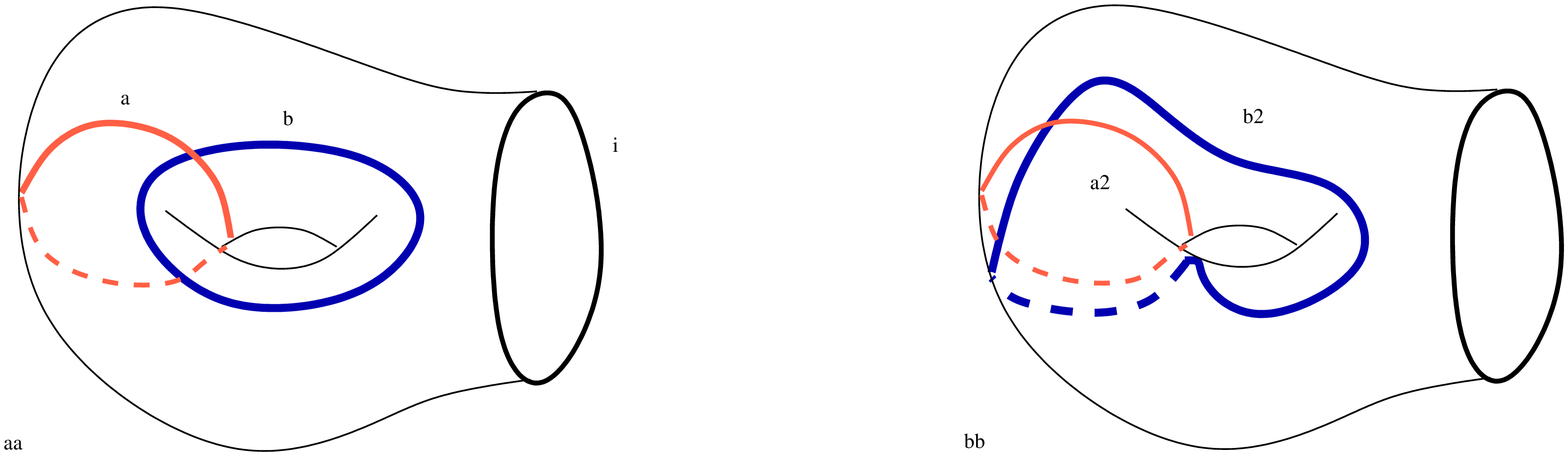,width=0.59\linewidth}
\caption{Handle twists:
(a) handle (double self-folded pair of pants); (b) the same handle after one of the four possible handle twists.} 
\label{d-self-pant}
\end{center}
\end{figure}

\begin{definition}[{{\it Category of double pants decompositions}}]
\label{def-double}
A category $\DP_{g,0}$ of {\it double pants decompositions} of a genus $g$ surface $S=S_{g,0}$ is the following:

{\bf Objects:} double pants decompositions $DP=(P_a,P_b)$ of $S$.

{\bf Elementary morphisms:} 
\begin{itemize}
\item unzipped flips of $P_i$ ($i\in \{ a,b\}$); 



\item handle twists. 

\end{itemize}

Other {\bf morphisms} are compositions of elementary ones.

\end{definition}

\begin{remark}
The index ``$g,0$'' in the notation $\DP_{g,0}$ is to underline that the objects of this category 
are double pants decompositions of surfaces of genus $g$ 
{\it without} marked points. 

\end{remark}

%

\begin{definition}[{{\it $\DP$-equivalence}}]
Two double pants decompositions are called $\DP$-equivalent if there exists a morphism of $\DP_{g,0}$ taking one of them to the other.

\end{definition}

\begin{definition}[{{\it Flip-twist groupoid}}]
All morphisms of $\DP_{g,0}$ are reversible, so the morphisms form a groupoid acting on the objects of $\DP_{g,0}$. 
We will call it a {\it flip-twist groupoid} and denote $FT$.

\end{definition}

\subsection{Admissible double pants decompositions}

\begin{definition}[{{\it Standard double pants decomposition, principle curves}}]
A double pants decomposition $(P_a,P_b)$ is {\it standard} if 
there exist $g$ curves $c_1,\dots,c_g$ such that
the following two conditions hold:
\begin{itemize}
\item $c_i\in P_a\cap P_b$;
\item $c_i$ cuts out of $S$ a handle $\h_i$. 
\end{itemize}

The set of curves  $\{c_1,\dots,c_g\}$ in this case is the {\it set of principle curves} of $(P_a,P_b)$. 
\end{definition}

See Fig.~\ref{standard double_pant} for an example of a standard double pants decomposition.
Notice that a standard double pants decomposition is not unique: many elements of the mapping class group of $S$ act non-trivially 
on the decomposition shown in  Fig.~\ref{standard double_pant}, moreover applying a flip to the curve in a middle of $P_a$ one
may obtain a decomposition which is combinatorially different from one shown in the figure (in the former there are $5$ curve contained
in $P_a\cap P_b$, while in the latter there are $4$ ones).

\begin{figure}[!h]
\begin{center}
\psfrag{a}{$P_a$}
\psfrag{b}{$P_b$}
\epsfig{file=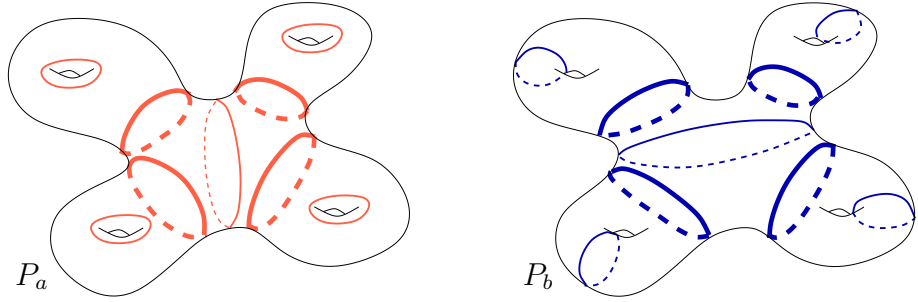,width=0.78\linewidth}
\caption{Standard double pants decomposition $(P_a,P_b)$. The principle curves are shown by bold lines. } 
\label{standard double_pant}
\end{center}
\end{figure}

%

\begin{definition}[{{\it Admissible decomposition}}]
\label{def admiss}
A double pants decomposition $(P_a,P_b)$ is {\it admissible}  if 
there exists a standard double pants decomposition
$(P'_a,P'_b)$ such that $P'_a$ is flip-equivalent to $P_a$ and $P_b'$ is flip-equivalent to $P_b$.

\end{definition} 

\begin{example}
It is easy to check that a double pants decomposition $(P_a,P_b)$ shown in Fig.~\ref{double_pant} is admissible.

\end{example}

\begin{lemma}
\label{ht-adm}
Let $DP$ be an admissible double pants decomposition and $t$ be a handle twist of $DP$. Then $t(DP)$ is also admissible.

\end{lemma}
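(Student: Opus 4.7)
The plan is to argue directly that if $(P_a',P_b')$ is a standard double pants decomposition flip-equivalent (componentwise) to $(P_a,P_b)$, as supplied by Definition~\ref{def admiss}, then its image under the relevant Dehn twist serves as a standard witness for $t(DP)$. By symmetry, I assume $t$ is a handle twist along the curve $a\in P_a$ lying in the handle $\h$ bounded by $a_i=b_i$; so $t(DP)=(P_a, T_a(P_b))$, where $T_a$ denotes a Dehn twist along $a$ (the case of a twist along $b$ is the same argument with $a$ and $b$ swapped, and the direction of the twist is irrelevant).

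First I would observe that since $T_a$ is supported in a neighborhood of $a\subset\h$, it fixes every curve of $P_a$ and every curve of $P_b$ except $b$, so the pants decomposition produced by $t$ is indeed the set-theoretic image $T_a(P_b)$. Next, since $T_a$ is a diffeomorphism of $S$ and the conditions on a flip (regularity of the flipped curve, the intersection pattern $|c\cap c'|=2$, disjointness from the remaining curves) are diffeomorphism-invariant, applying $T_a$ to each intermediate decomposition in the flip chain $P_b\sim_{\mathrm{flip}} P_b'$ yields a flip chain $T_a(P_b)\sim_{\mathrm{flip}} T_a(P_b')$.

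The main step is showing $P_a\sim_{\mathrm{flip}} T_a(P_a')$, and this is where the genuine content lies. Since $P_a\sim_{\mathrm{flip}} P_a'$, Corollary~\ref{handle} gives $H(P_a)=H(P_a')$. Because $a\in P_a$, the curve $a$ bounds a meridian disk in $H(P_a)=H(P_a')$. The classical fact that a Dehn twist along a curve bounding a disk in a handlebody extends over the handlebody (as the identity outside a collar of that disk) provides a self-homeomorphism $\widetilde T_a$ of $H(P_a')$ whose boundary restriction is $T_a$. Pushing forward the meridian disks of $P_a'$ by $\widetilde T_a$ exhibits disks in $H(P_a')$ bounded by the curves of $T_a(P_a')$, so $H(T_a(P_a'))=H(P_a')$. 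Proposition~\ref{handlebody} (Luo's theorem) now yields $P_a'\sim_{\mathrm{flip}} T_a(P_a')$, and by transitivity $P_a\sim_{\mathrm{flip}} T_a(P_a')$.

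Finally, $(T_a(P_a'),T_a(P_b'))$ is the image of the standard pair $(P_a',P_b')$ under the diffeomorphism $T_a$: the curves $T_a(c_1),\ldots,T_a(c_g)$ lie in $T_a(P_a')\cap T_a(P_b')$ and each still cuts out a handle of $S$, so this pair is standard. Combined with the two flip-equivalences established above, this shows $t(DP)$ is admissible. The one nontrivial ingredient is the extension of $T_a$ across $H(P_a')$; everything else is formal manipulation of Corollary~\ref{handle} and Proposition~\ref{handlebody}.
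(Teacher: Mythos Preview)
Your argument is correct, but the detour through handlebodies and Luo's theorem in your ``main step'' is unnecessary. You already observed that $T_a$ fixes every curve of $P_a$, i.e.\ $T_a(P_a)=P_a$, and you already used the conjugation argument to pass from $P_b\sim_{\mathrm{flip}} P_b'$ to $T_a(P_b)\sim_{\mathrm{flip}} T_a(P_b')$. The \emph{same} conjugation argument, applied to the flip chain $P_a\sim_{\mathrm{flip}} P_a'$, gives $T_a(P_a)\sim_{\mathrm{flip}} T_a(P_a')$, which is exactly $P_a\sim_{\mathrm{flip}} T_a(P_a')$. No appeal to Proposition~\ref{handlebody} or to the extension of $T_a$ across the handlebody is needed.

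This is precisely how the paper proceeds: writing $\bar t$ for the Dehn twist of $S$ realizing $t$, one has $\bar t(DP)=t(DP)$ because $\bar t$ fixes the component of the double decomposition that contains the twist curve. If $f_1,\dots,f_k$ are flips taking $DP$ to a standard $DP'$, then the conjugates $f_i'=\bar t f_i\bar t^{-1}$ are flips taking $t(DP)=\bar t(DP)$ to $\bar t(DP')$, which is standard because it is the diffeomorphic image of a standard decomposition. Your proof unpacks this componentwise and then, for the $P_a$-component, replaces a one-line conjugation by a three-dimensional argument; that argument is valid but buys nothing extra.
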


\begin{proof}
Denote by $\bar t$ the Dehn twist of $S$, that acts on $DP$ as the handle twist $t$. Notice that applying $\bar t$ to a double 
pants decomposition we obtain the decomposition with the same combinatorial properties, in particular,  $\bar t$ takes flippable curves to 
flippable. Moreover, for each double pants decomposition $DP_0$ if $f$ is a flip of a curve $c\in DP_0$ then 
$f'=\bar t f \bar t^{-1}$ is a flip of the curve $\bar t(c)\in \bar t(DP_0)$
and $\bar tf\bar t^{-1}(\bar t(DP_0))= \bar t(f(DP_0))$.

Suppose now that $f_1,\dots,f_k$ is a sequence of flips taking $DP$ to a standard decomposition $DP'$.
Then the sequence $f_1',\dots,f_k'$, $f_i'=\bar tf_i\bar t^{-1}$ of flips takes $t(DP)$ to a standard decomposition $\bar t(DP')$.

\end{proof}

The set of admissible double pants decompositions is closed under the action of flips and handle twists, 
so we may define a subcategory of $\DP_{g,0}$:

\begin{definition}[{{\it Category of admissible double pants decompositions}}]
\label{def-adm-double}
A category $\ADP_{g,0}$ of {\it admissible double pants decompositions} of a genus $g$ surface  is the following:

{\bf Objects:} admissible double pants decompositions $DP=(P_a,P_b)$ of a genus $g$ surface.

{\bf Elementary morphisms:} 
\begin{itemize}
\item 
unzipped flips of $P_i$ ($i\in \{a,b\}$); 
\item 
handle twists. 

\end{itemize}

Other {\bf morphisms} are compositions of elementary ones.

\end{definition}

\subsection{Admissible decompositions as Heegaard splittings of $\S^3$}

Given a pants decomposition $P_a$ on $S$, one can define a handlebody $S_+$ such that all curves of $P_a$ are contractible in $S_+$.
A choice of two pants decompositions $P_a$ and $P_b$ gives  two handlebodies, which could be attached along $S$, 
so that we obtain a Heegaard splitting of a 3-manifold denoted by $M(P_a,P_b)$. 

\begin{lemma} 
\label{st is S}
If $(P_a,P_b)$ is a standard double pants decomposition then   $M(P_a,P_b)=\S^3$.

\end{lemma}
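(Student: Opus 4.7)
The plan is to recognize a standard double pants decomposition as yielding the standard genus-$g$ Heegaard diagram of $\S^3$. Let $H_a=H(P_a)$ and $H_b=H(P_b)$ be the two handlebodies. First, I would identify complete meridian systems in each. Inside the handle $\h_i$, the pants decomposition $P_a$ contains exactly one non-regular curve $a_i$; by the explicit ball-gluing construction of $H(P_a)$, the portion of $H_a$ sitting over $\h_i$ is a solid torus whose meridian is $a_i$. Cutting $H_a$ along the meridian disks bounded by $a_1,\dots,a_g$ removes all handles and leaves the handlebody over the $g$-holed sphere $S_0:=S\setminus\bigsqcup_i \h_i$, which is a ball since $S_0$ has genus $0$. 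Hence $\{a_1,\dots,a_g\}$ is a complete system of meridians for $H_a$, and symmetrically $\{b_1,\dots,b_g\}$ is one for $H_b$.

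Next I would verify the intersection pattern $|a_i \cap b_j| = \delta_{ij}$. For $i\neq j$ the curves $a_i$ and $b_j$ lie in disjoint handles and hence are disjoint on $S$. For $i=j$, I would first observe that every curve of $P_a$ other than $a_1,\ldots,a_g$ is null-homologous in $S$: each principal curve $c_i$ is separating, and any simple closed curve inside the $g$-holed sphere $S_0$ is homologous to a sum of the $c_i$'s. Thus $\L_a=\langle [a_1],\dots,[a_g]\rangle$ and likewise $\L_b=\langle [b_1],\dots,[b_g]\rangle$. Writing $H_1(S,\Z)=\bigoplus_i V_i$ where $V_i\cong\Z^2$ is the homological contribution of the handle $\h_i$, the general-position hypothesis $\L_a+\L_b=H_1(S,\Z)$ forces $[a_i],[b_i]$ to generate $V_i$ over $\Z$. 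Since the intersection form on $V_i$ is the standard symplectic form, this integrality means $[a_i]\cdot[b_i]=\pm 1$, which for simple closed curves on a torus with minimal intersection translates to the geometric equality $|a_i\cap b_i|=1$.

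Finally, a Heegaard diagram with $|a_i\cap b_j|=\delta_{ij}$ is the standard Heegaard diagram of $\S^3$, so $M(P_a,P_b)=\S^3$. A self-contained version of this last step cuts $M$ along the spheres $\Sigma_i=D_i^a\cup_{c_i}D_i^b$, where $D_i^a\subset H_a$ and $D_i^b\subset H_b$ are the disks bounded by the shared principal curve $c_i$: the central piece is two balls glued along the $g$-holed sphere $S_0$, which is homeomorphic to $\S^3$ with $g$ open balls removed; each handle piece consists of two solid tori glued along a once-punctured torus whose meridians meet once, so that capping its spherical boundary produces the genus-$1$ Heegaard splitting of $\S^3$ and the uncapped piece is therefore a ball; reassembling produces $\S^3$. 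The main technical step I expect to be the obstacle is the translation between the algebraic general-position condition on $\L_a,\L_b$ and the geometric equality $|a_i\cap b_i|=1$, which requires pinpointing which curves contribute to the Lagrangian planes and identifying intersection numbers on $H_1(\h_i)$ with the symplectic pairing on the summand $V_i$.
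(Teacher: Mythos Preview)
Your proposal is correct and shares the paper's core strategy: decompose $S$ into the $g$ handles $\h_i$ and the $g$-holed sphere $S_0$, use the general-position hypothesis to deduce $|a_i\cap b_i|=1$ in each handle, and then assemble these local pictures into the conclusion $M(P_a,P_b)=\S^3$. Your derivation of $|a_i\cap b_i|=1$ is in fact more detailed than the paper's, which simply asserts that $\langle h(a_i),h(b_i)\rangle=H_1(\h_i,\Z)$ implies a single intersection point.

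The difference lies in the final assembly step. The paper constructs an explicit embedding $S\hookrightarrow\S^3$: each handle $\h_i$ is embedded so that $a_i$ bounds a disk on the inside and $b_i$ on the outside, the handles are placed disjointly, and the $g$-holed sphere is attached (its curves being null-homologous, hence contractible on both sides). You instead phrase the conclusion in Heegaard-diagram language, exhibiting $\{a_i\}$ and $\{b_i\}$ as complete meridian systems with intersection matrix the identity, and then either invoke the standard recognition of this diagram or cut $M$ along the $2$-spheres $D_i^a\cup_{c_i}D_i^b$. Your sphere-cutting argument is essentially the inverse of the paper's embedding construction, so the two routes are dual rather than genuinely different; your formulation has the advantage of invoking a named result from $3$-manifold topology, while the paper's has the advantage of being entirely self-contained.
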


\begin{proof}
We consider the surface $S=S_{g,0}$ with a standard pants decomposition as a union of $g$ handles and a $g$-holed sphere.
Notice that on each of these surfaces the homology classes of the curves  $c\in P_a\cup P_b$ generate entire homology lattice.

First, we consider a handle (a one holed torus) $\h_i$  with double pants decomposition $DP=(P_a,P_b)\cap \h_i$. 
In this case $DP$ consists only of two 
curves $a_i$ and $b_i$. Since $\langle h(a_i),h(b_i)\rangle=H_1(\h,\Z)$,  $a_i$ intersects $b_i$ at a unique point. 
It is easy to see that
$\h$ may be embedded in $\S^3$ so that $a_i$ is contractible inside the inner solid torus (bounded by $ \h$ and a disk attached
to $\partial \h$ ) and $b_i$ is contractible outside one: 
to see that first we embed $a_i$ correctly and then apply several twists along $a_i$ to ``unwrap'' $b_i$.

Now, we embed each handle $\h_1,\dots,\h_g$ in the same $\S^3$ so that all images are disjoined. 
Then we attach the sphere $S_{0,g}$.
Since all curves on $S_{0,g}$ are homologically trivial, they are contractible both inside and outside.
So, we constructed an embedding of $S$ to $\S^3$ 
  such that all curves of $P_a$ and $P_b$ are contractible in inner and outer handlebodies respectively,
which is equivalent to $M(P_a,P_b)=\S^3$.

\end{proof}

\begin{lemma}
\label{S is adm}
If $M(P_a,P_b)=\S^3$ then $(P_a,P_b)$ is admissible. 

\end{lemma}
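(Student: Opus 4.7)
The plan is to invoke Waldhausen's uniqueness theorem for Heegaard splittings of $\S^3$, together with Luo's transitivity result (Proposition~\ref{handlebody}). Since $M(P_a,P_b)=\S^3$, the pair $(P_a,P_b)$ realizes a genus-$g$ Heegaard splitting $\S^3=S_+\cup_S S_-$, where $P_a$ consists of meridians of $S_+$ and $P_b$ of meridians of $S_-$. By Waldhausen's theorem, every genus-$g$ Heegaard splitting of $\S^3$ is isotopic to the standard one; in particular there is an orientation-preserving homeomorphism $\varphi\colon \S^3\to\S^3$ that carries $(S,S_+,S_-)$ to the standard splitting $(S^{st},S_+^{st},S_-^{st})$ built in the proof of Lemma~\ref{st is S}.

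Next, I would take the explicit standard double pants decomposition $(P_a^{st},P_b^{st})$ of $S^{st}$ constructed in Lemma~\ref{st is S}, whose principle curves cut $S^{st}$ into $g$ handles $\h_i^{st}$, with $P_a^{st}$ contractible in $S_+^{st}$ and $P_b^{st}$ contractible in $S_-^{st}$. Set $P_a'=\varphi^{-1}(P_a^{st})$ and $P_b'=\varphi^{-1}(P_b^{st})$. Standardness is preserved by $\varphi^{-1}$ (it is a purely topological property of the configuration of curves on the surface), so $(P_a',P_b')$ is a standard double pants decomposition of $S$. Moreover, the curves of $P_a'$ are contractible in $S_+$ and those of $P_b'$ in $S_-$, because $\varphi^{-1}$ sends $S_\pm^{st}$ to $S_\pm$.

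Now I apply Proposition~\ref{handlebody}: flips act transitively on pants decompositions whose curves are contractible in a fixed handlebody. Both $P_a$ and $P_a'$ consist of curves contractible in $S_+$, so $P_a$ and $P_a'$ are flip-equivalent; similarly $P_b$ and $P_b'$ are flip-equivalent. By Definition~\ref{def admiss} this means $(P_a,P_b)$ is admissible.

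The main obstacle is really the appeal to Waldhausen's theorem, which is a deep classical result and the only non-elementary ingredient; everything else is bookkeeping. A minor point to verify is that $\varphi$ matches the two handlebodies in the correct pairing (rather than swapping them), which one can arrange because the roles of $P_a$ and $P_b$ are symmetric in the definition of admissibility, so even if Waldhausen's equivalence swaps the sides the conclusion is unaffected. It is also worth noting that standardness is robust under the homeomorphism $\varphi^{-1}$ — the image of a handle-separating curve is again a handle-separating curve, so $(P_a',P_b')$ genuinely lands in the class of standard double pants decompositions used in Definition~\ref{def admiss}.
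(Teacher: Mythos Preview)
Your proof is correct and follows essentially the same strategy as the paper's: produce a standard double pants decomposition $(P_a',P_b')$ whose curves bound disks in the two handlebodies of the given splitting, then apply Luo's transitivity (Proposition~\ref{handlebody}/Corollary~\ref{handle}) to each side. The only difference is that you explicitly invoke Waldhausen's uniqueness theorem to obtain $(P_a',P_b')$, whereas the paper simply asserts that such a standard decomposition is ``easy to find'' for the given embedding $S\hookrightarrow\S^3$; in fact that step does hide exactly the content of Waldhausen's theorem, so your version is the more honest one.
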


\begin{proof}
If  $M(P_a,P_b)=\S^3$ then there exists an embedding $S\hookrightarrow \S^3$ such that the curves of $P_a$ are contractible in inner 
handlebody for this embedding, while the curves of $P_b$ are contractible in the outer one. It is easy to find a standard double 
pants decomposition $(P_a',P_b')$ such that the curves of $P_a'$ and the curves of $P_b'$ are contractible in the inner and outer handlebodies respectively. In view of  Corollary~\ref{handle} flips act transitively on pants decompositions contractible inside a given handlebody, so the decomposition  $(P_a,P_b)$  is flip-equivalent to $(P_a',P_b')$, which is standard. So,  $(P_a,P_b)$ is admissible.

\end{proof}

Combining Lemma~\ref{st is S} with Lemma~\ref{S is adm} we arrive at the following characterization of admissible decompositions
(it also may be taken as an alternative definition):

\begin{theorem}
A double pants decomposition $(P_a,P_b)$ is admissible if and only if   $M(P_a,P_b)=\S^3$.

\end{theorem}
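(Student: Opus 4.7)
The plan is to deduce the theorem as a direct combination of Lemma~\ref{st is S} and Lemma~\ref{S is adm}, together with the invariance of the handlebody under flips (the proposition stating $H(f(P))=H(P)$).

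First I would dispatch the ``if'' direction, which is essentially a restatement of Lemma~\ref{S is adm}. Assuming $M(P_a,P_b)=\S^3$, that lemma already produces a standard double pants decomposition $(P_a',P_b')$ flip-equivalent to $(P_a,P_b)$ on each side, which is exactly what Definition~\ref{def admiss} requires.

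For the ``only if'' direction, I would start from an admissible $(P_a,P_b)$ and pick, via Definition~\ref{def admiss}, a standard $(P_a',P_b')$ with $P_a$ flip-equivalent to $P_a'$ and $P_b$ flip-equivalent to $P_b'$. Since flips preserve the associated handlebody, $H(P_a)=H(P_a')$ and $H(P_b)=H(P_b')$; the Heegaard splitting $M(\cdot,\cdot)$ is built from these two handlebodies glued along $S$, so $M(P_a,P_b)=M(P_a',P_b')$. Lemma~\ref{st is S} applied to the standard decomposition gives $M(P_a',P_b')=\S^3$, closing the argument.

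There is no real obstacle here: all of the nontrivial work has been absorbed into the two preceding lemmas and into the flip-invariance of the handlebody. The only small point that deserves care is the observation that a flip applied on the $a$-side does not alter the handlebody on the $b$-side (and vice versa), so that the equality $M(P_a,P_b)=M(P_a',P_b')$ is legitimate; this is immediate from the construction of $M(\cdot,\cdot)$ as a gluing of the two independently built handlebodies along the common boundary $S$.
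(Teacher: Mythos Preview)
Your proposal is correct and follows the same route as the paper, which simply states that the theorem arises by combining Lemma~\ref{st is S} with Lemma~\ref{S is adm}. You make explicit the one step the paper leaves implicit, namely that flip-invariance of the handlebody (the proposition $H(f(P))=H(P)$) forces $M(P_a,P_b)=M(P_a',P_b')$ in the ``only if'' direction.
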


Our next aim is to prove that morphisms of $\ADP_{g,0}$ act transitively on the objects of $\ADP_{g,0}$.
In other words, we will prove that flips and twists act transitively on double pants decompositions corresponding to Heegaard splittings of $\S^3$.
This is done in Section~\ref{g=2} for the case of $g=2$ and in Section~\ref{g>2} for a general case.

\begin{remark}
An admissible double pants decomposition can be also defined as any decomposition obtained from a standard decomposition via a sequence of
 flips and handle twists. In principle, this class of decompositions may be wider than one in Definition~\ref{def admiss} 
(where only flips are allowed), so the transitivity theorem for this class
looks stronger than one we are going to prove. In fact, Lemma~\ref{ht-adm} shows that these two classes coincide.

\end{remark}

\section{Transitivity of morphisms in case $g=2$}
\label{g=2}

In this section we prove the Main Theorem  for the case of surface of genus $g=2$ containing no marked points. 
The proof is based on the following  result~\ref{teacher} of Hatcher and Thurston~\cite{HT}.

\begin{definition}[{{\it $\SS$-moves}}]
\label{S_}
Let $P$ be a pants decomposition of $S$ and $a,c\in P$ be two curves such that $c$ cuts out of $S$ a handle $\h$ and let $a\subset \h$.
An {\it $\SS$-move} of a pants decomposition $P$ in a curve $a$ is a substitution of $a$ by a curve $a'$, where  
$a'\in \h$ is an arbitrary curve such that $|a\cap a'|=1$.

\end{definition}

\begin{theorem}[A.~Hatcher, W.~Thurston~\cite{HT},~\cite{H1}]
\label{teacher}
Let $S_{g,n}$ be a surface of genus $g$ with $n$ holes.
Any pants decomposition of $S_{g,n}$ can be transformed to any other pants decomposition of $S_{g,n}$ via flips and
  $\SS$-moves.

\end{theorem}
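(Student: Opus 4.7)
The plan is a double induction: outer on the complexity $\xi(S) = 3g+n-3$ of the surface, inner on the geometric intersection number $I(P,P') := \sum_{c \in P,\, c' \in P'} |c \cap c'|$, with $P,P'$ assumed in minimal position. The base cases are $S_{0,3}$ (no curves, nothing to prove), $S_{0,4}$ (one curve; any two non-isotopic simple closed curves are related by a single flip), and $S_{1,1}$ (one curve; any two are related by an $\mathcal{S}$-move). These base facts follow from the classification of simple closed curves on a four-holed sphere and a one-holed torus, equivalently from the connectivity of the Farey graph.

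For the inductive step, the first goal is to drive $I(P,P')$ down to zero using flips on $P$ (and $\mathcal{S}$-moves when forced). Fix a curve $c' \in P'$ meeting $P$ non-trivially. The arcs of $c'$ cut across the pairs of pants of $P$; consider an outermost arc $\alpha \subset c' \cap p$, i.e.\ one cobounding an embedded disk in some pair of pants $p$ with an arc of $\partial p$ disjoint from the rest of $P' \cap p$. Such an $\alpha$ always exists by an innermost/outermost argument on $p$, which is topologically a three-holed sphere (possibly self-folded). I would show that flipping the boundary curve of $p$ adjacent to $\alpha$ yields a new pants decomposition $P''$ with $|c' \cap P''| < |c' \cap P|$, and with no new intersections introduced at other curves of $P'$; iterating this construction reduces $I(P,P')$ at each step.

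Once $I(P,P')=0$, the curves of $P'$ are disjoint from those of $P$, so each curve of $P'$ sits inside a union of pairs of pants of $P$. A maximality-and-counting argument (both decompositions have $3g-3+n$ curves) forces $P \cap P' \neq \emptyset$ whenever $\xi(S) > 1$. Cutting along a common curve $c \in P \cap P'$ splits $S$ into two surfaces of strictly smaller complexity and splits $P,P'$ accordingly; the outer induction then finishes the argument.

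The main obstacle is the intersection-reduction step, specifically in the self-folded case: when the outermost arc $\alpha$ lies in a self-folded pair of pants, a flip of the outer boundary curve is not available (that curve is non-regular in the sense of the paper), and one must instead invoke an $\mathcal{S}$-move on the interior curve of the handle. Verifying that this move genuinely reduces $I(P,P')$ — and that no further exotic configurations arise — requires a careful case analysis of the possible arc patterns of $P'$ inside a handle, and is precisely the step that explains why $\mathcal{S}$-moves must appear alongside flips in the theorem statement.
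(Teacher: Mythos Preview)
The paper does not prove this theorem: it is quoted from Hatcher--Thurston \cite{HT} (closed case) and Hatcher \cite{H1} (general case) and used as a black box. The only related argument in the paper is Lemma~\ref{open1}, which derives the case $n>0$ from $n=0$ by induction on the number of holes: one views $S_{g,k+1}$ as $S_{g,k}$ with a thin ``holed strip'' attached along a curve of the pants decomposition, notes that a single flip can slide this strip to a neighbouring curve, and thereby reduces any move on $S_{g,k}$ to one on $S_{g,k+1}$. Your double induction on $\xi(S)$ and intersection number is a genuinely different route---closer in spirit to the arguments in the cited references than to anything the present paper does.

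There are, however, two soft spots in your sketch. First, the base cases: two curves on $S_{0,4}$ or $S_{1,1}$ are not in general related by a \emph{single} flip or $\SS$-move, since they may intersect arbitrarily many times; what Farey-graph connectivity gives you is a \emph{sequence} of such moves. Second, and more substantively, your inner induction variable $I(P,P')$ is carrying more than it needs to, and the assertion that the chosen flip ``introduces no new intersections at other curves of $P'$'' is unjustified as stated: the new curve produced by the flip could in principle meet curves of $P'\setminus\{c'\}$ more than the old curve did. The standard fix is to track only $|c'\cap P|$ for one fixed $c'\in P'$; once this reaches zero, $c'$ is isotopic to a curve of $P$ (a pair of pants contains no essential simple closed curves besides its boundary components), and you cut along it and invoke the outer induction on $\xi$. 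This avoids controlling the full $I(P,P')$, and it also renders your ``once $I(P,P')=0$'' paragraph moot: under your scheme $I(P,P')=0$ already forces $P=P'$, so there is nothing left to cut.
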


\begin{remark}
In the initial paper of  Hatcher and Thurston~\cite{HT} the surface $S_{g,n}$ is supposed to be closed surface containing 
no marked points. This assumption is removed in~\cite{H1}. 

\end{remark}

\begin{remark}
Theorem~\ref{teacher} does not imply immediately transitivity of morphisms in $\ADP_{g,0}$
(Theorem~\ref{trans}) since the set of handle twists  in $\ADP_{g,0}$
is much smaller than the set of $\SS$-moves in Theorem~\ref{teacher} (the former depends on the relative position of two decompositions).

\end{remark}



Now we will prove several lemmas: Lemmas~\ref{l S} and~\ref{trans on handle} will be used  for the proof of transitivity both in case
of $g=2$ and in general case. Lemma~\ref{claim2} is specific for genus 2, its generalization requires more work for general genus.

\begin{definition}[{{\it Double $\SS$-move}}]
\label{S}
Under the conditions of definition of a handle twist (Definition~\ref{Dehn}),
a  {\it double $\SS$-move} in $\h$ is the move switching the curves $a$ and $b$. 

\end{definition}

\begin{lemma}
\label{l S}
Double $\SS$-move is a morphism of $\DP_{g,0}$.

\end{lemma}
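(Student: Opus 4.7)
The plan is to realize the double $\SS$-move as a composition of exactly three handle twists. Inside the handle $\h$ the curves $a$ and $b$ are both non-regular, so no unzipped flip is available to them, and handle twists are the only elementary moves we may apply there. A key preliminary observation is that, since $\L(P_a)$ and $\L(P_b)$ are in general position, $h(a)$ and $h(b)$ must form a basis of $H_1(\h, \Z)$, whence $|a \cap b| = 1$ in the one-holed torus $\h$. In particular, after any handle twist the two new curves in $\h$ still meet once, so successive handle twists remain well-defined.

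Next I would write down the following three-step sequence, writing $T_\gamma$ for a Dehn twist along $\gamma$. Step 1: a handle twist replaces $b$ by $b' := T_a^{-1}(b)$, so $h(b') = h(b) - h(a)$. Step 2: applied to the pair $(a, b')$, a handle twist replaces $a$ by $a' := T_{b'}^{-1}(a)$, so $h(a') = h(a) + h(b') = h(b)$; since a non-separating simple closed curve on a one-holed torus is determined up to isotopy by its primitive homology class modulo sign, $a'$ is isotopic to $b$. Step 3: a handle twist replaces $b'$ by $b'' := T_{a'}(b')$, giving $h(b'') = h(b') - h(a') = -h(a)$, so $b''$ is isotopic to $a$. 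Composing, $(a, b) \mapsto (b, a)$ up to isotopy, which is precisely the desired double $\SS$-move.

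There is no serious obstacle, only some book-keeping. At each step the new pants decomposition is obtained from the previous one by a Dehn twist along one of its own curves, hence is still a valid pants decomposition, and general position of the two Lagrangian planes is preserved because each step modifies one generator of $\L(P_a)$ or $\L(P_b)$ by an integer combination of generators of the other plane, and such a modification does not change the sum $\L(P_a)+\L(P_b)=H_1(S,\Z)$. The only care needed is in choosing the orientations of the three Dehn twists; I would simply take whichever of $T_\gamma$ or $T_\gamma^{-1}$ produces the cancellations above.
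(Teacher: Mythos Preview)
Your proof is correct and follows essentially the same approach as the paper: both realize the double $\SS$-move as a composition of three handle twists, with the paper simply exhibiting this in a figure while you carry out the verification algebraically via homology classes on the one-holed torus. The fact you invoke, that a nonseparating simple closed curve on $\h$ is determined up to isotopy by its homology class, is exactly the paper's Lemma~\ref{homol->homot}, stated immediately after this one; there is no circularity, so your argument stands.
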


\begin{proof}
Any double $\SS$-move is a  composition of 3 handle twists, see Fig.~\ref{double s}.


\end{proof}

\begin{figure}[!h]
\begin{center}
\psfrag{a}{\scriptsize $a$}
\psfrag{b}{\scriptsize $b$}
\psfrag{a1}{\scriptsize $a'$}
\psfrag{b1}{\scriptsize $b'$}
\psfrag{a2}{\scriptsize $a''$}
\psfrag{b2}{\scriptsize $b''$}
\psfrag{a3}{\scriptsize $a'''$}
\psfrag{b3}{\scriptsize $b'''$}
\epsfig{file=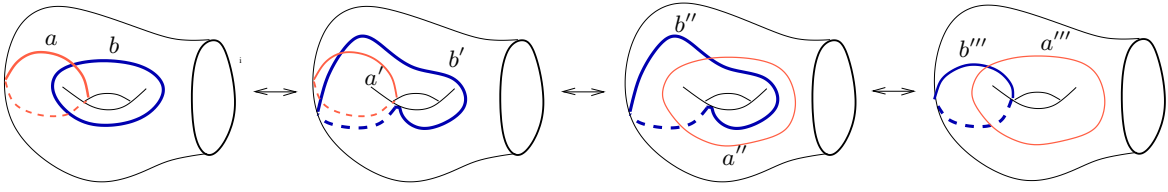,width=0.998\linewidth}
\caption{Double $\SS$-move as a composition of three handle twists.} 
\label{double s}
\end{center}
\end{figure}

\begin{lemma}
\label{homol->homot}
Let $\h$ be a handle and $c\in \h$ be a curve, $h(c)\ne 0$.
Then the homotopy class of $c$ is determined by the homology class $h(c)$. 

\end{lemma}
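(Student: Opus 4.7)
The plan is to reduce to the classical classification of simple closed curves on a one-holed torus. Recall that $\h$ is by definition a handle, i.e.\ a one-holed torus, so $H_1(\h,\Z) \cong \Z^2$ and the boundary curve $\partial\h$ is null-homologous (it represents a commutator).

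First, I would verify that under the hypothesis $h(c)\ne 0$, the curve $c$ is automatically non-separating on $\h$. Since $\h$ is orientable, every separating embedded simple closed curve is null-homologous (it bounds a subsurface on each side). Up to isotopy, the only separating embedded simple closed curves on a one-holed torus are the boundary-parallel ones, and these have trivial homology class. Hence $h(c)\ne 0$ forces $c$ to be non-separating in $\h$.

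Next, I would invoke the classical fact that unoriented isotopy classes of non-separating simple closed curves on a one-holed torus are in bijection with primitive elements of $H_1(\h,\Z) \cong \Z^2$ modulo $\pm 1$, the bijection being given by passing to the homology class. A clean way to see this is to cap off $\partial\h$ by a disk to obtain a torus $T^2 = \R^2/\Z^2$. On $T^2$ the statement is classical: a simple closed curve is isotopic to the image of a straight line of rational slope, and its homology class in $H_1(T^2,\Z)\cong \Z^2$ is the corresponding primitive vector (up to sign). Any non-separating simple closed curve in $\h$ can be isotoped to miss the capping disk, and such an isotopy transfers between $\h$ and $T^2$. This is precisely the statement that $h(c)$ (together with the knowledge that $c$ is embedded and essential) determines the isotopy class of $c$.

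Since in this paper a ``curve'' is an unoriented embedded non-contractible simple closed curve considered up to homotopy, and homotopy of embedded simple closed curves on a surface agrees with isotopy (a classical result of Baer), the two steps above combine to give the claim. The only real content is invoking the classification on the torus; the remaining work is bookkeeping about the cap and about the separating case, and I do not expect any genuine obstacle.
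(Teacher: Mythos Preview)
Your proposal is correct and follows essentially the same route as the paper: cap off $\partial\h$ with a disk to obtain a closed torus, invoke the classical classification of simple closed curves there by homology, and use that the complement of a homologically nontrivial curve in the torus is connected so that the position of the capped disk introduces no ambiguity. The paper's proof is terser but identical in substance; your explicit remarks on why $h(c)\ne 0$ forces $c$ to be non-separating and the appeal to Baer's theorem simply spell out steps the paper leaves implicit.
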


\begin{proof}
Attach a disk $D$ to the boundary $\partial \h$ so that we obtain a closed torus $\mathfrak t=\h\cup D$.
The statement of the lemma holds for a closed torus, so the homotopy class of $c$ is determined by $h(c)$ 
together with the relative position of the curve $c$ and the hole $\partial \h$ in the closed torus $\mathfrak t$. 
However, $\mathfrak t\setminus c$ is connected so the relative position of $c$ and $\partial \h$ is unique up to isotopy of $\mathfrak t$,
which implies that the homotopy class of $c$ is determined by $h(c)$. 

\end{proof}

\begin{lemma}
\label{trans on handle}
Let $(P_a,P_b)$ and $(P_a',P_b')$ be two standard double pants decompositions containing the same handle $\h$.
Then $(P_a,P_b)|_{\h}$ may be transformed to  $(P_a',P_b')|_{\h}$ 
by a sequence  of handle twists in $\h$
(where $(P_1,P_2)|_\h$ is a restriction of the double pants decomposition to the handle $\h$).

\end{lemma}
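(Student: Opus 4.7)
The plan is to reduce the statement to the classical fact that $SL_2(\Z)$ is generated by its elementary matrices, using Lemma~\ref{homol->homot} to pass from curves on $\h$ to their homology classes.

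First I would identify the data of $(P_a,P_b)|_\h$. Since $(P_a,P_b)$ is standard and $\h$ is one of its handles, $P_a\cap\h$ and $P_b\cap\h$ each consist of a single non-regular curve; call them $a$ and $b$. Together with the shared boundary $\partial\h$ these make up $(P_a,P_b)|_\h$, and similarly $(P_a',P_b')|_\h$ consists of $\partial\h$ together with a pair $(a',b')$. Because $\partial\h$ is separating in $S$ it contributes nothing to $H_1(S,\Z)$, so the general-position condition $\langle\L(P_a),\L(P_b)\rangle=H_1(S,\Z)$, restricted to the $\Z^2$-summand of $H_1(S,\Z)$ coming from $\h$, forces $\{h(a),h(b)\}$ to be a $\Z$-basis of $H_1(\h,\Z)\cong\Z^2$; likewise for $\{h(a'),h(b')\}$. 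By Lemma~\ref{homol->homot} the homotopy class of each of these curves inside $\h$ is then determined by its homology class, so it suffices to realize the basis change $(h(a),h(b))\mapsto(h(a'),h(b'))$ by a sequence of handle twists.

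Next I would compute the action of the four handle twists on the basis. Since $h(a)\cdot h(b)=\pm 1$ for any $\Z$-basis of $H_1(\h,\Z)$, a Dehn twist along $a$ sends $(h(a),h(b))$ to $(h(a),h(b)\pm h(a))$, with both signs realized by the twist and its inverse; symmetrically for twists along $b$. In coordinates these four moves act on an ordered basis by the elementary matrices $\left(\begin{smallmatrix}1 & \pm 1\\ 0 & 1\end{smallmatrix}\right)$ and $\left(\begin{smallmatrix}1 & 0\\ \pm 1 & 1\end{smallmatrix}\right)$, which classically generate $SL_2(\Z)$. Because pants curves are unoriented, I am free to replace $h(a')$ or $h(b')$ by its negative; a suitable choice of signs makes the change-of-basis matrix from $(h(a),h(b))$ to $(h(a'),h(b'))$ have determinant $+1$, hence lie in $SL_2(\Z)$, and writing it as a word in the elementary generators produces the required sequence of handle twists.

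The hardest point to get right, I expect, is the sign bookkeeping of the last step: the homology data only determines the ordered pair of \emph{un}oriented curves, so one must verify that the $\Z/2=GL_2(\Z)/SL_2(\Z)$ ambiguity is always absorbed by reversing a single orientation, keeping us in the subgroup realized by handle twists. Everything else is bookkeeping or a direct appeal to the classical generation of $SL_2(\Z)$ by elementary matrices.
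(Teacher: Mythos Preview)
Your proof is correct and follows essentially the same approach as the paper: both reduce to homology via Lemma~\ref{homol->homot} and then realize the required basis change by handle twists acting as elementary transvections. The only cosmetic difference is that the paper runs the Euclidean algorithm by hand through a short case analysis ($a=a'$; $a'=b$; general $a'$), whereas you invoke the classical generation of $SL_2(\Z)$ by elementary matrices directly.
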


\begin{proof}  
Let $a,b,a',b'$ be the curves of $P_a,P_b, P_a',P_b'$ contained in $\h$. We need to find a composition $\overline \psi$
of handle twists in $\h$ such that $\overline \psi(a)=a'$,  $\overline \psi(b)=b'$.

First, suppose that $a=a'$. 
Since $\langle h(a), h(b) \rangle = \langle h(a), h(b') \rangle$, one has $$h(b')=h(b)+l_ah(a)$$ for $l_a\in \Z$.
Applying $l_a$-tuple  twist along $a$ to the curve $b$ we obtain a curve $c$ such that $h(c)=h(b')$.
In view of Lemma~\ref{homol->homot} this implies that $b'=c$. So, in the case $a=a'$, $\overline \psi$ is a composition of flips along 
$a$.

Next, suppose that $a'=b$. Then a double $\SS$-move interchanging $a$ with $b$ reduces the question to the previous case.

Now, suppose that $a'\ne a,b$. Then 
we have
$$h(a')=l_a h(a)+l_b h(b),$$ where $l_a,l_b\in \Z$ are coprime. 
In view of Lemma~\ref{homol->homot}, we only need to find a sequence $\overline \psi$
of morphisms of $\DP_{g,0}$ taking $a$ to any curve $x\in S'$  such that $h(x)=l_a h(a)+l_b h(b)$.
Since $l_a$  and $l_b$ are coprime and a handle twist transforms $(h(a),h(b))$ into either $(h(a)\pm h(b),h(b))$ or 
$(h(a),h(b)\pm h(a))$,
this sequence of handle twists do exists.

\end{proof}


\begin{lemma}
\label{claim2}
Let $(P_a,P_b)$ be a standard double pants decomposition of $S_{2,0}$.
 Let  $\varphi=\varphi_k\circ\dots\circ\varphi_1$ be a sequence of flips of $P_a$ such that $\varphi(P_a)$ is a standard decomposition. 
Then there exists a morphism $\eta$ of $\ADP_{2,0}$ and a pants decomposition $P_b'$ of $S$ such that $\eta((P_a,P_b))$ is a 
standard double pants decomposition and $\eta((P_a,P_b))=(\varphi(P_a),P_b')$. 

\end{lemma}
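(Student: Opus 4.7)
The plan is to exploit the alternating-path observation from Example~\ref{S_2-orbit}: any path in the flip graph $\G$ joining two standard pants decompositions of $S_{2,0}$ can be replaced by an alternating path, and consequently $\varphi$ decomposes as a product of two-step blocks $\psi_j = \varphi_{2j}\circ\varphi_{2j-1}$, each taking a standard pants decomposition to another standard one through a single non-standard intermediate. An induction on the number $m$ of blocks reduces the lemma to proving the single-block case $\varphi=\varphi_2\circ\varphi_1$, applying the inductive hypothesis first to the initial $m-1$ blocks to reach a standard double pants decomposition, and then handling the last block.

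In the single-block case, $P_a=\{c,a_1,a_2\}$ has only the separating curve $c$ as a regular curve, so $\varphi_1$ is forced to flip $c$ to some $c'$, producing the non-standard intermediate $\{c',a_1,a_2\}$; then $\varphi_2$ flips one of $c',a_1,a_2$. I proceed by case analysis. If $\varphi_2$ flips $c'$, the uniqueness of the separating curve in $S_{2,0}\setminus(a_1\cup a_2)$ up to Dehn twists along $a_1$ and $a_2$ forces the new principal curve to equal $T_{a_1}^p T_{a_2}^q(c)$ for some integers $p,q$, and the required $\eta$ is built from $\varphi$ preceded by the handle twists in $\h_1$ and $\h_2$ realizing these powers on $(P_a,P_b)$, using Lemma~\ref{trans on handle}. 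If instead $\varphi_2$ flips one of the $a_i$, the new principal curve of $\varphi(P_a)$ is a genuinely different separating curve; here I construct $\eta$ by prepending a double $\SS$-move inside $\h_i$ (a morphism of $\ADP_{2,0}$ by Lemma~\ref{l S}) that swaps the roles of the $a$- and $b$-curves in the relevant handle, followed by the required handle twists and $b$-side flips that restore the standard form relative to $\varphi(P_a)$.

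The hardest step will be the $a_i$-flip case: I must verify that the constructed sequence of handle twists and $b$-flips actually produces a $P_b'$ for which $(\varphi(P_a),P_b')$ is a standard double pants decomposition, i.e.\ the new principal curve lies in $P_b'$ and the internal curves of $P_b'$ lie in the new handles. This ultimately reduces to tracking the Dehn twists implicit in each flip, controlled by Lemma~\ref{flip modulo twist}, against the homology classes moved by the available handle twists, controlled by Lemma~\ref{homol->homot} and the coprime-homology argument from the proof of Lemma~\ref{trans on handle}, which shows that the handle twists can realize any required integer combination. Admissibility of the intermediate decompositions is preserved throughout by Lemma~\ref{ht-adm} and the closure of admissibility under flips. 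Composing the morphisms constructed for each of the $m$ two-step blocks in the alternating decomposition of $\varphi$ then yields the required $\eta\in\ADP_{2,0}$ with $\eta((P_a,P_b))=(\varphi(P_a),P_b')$ standard.
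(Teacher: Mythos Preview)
Your overall architecture matches the paper's proof exactly: reduce to two-flip blocks via the alternating-path observation of Example~\ref{S_2-orbit}, then handle each block by a case analysis on whether $\varphi_2$ flips the curve $c'$ produced by $\varphi_1$ or one of the handle curves $a_i$.

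The execution of your first case, however, contains a genuine error. You assert that the new principal curve equals $T_{a_1}^p T_{a_2}^q(c)$, but $a_1$ and $a_2$ are disjoint from $c$, so these Dehn twists fix $c$ and your parametrization collapses to the single curve $c$. In reality the separating curves of $S_{2,0}$ disjoint from $a_1\cup a_2$ form an infinite family $T_{c'}^m(c)$, $m\in\Z$, obtained by twisting along the \emph{non-separating} intermediate curve $c'$, not along the $a_i$. Your proposed $\eta$ (handle twists in $\h_1,\h_2$ followed by $\varphi$) therefore leaves the principal curve of $P_b$ equal to $c$ while the principal curve of $\varphi(P_a)$ is $c''\ne c$, so the resulting pair is not standard. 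The paper handles this case with a six-step sequence (Fig.~\ref{comp-for-twist}) that uses flips on \emph{both} $P_a$ and $P_b$ together with a double $\SS$-move; handle twists alone cannot move the principal curve and do not suffice.

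Your second case is also shaky. After prepending the double $\SS$-move in $\h_i$, the first component becomes $\{c,b_i,a_j\}$ rather than $P_a$, so $\varphi$ no longer applies to it; if instead you apply $\varphi$ first, the two decompositions share no common handle and no handle twists are available to carry out the rest of your sketch. The paper's construction here is in fact simpler than the one you outline: it exhibits $\eta$ as a two-step sequence (Fig.~\ref{comp-for-dflip}) of flips acting in parallel on $P_a$ and $P_b$, with no $\SS$-move required.
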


\begin{proof}
Recall from  Example~\ref{S_2-orbit} that for each two pants decompositions $P_1$ and $P_2$ connected by a sequence of flips
there exists a sequence of flips connecting these pants decompositions and such that each flip in this sequence 
changes the type of pants decomposition from ``self-folded'' 
into ``non-self-folded'' or back (in the other word takes a standard  pants decomposition to a non-standard one and back). 
This implies that it is sufficient to show the lemma for compositions 
$\varphi=\varphi_2\circ\varphi_1$ of two flips.

If $\varphi_2$ changes the same curve as $\varphi_1$ does, then $\varphi$ is a twist and the required composition 
$\eta$ is shown in Fig.~\ref{comp-for-twist}.

\begin{figure}[!h]
\begin{center}
\psfrag{e1}{\scriptsize $\eta_1$}
\psfrag{e2}{\scriptsize $\eta_3\circ\eta_2$}
\psfrag{e3}{\scriptsize $\eta_5\circ\eta_4$}
\psfrag{e4}{\scriptsize $\eta_6$}
\psfrag{f}{\scriptsize $f_a,f_b$}
\psfrag{S}{\scriptsize $\SS$-move}
\psfrag{f1}{\scriptsize $\varphi_1$}
\psfrag{f2}{\scriptsize $\varphi_2$}
\psfrag{a}{\small (a)}
\psfrag{b}{\small (b)}
\epsfig{file=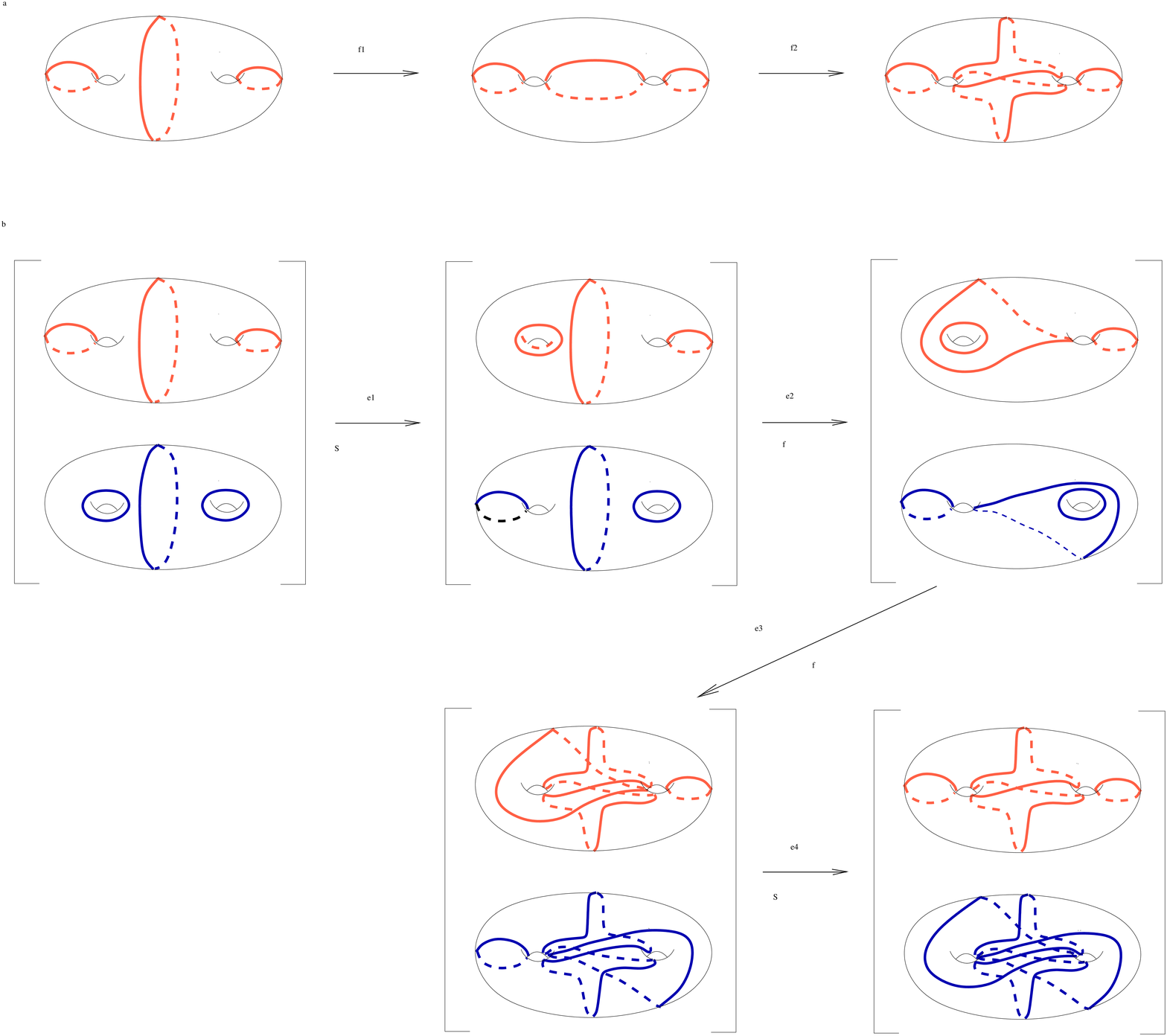,width=0.998\linewidth}
\caption{(a) Twist $\varphi(P_a)$: $\varphi=\varphi_2\circ\varphi_1$; (b) Composition $\eta(P_a,P_b)$ for the twist $\varphi$.} 
\label{comp-for-twist}
\end{center}
\end{figure}

If $\varphi_1$ and $\varphi_2$ change different curves then (modulo twists) $\varphi$  looks like in Fig.~\ref{comp-for-dflip}(a)
and the required composition 
$\eta$ is shown in Fig.~\ref{comp-for-dflip}(b).

\begin{figure}[!h]
\begin{center}
\psfrag{e1}{\scriptsize $\eta_1$}
\psfrag{e2}{\scriptsize $\eta_2$}
\psfrag{f1}{\scriptsize $\varphi_1$}
\psfrag{f2}{\scriptsize $\varphi_2$}
\psfrag{a}{\small (a)}
\psfrag{b}{\small (b)}
\epsfig{file=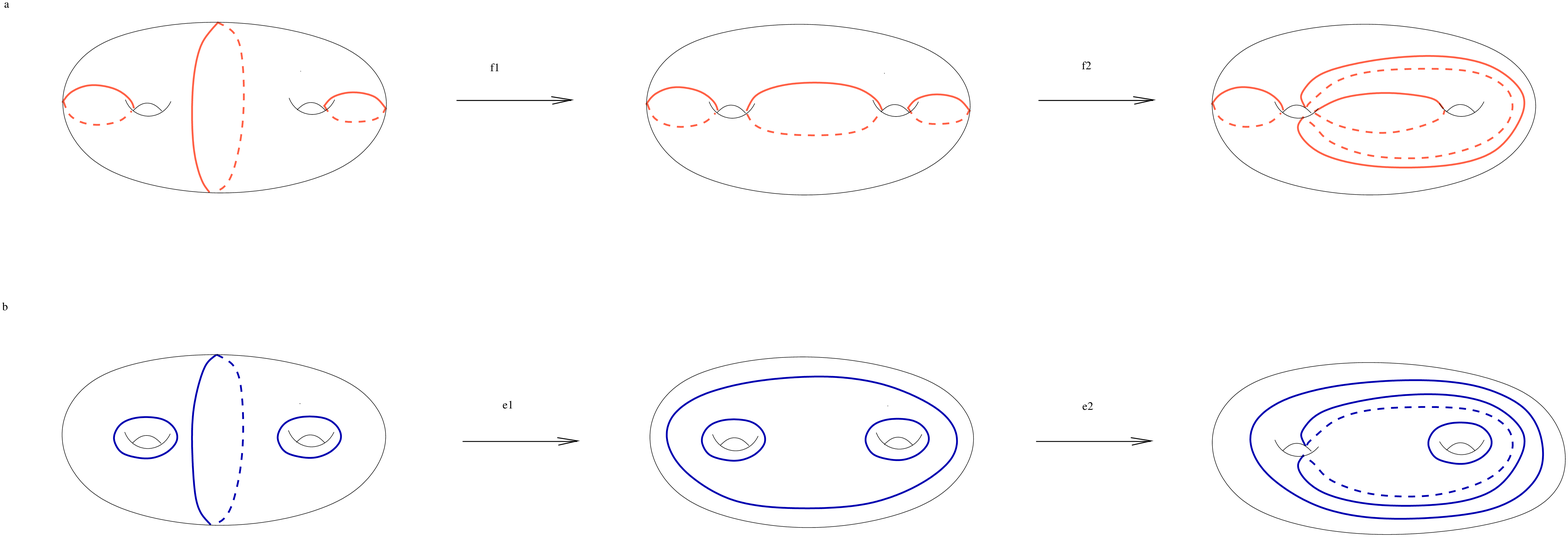,width=0.998\linewidth}
\caption{(a) Composition $\varphi(P_a)$ of two flips  $\varphi=\varphi_2\circ\varphi_1$;  
(b) Composition $\eta=\eta((P_a,P_b))$ for $\varphi$: $\eta(P_a)=\varphi(P_a)$, $\eta(P_b)$ is shown.} 
\label{comp-for-dflip}
\end{center}
\end{figure}

This completes the proof of Lemma~\ref{claim2}. 

\end{proof}

\begin{theorem}
\label{transitivnost na S_2}
Morphisms of $\ADP_{2,0}$ act transitively on the objects of $\ADP_{2,0}$.

\end{theorem}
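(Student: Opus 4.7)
The plan is to reduce any two objects of $\ADP_{2,0}$ to the same one via successive stages, equalizing the two coordinates in turn.

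Let $DP = (P_a, P_b)$ and $DP' = (P_a', P_b')$ be two admissible double pants decompositions of $S_{2,0}$. First, by Definition~\ref{def admiss}, each of them is flip-equivalent (coordinate by coordinate) to a standard double pants decomposition, and each such flip is already an elementary morphism of $\ADP_{2,0}$. So I may replace $DP$ and $DP'$ by standard representatives and assume from the outset that both input decompositions are standard.

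Second, I equalize the $P_a$-sides. Because both double decompositions give the $3$-sphere (Lemmas~\ref{st is S} and~\ref{S is adm}), the pants decompositions $P_a$ and $P_a'$ bound handlebodies in $\S^3$; by Proposition~\ref{handlebody} and Corollary~\ref{handle} flips act transitively on the pants decompositions contractible in a given handlebody, so there is a flip sequence $\varphi$ from $P_a$ to $P_a'$ on the abstract surface. Example~\ref{S_2-orbit} lets me re-route $\varphi$ through an alternating path in the orbit graph, so I can decompose $\varphi = \varphi^{(N)} \circ \cdots \circ \varphi^{(1)}$ where each $\varphi^{(i)}$ is a two-flip segment whose endpoints are standard pants decompositions. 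Applying Lemma~\ref{claim2} inductively lifts every $\varphi^{(i)}$ to a morphism of $\ADP_{2,0}$ whose image is again a standard admissible DP; composing these yields a single morphism $\eta$ with $\eta(DP) = (P_a', \tilde{P}_b)$, still standard.

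Third, I equalize the $P_b$-sides. The two standard DPs $(P_a', \tilde{P}_b)$ and $(P_a', P_b')$ share the same $P_a'$; for $g=2$ this forces them to share the same principle (separating) curve, hence the same decomposition $S_{2,0} = \h_1 \cup \h_2$ into two handles. Inside each $\h_i$ the $P_a'$-curve is common, so by the case $a = a'$ in the proof of Lemma~\ref{trans on handle} (Dehn twists of the $P_b$-curve along the fixed $P_a'$-curve, i.e.\ handle twists that preserve $P_a'$) I can transform $\tilde{P}_b \cap \h_i$ into $P_b' \cap \h_i$. Performing this in $\h_1$ and then in $\h_2$ finishes the morphism from $\eta(DP)$ to $DP'$.

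The hard step is the second one: one must ensure that $P_a$ and $P_a'$ are flip-equivalent on the abstract surface and not merely related up to the mapping class group. This uses admissibility together with the uniqueness of genus-$2$ Heegaard splittings of $\S^3$ to identify the handlebodies, so that Corollary~\ref{handle} produces the required flip sequence; once this is set up, the combinatorial re-routing from Example~\ref{S_2-orbit} makes the sequence amenable to Lemma~\ref{claim2}, and the remaining stages are bookkeeping.
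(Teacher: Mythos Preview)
Your second stage contains a genuine gap. You claim that $P_a$ and $P_a'$ are flip-equivalent on the abstract surface, invoking Corollary~\ref{handle} together with the uniqueness of genus-$2$ Heegaard splittings of $\S^3$. But Corollary~\ref{handle} says flip-equivalence is the same as defining \emph{the same} handlebody on the fixed surface $S$, and there is no reason for $H(P_a)$ and $H(P_a')$ to coincide: flips preserve the Lagrangian plane $\L(P)$, whereas two standard pants decompositions with different handle-curves have different Lagrangian planes. Concretely, if $(P_a,P_b)$ is any standard admissible decomposition then so is $(P_a',P_b'):=(P_b,P_a)$, yet $\L(P_a)\ne\L(P_b)$, so $P_a$ and $P_a'=P_b$ are not flip-equivalent. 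Waldhausen's uniqueness only says the two embeddings $S\hookrightarrow\S^3$ are isotopic \emph{in $\S^3$}; pulling that isotopy back to the abstract surface yields a possibly nontrivial mapping class, which is exactly the ambiguity you yourself flag (``not merely related up to the mapping class group'') but do not resolve.

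What is missing is precisely the mechanism that changes the handlebody: $\SS$-moves. The paper's proof uses Theorem~\ref{teacher} (Hatcher--Thurston) to connect $P_a$ to $P_a'$ by a sequence of flips \emph{and} $\SS$-moves; in genus~$2$ the $\SS$-moves occur only at standard decompositions, so the sequence breaks into flip-blocks between standard decompositions (handled by Lemma~\ref{claim2}, as you do) together with $\SS$-moves at standard decompositions, which are realized as $\ADP_{2,0}$-morphisms via Lemma~\ref{trans on handle}. Your re-routing through alternating paths and your third stage are fine, but without the $\SS$-moves the argument cannot leave the flip-equivalence class of $P_a$.
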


\begin{proof}
By Definition~\ref{def-double}, the objects of $\ADP_{2,0}$ are admissible double pants decompositions, 
so, it is sufficient to prove the transitivity on the set of standard pants decompositions.  

Let $(P_a,P_b)$ and $(P_a',P_b')$ be two standard double pants decompositions. 
If the principle curve of  $(P_a,P_b)$ coincides with one of $(P_a',P_b')$ then  Lemma~\ref{trans on handle} implies that
$(P_a,P_b)$ is $\DP$-equivalent to $(P_a',P_b')$. 

Suppose that the principle curves of $(P_a,P_b)$ and $(P_a',P_b')$ are different.
It is left to show that there exists a sequence $\eta=\eta_n\circ\dots\circ \eta_1$ of morphisms of $\ADP_{2,0}$ such that 
$\eta((P_a,P_b)=(P_a',P_b'')$ and $P_b''$ is arbitrary pants decomposition turning the pair $(P_a',P_b'')$ into a standard
pants decomposition.

By Theorem~\ref{teacher}, there exists a sequence $\psi=\psi_k\circ\dots\circ \psi_1$ 
of flips and $\SS$-moves taking $P_a$ to $P_a'$.   
By definition, in case of $g=2$ an $\SS$-move is applicable only to standard double pants decompositions. 
This implies that the sequence $\psi$ is a composition of several   subsequences of two types:
\begin{itemize}
\item 
subsequences of flips, each subsequence takes a standard pants decomposition to another standard one;
\item 
$\SS$-moves.
\end{itemize}
By Lemma~\ref{claim2}, a subsequence of the first type may be extended to a morphism of $\ADP_{2,0}$ 
taking a standard pant decomposition to 
another a standard one. By Lemma~\ref{trans on handle} any $\SS$-move of the component $P_a$ may be realized as a sequence of morphisms 
of $\ADP_{2,0}$ taking a standard double pants decomposition to a standard one.
This implies that $\psi$ may be extended to a morphism of $\ADP_{2,0}$ and the theorem is proved.

\end{proof}

%
%

\section{Transitivity of morphisms in case of surfaces without marked points}
\label{g>2}
In this section we adjust the proof of Theorem~\ref{transitivnost na S_2} to the case of higher genus.

\subsection{Preparatory lemmas}

\begin{lemma}
\label{s in stand}
Let $P$ be a pants decomposition and $\h$ be a handle cut out by some $c\in P$. 
Let $\varphi$ be an $\SS$-move  in  $\h$. 
Then $\varphi=(\overline \varphi_1)^{-1}\circ \varphi_2 \circ \overline \varphi_1$ 
where $\overline \varphi_1$ 
is a sequence of flips preserving $\h$ and $\varphi_2$ is an $\SS$-move of a standard pants decomposition.

\end{lemma}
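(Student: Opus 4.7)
The plan is to construct $\overline{\varphi}_1$ as a sequence of flips that avoids both curves $c$ and $a$ (so that it preserves the handle $\h$ pointwise at the level of curves), carrying $P$ to a standard pants decomposition in which $c$ becomes one of the principle curves. Once this is achieved, the $\SS$-move $\varphi$ and the $\SS$-move $\varphi_2 := \varphi$ performed on $\overline{\varphi}_1(P)$ are literally the same operation (both replace $a$ by $a'$ inside $\h$), and the identity $\varphi = \overline{\varphi}_1^{-1} \circ \varphi_2 \circ \overline{\varphi}_1$ follows by applying the three morphisms to $P$ in succession.

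To produce $\overline{\varphi}_1$, I would cut $S$ along $c$ and work on $S' = S\setminus \mathrm{int}(\h)$, a genus $g-1$ surface with one boundary curve $c$. The restriction $P|_{S'} = P\setminus\{c,a\}$ is a pants decomposition of $S'$, and the flips of $P$ that do not touch $c$ or $a$ correspond precisely to flips of $P|_{S'}$. What I need is a \emph{standard} pants decomposition $P''$ of $S'$ (i.e., one with $g-1$ curves $c_2,\dots,c_g$ cutting out handles $\h_2,\dots,\h_g$) such that $P''$ is flip-equivalent to $P|_{S'}$; then $\overline{\varphi}_1(P) = P'' \cup \{c,a\}$ is a standard pants decomposition of $S$ with principle curves $c,c_2,\dots,c_g$.

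For the flip-equivalence, I would invoke the handlebody picture of Proposition~\ref{handlebody} and Corollary~\ref{handle}. Let $H = H(P)$ be the handlebody of $S$ in which every curve of $P$ is contractible. Because $c\in P$, it bounds a disk $D_c\subset H$; cutting $H$ along $D_c$ separates a solid torus (completing $\h$) from a genus $g-1$ handlebody $H'$ bounded by $S'\cup D_c$, and the curves of $P|_{S'}$ are all contractible in $H'$. One can then choose meridian disks $D_2,\dots,D_g$ for $H'$ with boundaries $a_2,\dots,a_g\subset S'$, encircle each $a_i$ by a curve $c_i\subset S'$ cutting out a handle $\h_i$, and complete this to a standard pants decomposition $P''$ of $S'$ all of whose curves are contractible in $H'$. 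Proposition~\ref{handlebody} (for the closed genus $g-1$ surface obtained by capping $c$ with $D_c$, or equivalently for handlebodies with one marked disk) then yields a flip sequence from $P|_{S'}$ to $P''$; this is the desired $\overline{\varphi}_1$.

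The main obstacle is the last step: Proposition~\ref{handlebody} is stated for closed surfaces without boundary, whereas I am applying it to a pants decomposition of $S'$, which has one boundary circle $c$. So the technical point is to verify that the handlebody/flip-transitivity result indeed holds in this relative version (fixing $c$). The cleanest way is to observe that capping $c$ with the disk $D_c$ produces a closed surface of genus $g-1$, that a pants decomposition of $S'$ together with the unchanged configuration near $c$ corresponds to a pants decomposition of the capped surface, and that flips of the capped decomposition which do not touch $c$ are exactly flips of $P|_{S'}$; once this translation is made, Proposition~\ref{handlebody} applies directly.
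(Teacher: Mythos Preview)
Your overall strategy matches the paper's: cut off $\h$, transform $P|_{S'}$ (where $S'=S\setminus\h$) by flips into a standard decomposition, and conjugate the $\SS$-move through this sequence. Where you diverge is in justifying that \emph{flips alone} suffice on $S'$. The paper does this in one line via zipper systems: since $P$ is compatible with some zipper system $Z$, zipped flips (which are ordinary flips) connect $P$ to any other $Z$-compatible decomposition, in particular to one that is standard on $S'$; no $\SS$-moves enter. Your route via Proposition~\ref{handlebody} is reasonable in spirit, but the capping-off argument you propose to reduce to the closed case does not work as written.

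The concrete problem is this: capping the boundary $c$ of $S'$ with a plain disk $D_c$ turns the pair of pants of $P|_{S'}$ adjacent to $c$ into an annulus, so the $3g-5$ curves of $P|_{S'}$ no longer form a pants decomposition of the capped closed genus-$(g{-}1)$ surface (which carries only $3g-6$ curves; two of yours become isotopic). Hence there is no bijection between pants decompositions of $S'$ and of the capped surface, and the phrase ``flips of the capped decomposition which do not touch $c$'' is meaningless once $c$ is contractible. To repair the handlebody argument you would need either to cap with a \emph{once-marked} disk (so $S'$ becomes $S_{g-1,1}$) and invoke a marked-surface version of Luo's result, or to cite Proposition~\ref{handlebody} directly for handlebodies with a boundary disk---both are true, but neither is what the paper states. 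The paper's zipper-system remark sidesteps this detour entirely and is the cleaner fix.
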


\begin{proof}
Consider the surface $S\setminus \h$.
By Theorem~\ref{teacher} it is possible to transform any pants decomposition of $S\setminus \h$ to a standard one 
(clearly it may be done using flips only: one may use zipped flips with respect to any zipper system compatible with $P$). 
This defines the sequence $\overline \varphi_1$. 
Then we apply $\SS$-move in
the handle $\h$ and apply  $\varphi_1^{-1}$ to bring the pants decomposition of  $S\setminus \h$ into initial position. 

\end{proof}

In the proof of Theorem~\ref{transitivnost na S_2} we used the fact that in case of $g=2$  $\SS$-moves are defined for standard 
decompositions only. This does not hold for $g>2$. However, in view of Lemma~\ref{s in stand} the following holds.

\begin{lemma}
\label{s standard}
Let $P_a$ and $P_a'$ be standard pants decompositions and there exists a sequence $\varphi=\varphi_k\circ\dots\circ\varphi_1$ of flips 
and $\SS$-moves such that $\varphi(P_a)=P_a'$. Then it is possible to choose $\varphi$ in such a way that all $\SS$-moves
in $\varphi$ are applied to standard decompositions.

\end{lemma}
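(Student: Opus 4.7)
The plan is to apply Lemma~\ref{s in stand} to every $\SS$-move that appears in the given sequence and substitute it by a conjugate whose middle factor is an $\SS$-move of a standard decomposition. I would traverse $\varphi=\varphi_k\circ\dots\circ\varphi_1$ and leave every flip unchanged. For each index $i$ at which $\varphi_i$ is an $\SS$-move, let $P_{i-1}:=\varphi_{i-1}\circ\dots\circ\varphi_1(P_a)$ denote the pants decomposition on which $\varphi_i$ is performed. By the very definition of an $\SS$-move, there is a handle $\h_i$ cut out by some curve $c_i\in P_{i-1}$ such that $\varphi_i$ acts inside $\h_i$. Hence the hypotheses of Lemma~\ref{s in stand} are satisfied for the pair $(P_{i-1},\varphi_i)$.

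Applying Lemma~\ref{s in stand} to this pair produces a factorization
\[
\varphi_i = (\overline{\varphi}_{1,i})^{-1}\circ \varphi_{2,i}\circ \overline{\varphi}_{1,i},
\]
in which $\overline{\varphi}_{1,i}$ is a sequence of flips preserving $\h_i$ that takes $P_{i-1}$ to a standard decomposition, and $\varphi_{2,i}$ is an $\SS$-move applied to that standard decomposition. Performing this substitution for every $\SS$-move in the original sequence gives a new sequence $\varphi'$ consisting solely of flips and $\SS$-moves whose total composition still equals $\varphi$, so $\varphi'(P_a)=P_a'$. Every $\SS$-move appearing in $\varphi'$ is one of the middle factors $\varphi_{2,i}$, and each of these acts on a standard pants decomposition by the construction of Lemma~\ref{s in stand}, which is exactly what the statement requires.

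There is no genuine obstacle in this argument; the one point to be mindful of is that each auxiliary flip sequence $\overline{\varphi}_{1,i}$ must be chosen relative to the actual intermediate decomposition $P_{i-1}$ and not relative to $P_a$, so Lemma~\ref{s in stand} has to be invoked afresh for each $\SS$-move occurring in the original sequence. The hypothesis that $P_a'$ is itself standard plays no role in the rewriting; it is simply inherited from the data and preserved by the fact that the overall composition has not changed.
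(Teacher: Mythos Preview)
Your argument is correct and is precisely the approach the paper has in mind: the paper does not give a separate proof of Lemma~\ref{s standard} but simply states that it holds ``in view of Lemma~\ref{s in stand}'', and your write-up is the natural unpacking of that remark---replace each $\SS$-move $\varphi_i$ in the sequence by its factorization $(\overline{\varphi}_{1,i})^{-1}\circ\varphi_{2,i}\circ\overline{\varphi}_{1,i}$ supplied by Lemma~\ref{s in stand}. Your observation that each $\overline{\varphi}_{1,i}$ must be chosen relative to the actual intermediate decomposition $P_{i-1}$ is the only point requiring care, and you have handled it correctly.
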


\begin{lemma}
\label{g=0}
Let $S_{0,g}$ be a sphere with $g$ holes. Then any pants decomposition of  $S_{0,g}$  may be transformed to any other 
pants decomposition of $S_{0,g}$  by a sequence of flips.

\end{lemma}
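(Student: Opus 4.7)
The plan is to invoke Theorem~\ref{teacher} (Hatcher--Thurston) and to observe that on a genus zero surface there is never a handle available to perform an $\SS$-move, so the Hatcher--Thurston sequence necessarily consists of flips alone.

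More precisely, let $P$ and $P'$ be two pants decompositions of $S_{0,g}$. By Theorem~\ref{teacher} there exists a sequence $\varphi_1,\dots,\varphi_k$ of flips and $\SS$-moves with $\varphi_k\circ\cdots\circ\varphi_1(P)=P'$, and each intermediate pants decomposition lives on the same surface $S_{0,g}$. I would then show that no $\varphi_i$ can be an $\SS$-move: by Definition~\ref{S_}, an $\SS$-move requires a curve $c$ in the current pants decomposition that cuts out a handle $\h$ (a one-holed torus). However, every simple closed curve on $S_{0,g}$ separates the surface into two subsurfaces, each of which is a sphere with holes and hence of genus zero. Consequently no curve on $S_{0,g}$ bounds a one-holed torus, and no $\SS$-move is admissible at any step of the sequence. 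The Hatcher--Thurston sequence therefore consists entirely of flips, which is exactly the claim.

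There is essentially no obstacle once Theorem~\ref{teacher} is in hand; the whole proof rests on the single genus-zero observation above. The only low-level point to verify is that the standing hypothesis $2g+n>2$ of Theorem~\ref{teacher} specializes here to $g>2$, so that the statement is covered for $g\geq 3$. For the degenerate cases $g\leq 3$ the pants decomposition of $S_{0,g}$ has at most one curve, so the lemma is either vacuous or reduces to a single flip between the two possible choices of that curve.
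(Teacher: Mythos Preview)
Your proof is correct and follows exactly the paper's approach: the paper's proof is the single sentence ``The statement follows immediately from Theorem~\ref{teacher} since $\SS$-moves could not be applied to $S_{0,g}$,'' which is precisely what you spell out in more detail. Your final paragraph on degenerate cases is unnecessary (Theorem~\ref{teacher} already covers $g\ge 3$, and for $g\le 3$ the pants decomposition is empty so the lemma is vacuous) and the phrase ``two possible choices of that curve'' is not accurate, but this does not affect the argument.
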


\begin{proof}
The statement follows immediately from Theorem~\ref{teacher} since   $\SS$-moves could not be applied to $S_{0,g}$.

\end{proof}

\subsection{Transitivity in case of the same zipper system}

\begin{definition}[{{\it  $(P_a,P_b)$ compatible with $Z$}}]
A double pants decomposition $(P_a,P_b)$ is {\it compatible} with a zipper system $Z$ if $P_a$ is compatible with $Z$ and
 $P_b$ is compatible with $Z$.

\end{definition}

In this section we will prove that if  $(P_a,P_b)$  an  $(P_a',P_b')$ are two admissible double pants decompositions 
compatible with the same zipper system then  $(P_a,P_b)$ is $\DP$-equivalent to  $(P_a',P_b')$.

\begin{definition}[{{\it Strictly standard decomposition}}]
An admissible double pants decomposition $(P_a,P_b)$ is {\it strictly standard} if it is standard and 
$c\in \{P_a\cup P_b\}\setminus \{P_a\cap P_b\}$ if and only if $c$ is contained inside some handle cut out by a principle curve 
of the decomposition.

\end{definition}

\begin{example}
The double pants decomposition in Fig.~\ref{standard double_pant} is standard but not strictly standard.
\end{example}

\begin{remark}
Although we will not need this below, one can mention that
strictly standard decompositions can be also characterized by any of the following equivalent minimal properties:
\begin{itemize}
\item double pants decompositions with minimal possible number of distinct curves (i.e. with $4g-3$ curves for $S_{g,0}$);
\item double pants decompositions with minimal possible number of intersections of curves (i.e. with $g$ intersections).
\end{itemize}


\end{remark}

To prove transitivity of morphisms of $\ADP_{g,0}$ on the objects of $\ADP_{g,0}$ 
it is sufficient to prove $\DP$-equivalence of all standard 
double pants decompositions (since the objects of $\ADP_{g,0}$ are admissible ones).
Furthermore, any standard double pants decomposition is $\DP$-equivalent to some strictly standard one in view of Lemma~\ref{g=0}.
So, it is sufficient to prove the transitivity of morphisms of $\ADP_{g,0}$ on strictly standard double pants decompositions.

To prove the transitivity of morphisms of $\ADP_{g,0}$ on strictly standard double pants decompositions we do the following:
\begin{itemize}
\item
we show that each strictly 
standard double pants decomposition is compatible with some zipper system (see Proposition~\ref{compatible zip});
\item we prove that morphisms of $\ADP_{g,0}$ act transitively on strictly standard double pants decompositions 
compatible with a given zipper system
(see Lemma~\ref{same zip}); 
\item
Finally, we show that for two different zipper systems $Z$ and $Z'$ we may find a sequence of zipper systems $Z=Z_1,Z_2\dots,Z_k=Z'$,
in which $Z_i$ differs from $Z_{i+1}$ by a twist along some curve $c$, $|c\cap Z_i|=2$. We show (Lemma~\ref{l twist}) that 
in this case morphisms of $\ADP_{g,0}$ are sufficient to change $Z_i$ to $Z_{i+1}$.

\end{itemize}

\begin{prop}
\label{compatible zip}
For any strictly standard double pants decomposition $(P_a,P_b)$ there exists a zipper system $Z$ compatible with  $(P_a,P_b)$.

\end{prop}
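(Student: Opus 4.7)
The approach is to construct the zipper system $Z$ explicitly, with a local piece in each handle and a global piece on the common complement. Inside each handle $\h_i$, model it as $T^2\setminus D$ with $(a_i,b_i)$ a symplectic basis of $H_1(\h_i)$ meeting in a single point. Choose two parallel simple closed curves in $T^2$ of the primitive homology class $[a_i]+[b_i]$, placed together with the disk $D$ so that one of them, $\zeta_i$, lies entirely inside $\h_i$, while the other, $\gamma'_i$, crosses $c_i=\partial D$ transversally in exactly two points; then $\beta_i:=\gamma'_i\cap\h_i$ is an arc in $\h_i$ with both endpoints on $c_i$. After a small perturbation to avoid the point $a_i\cap b_i$, each of $\zeta_i$ and $\beta_i$ meets $a_i$ in exactly one point and $b_i$ in exactly one point, and $\zeta_i,\beta_i$ are disjoint from each other.

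On the planar complement $\Sigma_0=S\setminus\bigcup_i\h_i$ (which is empty when $g=2$), where $P_a$ and $P_b$ agree, produce a compatible arc system $\mathcal{A}_0$ by the classical pants-by-pants construction: in each pair of pants place the three arcs of Lemma~\ref{compatible zipper}, and glue consistently across every shared interior curve, obtaining disjoint arcs with exactly two endpoints on each $c_i$ and crossing each interior pants curve of $\Sigma_0$ in two points. After an isotopy near each $c_i$, align the two $\mathcal{A}_0$-endpoints with the two $\beta_i$-endpoints on $c_i$. Set
\[
Z := \bigsqcup_i \zeta_i \; \sqcup \; \Bigl(\bigcup_i \beta_i \; \cup \; \mathcal{A}_0\Bigr).
\]
After the endpoint identification, the second piece is a disjoint union of embedded closed curves in $S$; by choosing the hexagon matchings at the interior pants curves so that the $2$-regular subgraph formed by the arcs of $\mathcal{A}_0$ on the vertex set $\{c_1,\dots,c_g\}$ is a single Hamiltonian cycle, this piece becomes a single closed curve $z_0$, and therefore $|Z|=g+1$. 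Compatibility with $P_a\cup P_b$ is immediate: $|c_i\cap Z|=2$ from the two $\beta_i$-endpoints; $|a_i\cap Z|=|\beta_i\cap a_i|+|\zeta_i\cap a_i|=1+1=2$ and similarly for $b_i$; and each curve of $\Sigma_0$ is crossed twice by $\mathcal{A}_0$ by construction.

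It remains to verify that $Z$ decomposes $S$ into two $(g+1)$-holed spheres. Locally, $\h_i\setminus(\zeta_i\cup\beta_i)$ splits into two components---a narrow annulus between the parallel curves $\zeta_i$ and $\gamma'_i$, and a pair of pants obtained from the wide region after removing $D$---while $\Sigma_0\setminus\mathcal{A}_0$ splits into two hexagonal components by Lemma~\ref{compatible zipper}. Gluing these local pieces along the two arcs of each $c_i\setminus\{\beta_i\text{-endpoints}\}$ assembles them into exactly two connected global components of $S\setminus Z$, each inheriting one boundary circle from every $\zeta_i$ together with one from $z_0$---totaling $g+1$ boundary circles per component. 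The Euler characteristic ($\chi(S\setminus Z)=2-2g$ split equally into $\chi=1-g$ per component) then forces genus zero, identifying each component as a $(g+1)$-holed sphere. The main obstacle is this combinatorial assembly step: ensuring both that $z_0$ comes out connected and that the hexagonal pieces of $\Sigma_0\setminus\mathcal{A}_0$ glue consistently to the correct annulus-or-pants piece inside each handle. Both reduce to a direct graph-theoretic and topological check on the $4$-regular multigraph on $\{c_1,\dots,c_g\}$ formed by the $\beta_i$ self-loops and the $\mathcal{A}_0$-arcs between distinct handles.
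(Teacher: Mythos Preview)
Your overall plan---one closed curve $\zeta_i$ in each handle, plus a single long curve $z_0$ threading through all the handles and across $\Sigma_0$---is exactly the paper's plan. The difference is in how $z_0$ is produced on the planar piece $\Sigma_0$: you try to build it by hand from local hexagon arcs and then argue about matchings, whereas the paper observes that the dual graph of the (common) pants decomposition of $\Sigma_0$, with each handle appended as a leaf, is a \emph{tree}, embeds this tree in a plane, and takes $z_0$ to be the curve going around the tree (the boundary of a regular neighborhood). That single observation dispatches everything you defer to your last paragraph.

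Two specific issues. First, a small factual error: the two pieces of $\h_i\setminus(\zeta_i\cup\beta_i)$ are \emph{both} annuli, not ``an annulus and a pair of pants.'' Cutting the one-holed torus along the non-separating curve $\zeta_i$ gives a pair of pants with two $\zeta_i$-boundaries and one $c_i$-boundary; the arc $\beta_i$, being parallel to $\zeta_i$, then separates the two $\zeta_i$-boundaries, yielding two annuli (consistently with $\chi$ going from $-1$ to $0$). Since your final Euler-characteristic count uses only $\chi(S\setminus Z)=2-2g$ and not this intermediate description, the slip is harmless, but it should be corrected.

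Second, and more seriously, your last paragraph is where the proof actually lives, and it is not carried out. You need a \emph{single} choice of arc matchings across the interior curves of $\Sigma_0$ that simultaneously makes (i) $z_0$ a single closed curve, (ii) $\Sigma_0\setminus\mathcal{A}_0$ exactly two pieces, and (iii) the gluing of those two pieces to the two annuli on each $\h_i$-side globally consistent. Each of (i)--(iii) is plausible on its own, but you do not verify that one choice achieves all three; saying it ``reduces to a direct graph-theoretic and topological check'' is not a proof. The clean reason all three hold at once is precisely the tree structure of the dual graph: the boundary of a regular neighborhood of a planar tree is a single Jordan curve (giving (i)), and a Jordan curve separates the plane into exactly two sides (giving (ii) and (iii)). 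Invoking the tree is exactly the paper's argument; without it, your proof is incomplete.
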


\begin{proof}
An intersection of the required zipper system $Z=\{ z_0,z_1,\dots,z_g\}$ with a handle looks as shown in 
Fig.~\ref{Z for DP}.(a): if $a_i$ and $b_i$ are curves 
of $P_a$ and $P_b$ contained in a given handle $\h_i$, $i=1,\dots,g$, than $\h_i$ contains a zipper $z_i$ such that 
$|z_i\cap a_1|=1$ and $|z_i\cap b_1|=1$. The curve $z_0$ visits each of the handles and goes in $\h_i$ along $z_i$.

The condition that  $(P_a,P_b)$ is strictly standard leads to the existence of appropriate $z_0$ outside of handles: to show this
we build a {\it  dual graph} of  $(P_a,P_b)$ substituting each pair of pants by an $Y$-shaped figure and each handle by a point
(see Fig.~\ref{Z for DP}.(b)). Since  $(P_a,P_b)$ is strictly standard we obtain a tree. We consider this tree as a graph drawn on a 
plane $\Pi$.  Denote by $\bar z_0\in \Pi$ the curve doing around the tree. 
Then $z_0$ is built as any curve on $S$ which 
projects to $\bar z_0$ (more precisely, $\bar z_0$ determines the order in which $z_0$
visits the handles of  $(P_a,P_b)$).  

\end{proof}

\begin{figure}[!h]
\begin{center}
\psfrag{a}{\scriptsize $a_i$}
\psfrag{b}{\scriptsize $b_i$}
\psfrag{aa}{\small (a)}
\psfrag{bb}{\small (b)}
\psfrag{z}{\scriptsize $z_i$}
\psfrag{z0}{\scriptsize $z_0$}
\psfrag{h}{\scriptsize $\h$}
\epsfig{file=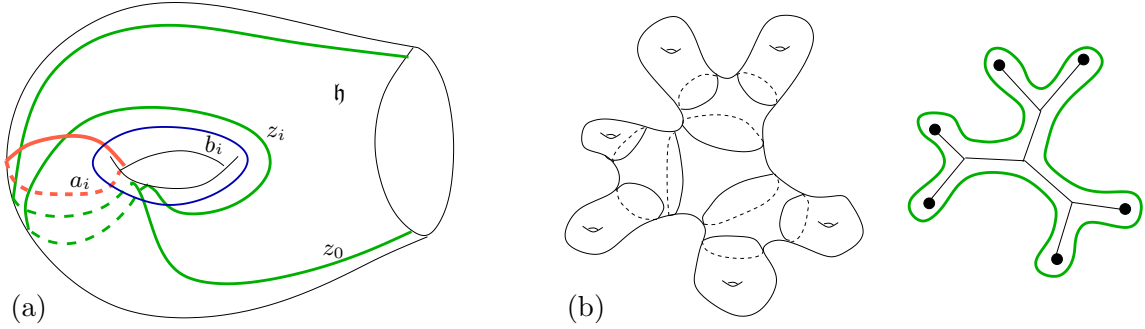,width=0.98\linewidth}
\caption{A zipper system for a given standard double pants decomposition: (a) behavior in a handle; (b) outside of the handles: 
a dual graph of a strictly standard pants decomposition.} 
\label{Z for DP}
\end{center}
\end{figure}

\begin{prop}
Let $(P_a,P_b)$ be a standard double pants decomposition and $Z$ be a zipper system compatible with $(P_a,P_b)$.
Then 
\begin{itemize}
\item
each of the handles of $(P_a,P_b)$ contains exactly one curve of $Z$, 
\item
if $z_0\in Z$ does not belong entirely to any handle then $z_0$  visits each of $g$ handles exactly once.

\end{itemize}
\end{prop}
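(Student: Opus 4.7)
The plan is to analyze $Z$ locally in each handle via Lemma~\ref{compatible zipper}, then invoke the involution $\sigma$ from Remark~\ref{involution} to exclude the one pathological local configuration.

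Fix a handle $\h_j$ cut out by $c_j\in P_a\cap P_b$ and containing the non-regular curve $a_j\in P_a$. Unfolding $\h_j$ along $a_j$ yields a genuine pair of pants with boundary circles $c_j,a_j^+,a_j^-$, and by Lemma~\ref{compatible zipper} the restriction of $Z$ to this pants consists of exactly three arcs forming a triangle: $\alpha$ between $c_j$ and $a_j^+$, $\beta$ between $a_j^+$ and $a_j^-$, and $\gamma$ between $a_j^-$ and $c_j$. Regluing $a_j^+$ to $a_j^-$ identifies the two endpoints of $\alpha,\beta$ on $a_j^+$ with the two endpoints of $\beta,\gamma$ on $a_j^-$, so there are precisely two combinatorial possibilities: (a) the three arcs chain into a single arc from $c_j$ to $c_j$ crossing $a_j$ twice; or (b) $\alpha\cup\gamma$ becomes one arc from $c_j$ to $c_j$ crossing $a_j$ once, while $\beta$ becomes a closed loop crossing $a_j$ once.

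I will rule out~(a) with the involution. By Remark~\ref{involution}, $\sigma$ is an orientation-reversing involution of $S$ fixing $Z$ pointwise and preserving every curve of $P_a$; since $\sigma$ is locally a reflection across $Z$, it swaps the two components $\Sigma_+,\Sigma_-$ of $S\setminus Z$. It also preserves $\h_j$ and $Z\cap\h_j$. In~(a), $Z\cap\h_j$ is a single essential arc (essential because it meets $a_j$), necessarily non-separating in the one-holed torus $\h_j$, so $\h_j\setminus Z$ is a single connected piece contained, as a connected subset of $S\setminus Z$, in one of $\Sigma_\pm$, say $\Sigma_+$. But being the only component, it must be preserved by $\sigma$, while $\sigma$ carries it into $\Sigma_-$ — contradicting $\Sigma_+\cap\Sigma_-=\emptyset$. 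Hence configuration~(b) occurs in every handle; it exhibits exactly one closed zipper loop entirely contained in $\h_j$ together with exactly one arc of an external zipper, proving~(1).

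For~(2), the $g$ handles account for $g$ of the $g+1$ zippers, so precisely one zipper $z_0$ lies outside every handle. Configuration~(b) guarantees exactly one external-zipper arc in each $\h_j$, and since $z_0$ is the only external zipper, all these arcs are pieces of $z_0$. Therefore $z_0$ enters and exits each $\h_j$ through $c_j$ exactly once, and so visits each of the $g$ handles exactly once. The main obstacle is the exclusion of configuration~(a): a direct Euler-characteristic or piece-counting approach is workable but must juggle potentially many zipper curves living in $S\setminus\bigcup_j\h_j$, whereas the involution $\sigma$ bypasses all such case analysis through symmetry.
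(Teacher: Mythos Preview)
Your proof is correct, and it takes a genuinely different route from the paper's. The paper argues by parity and counting: since each principal curve $c_j$ is separating, exactly one zipper meets it; then the paper asserts (via a brief Euler-characteristic remark that ``a pair of pants dissected along a connected curve does not turn into a union of simply-connected components'') that each handle must contain an entire zipper; a count of $g$ handles against $g+1$ zippers finishes both claims. You instead carry out an explicit local analysis of the hexagon decomposition inside each handle, reduce to the two regluing patterns~(a) and~(b), and kill~(a) with the involution~$\sigma$ of Remark~\ref{involution}: since $\sigma$ fixes $Z$ pointwise, preserves $c_j$ and has fixed points in the interior of $\h_j$, it must preserve $\h_j\setminus Z$ while swapping $\Sigma_+$ with $\Sigma_-$, which is impossible when $\h_j\setminus Z$ is a single piece. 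This is a clean substitute for the paper's connectivity step, and arguably more transparent --- the paper's sentence has to be unpacked carefully, whereas your symmetry argument is self-contained once one observes that every essential arc in a one-holed torus is non-separating (the point you use but do not spell out; it follows since cutting along a separating essential arc would have to produce two annuli with one boundary circle each, which is impossible). The counting that finishes part~(2) is then the same in both arguments.
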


\begin{proof}
Let $c$ be a principle curve separating a handle $\h$ in $(P_a,P_b)$. Then each curve on $S$ intersects $c$ even number of times.
By definition of a zipper system compatible with a pants decomposition, $c$ is intersected by exactly one of the curves $z_i$.
To prove that this curve is the same for all handles in $(P_a,P_b)$,
notice that a pairs of pants dissected along a connected curve does not turn into a union of simply-connected components,
which implies that there is a curve $z_j\in Z$, $z_j\ne z_i$  which intersects the handle $\h$. Since $z_j\cap c=\emptyset$, 
$z_j$ is contained in $\h$. Similarly, each of the other handles contains entirely some of the curves from $Z$.
No of these curves may coincide with $z_i$ (since $z_i$ intersects $\h$). Hence, $Z$ consists of $g$ curves
contained in the handles of $(P_a,P_b)$ and one additional curve $z_i$. 
 This proves the first statement of the proposition. 
The second statement follows from the fact that each principle curve in $(P_a,P_b)$ should be intersected by some of $z_i$.  

\end{proof}

\begin{definition}[{{\it Principle zipper, cyclic order}}]
Let $(P_a,P_b)$ be a standard double pants decomposition and $Z=\{ z_0,z_1,\dots,z_g\}$ 
be a zipper system compatible with $(P_a,P_b)$. Suppose that $z_0$ is the curve visiting all handles of $(P_a,P_b)$.
Then we call $z_0$ a {\it principle zipper}. 

A {\it cyclic order} of $Z$ is $[z_1,z_2,\dots,z_g]$ if an orientation of $z_0$ goes from $\h_i$ to $\h_{i+1}$,
where $\h_i$ is the handle containing $z_i$ and $i$ is considered modulo $g+1$ (more precisely, $Z$ decomposes $S$ into two
$(g+1)$-holed spheres $S_+$ and $S_-$, so that we may choose a positive orientation of $z_0$ as one which goes in positive direction
around $S_+$; for a definition of a cyclic order we choose the positive orientation of $z_0$). 

Given two standard double pants decompositions $DP$ and $DP'$ compatible with the same zipper system $Z$.
We say that $DP$ has {\it the same cyclic order} if the cyclic order for $DP$ determined by a positive orientation of $z_0$
coincides with the cyclic order for $DP'$ determined by the same orientation of $z_0$.

\end{definition}

\begin{remark}
The definition of cyclic order depends on the choice of $S_+$ among two subsurfaces. However, the definition of 
{\it the same cyclic order} in two standard double pants decompositions is independent of this choice
(provided that the choice of $S_+$ is the same for both decompositions).

\end{remark}

\begin{prop}
\label{same principle zip}
Let $(P_a,P_b)$ and $(P_a',P_b')$  be two standard pants decompositions compatible with the same zipper system 
$Z=\{ z_0,z_1,\dots,z_g\}$. Suppose that $z_0$ is the principle zipper of $Z$ both for   $(P_a,P_b)$ and $(P_a',P_b')$ 
and the cyclic order of $Z$ is $[z_1,z_2,\dots,z_n]$  both for   $(P_a,P_b)$ and $(P_a',P_b')$.

Then the set of principle curves of $(P_a,P_b)$ coincides with the set of principle curves of $(P_a',P_b')$.

\end{prop}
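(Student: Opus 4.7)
My plan is to characterize each principal curve combinatorially via its intersections with the zipper system, and then use a planar-surface uniqueness argument to conclude that the principal curves of the two decompositions coincide as homotopy classes.

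First, I would cut $S$ along $Z$ to obtain the two $(g+1)$-holed spheres $S_+$ and $S_-$ (with each boundary circle labeled by some $z_j$). The principal curve $c_i$ of $(P_a,P_b)$ bounds the handle $\h_i$, which by the previous proposition contains $z_i$ and no other $z_j$ ($j\ge 1$), while $z_0$ visits $\h_i$ exactly once (crossing $c_i$ transversely at two points). Hence $c_i$ is disjoint from $z_1,\dots,z_g$, meets $z_0$ in exactly two points, and separates the $z_i$-boundary from the other $z_j$-boundaries ($j\ne i$, $j\ge 1$). The same combinatorial description applies to the principal curve $c_i'$ of $(P_a',P_b')$, with the same label $i$ determined by which $z_j$ lies inside the handle.

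Cutting $c_i$ along $Z$ produces two arcs $\alpha_i^\pm\subset S_\pm$, each with both endpoints on the $z_0$-boundary of $S_\pm$ and each separating the $z_i$-boundary from the other $z_j$-boundaries inside $S_\pm$; likewise for $c_i'$. In a planar surface (sphere with holes), a simple arc with endpoints on a prescribed boundary component and realizing a prescribed partition of the remaining boundary components is unique up to isotopy — here isotopies may slide endpoints along the $z_0$-boundary but must fix the other boundary circles setwise. Hence the arc coming from $c_i$ is isotopic in each $S_\pm$ to the arc coming from $c_i'$; gluing back along $Z$ yields that $c_i$ and $c_i'$ are isotopic on $S$, and equality of the two sets of principal curves follows.

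The main technical subtlety is that one must isotope the whole family $\alpha_1^\pm,\dots,\alpha_g^\pm$ \emph{simultaneously} (through disjoint arcs) to match the corresponding family for $(P_a',P_b')$. On the $z_0$-boundary of $S_+$ the $2g$ endpoints appear in a cyclic pattern dictated by disjointness: the two endpoints of each $\alpha_i^+$ must be consecutive, so the endpoints come in $g$ adjacent pairs, and the cyclic order of these pairs around $z_0$ is exactly the cyclic order $[z_1,\dots,z_g]$ of handles visited by $z_0$. The hypothesis that both decompositions share this cyclic order makes the two endpoint configurations combinatorially identical, which is precisely what is needed to upgrade the individual arc isotopies to a single ambient isotopy of $S_+$ (and similarly of $S_-$) fixing every $z_j$-boundary with $j\ge 1$. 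This is the step where the cyclic-order hypothesis is essential: without it each $\alpha_i^\pm$ is still individually unique, but the whole family might fail to admit a simultaneous disjoint isotopy to the primed family.
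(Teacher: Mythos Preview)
Your argument is correct and takes essentially the same route as the paper's proof: cut $S$ along $Z$ into the two planar pieces $S_\pm$, observe that each principal curve $c_i$ yields an arc in each piece separating the $z_i$-boundary from the rest, and invoke uniqueness of such a separating arc in a holed sphere. The paper streamlines the gluing step by first using the cyclic-order hypothesis to arrange $z_0\cap c_i=z_0\cap c_i'$ as point sets on $z_0$, and then argues that the arcs in $S_+$ and $S_-$ are isotopic \emph{rel these fixed endpoints}; this sidesteps the endpoint-matching issue you raise. One remark: the conclusion is merely equality of the two \emph{sets} of isotopy classes, so it suffices to show each $c_i$ is individually isotopic to $c_i'$; the simultaneous disjoint isotopy of the whole family that you worry about in your last paragraph is not actually required for this statement.
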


\begin{proof}
 Let $c_1,\dots,c_g$ be the principle curves of $(P_a,P_b)$ and  $c_1',\dots,c_g'$ be the principle curves of $(P_a',P_b')$.  
Since the cyclic order of $Z$ is the same for both double pants decompositions, we may assume that $z_0\cap c_i=z_0\cap c_i'$.
Let $S^+$ and $S^-$ be  the connected components of $S\setminus Z$.
Then $c_i\cap S^+$ separates from $S^+$ an annulus containing $z_i$ as a boundary component. 
Clearly, the same holds for $c_i\cap S^-$ as well as for $c_i'\cap S^+$ and $c_i'\cap S^-$. 
This implies that $c_i$ is homotopy equivalent to $c_i'$ (more precisely, there exists an isotopy of $c_i$ to $c_i'$ with the fixed points
$c_i\cap z_0=c_i'\cap z_0$).

\end{proof}

%

\begin{cor}
\label{cor}
In assumptions of Proposition~\ref{same principle zip}, $(P_a,P_b)$ may be transformed to $(P_a',P_b')$ 
by morphisms of $\ADP_{g,0}$ preserving the principle curves of the standard pants decomposition.

\end{cor}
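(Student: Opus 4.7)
The plan is to exploit the common decomposition of $S$ into standard pieces provided by Proposition~\ref{same principle zip}. By that proposition, the two decompositions $(P_a,P_b)$ and $(P_a',P_b')$ share the same principal curves $c_1,\dots,c_g$; cutting $S$ along these produces the same $g$ handles $\h_1,\dots,\h_g$ and the same central $g$-holed sphere $S_0\simeq S_{0,g}$ for both decompositions, and the two double pants decompositions restrict separately to each piece. The strategy is to transform them piece-by-piece while never moving the $c_i$.

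First, treat the handles. For each $i=1,\dots,g$, apply Lemma~\ref{trans on handle}: there is a sequence of handle twists inside $\h_i$ taking $(P_a,P_b)|_{\h_i}$ to $(P_a',P_b')|_{\h_i}$. Each such handle twist is a morphism of $\ADP_{g,0}$ (the common principal curve $c_i$ still bounds the handle $\h_i$ so the hypotheses of Definition~\ref{Dehn} remain satisfied), and by construction it leaves everything outside $\h_i$ untouched. Concatenating these sequences over $i=1,\dots,g$ produces a morphism of $\ADP_{g,0}$ taking $(P_a,P_b)$ to some intermediate $(\widetilde P_a,\widetilde P_b)$ that agrees with $(P_a',P_b')$ on every handle and still has $c_1,\dots,c_g$ as its principal curves.

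Second, treat the central sphere. The restrictions $\widetilde P_a|_{S_0}$ and $P_a'|_{S_0}$ are two pants decompositions of $S_0\simeq S_{0,g}$ with the same boundary $c_1\cup\dots\cup c_g$; by Lemma~\ref{g=0} they are connected by a sequence of flips, none of which can touch a boundary curve. Performing these flips in the component $P_a$ of the double pants decomposition of $S$, and then the analogous flip sequence for the $P_b$-component, yields the desired morphism of $\ADP_{g,0}$. The composition of all these operations sends $(P_a,P_b)$ to $(P_a',P_b')$ and preserves $c_1,\dots,c_g$ throughout.

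The only point requiring care is that every intermediate decomposition must itself be admissible, so that each flip or handle twist truly counts as an elementary morphism of $\ADP_{g,0}$. This, however, is automatic: the principal curves $c_1,\dots,c_g$ lie in $P_a\cap P_b$ at every stage of the construction, so each intermediate double pants decomposition is standard, hence admissible. Thus no genuine obstacle arises; the main conceptual step is simply the observation that fixing the principal curves reduces the problem to the already-handled cases of a single handle (Lemma~\ref{trans on handle}) and a punctured sphere (Lemma~\ref{g=0}).
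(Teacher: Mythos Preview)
Your proof is correct and follows exactly the approach of the paper: the paper's own proof is the single sentence ``This follows from Proposition~\ref{same principle zip}, Lemma~\ref{trans on handle} and Lemma~\ref{g=0},'' and you have simply unpacked this into the natural piece-by-piece argument (handles via Lemma~\ref{trans on handle}, central sphere via Lemma~\ref{g=0}). Your added remark that every intermediate decomposition remains standard (hence admissible) is a welcome clarification but not a departure from the intended argument.
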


\begin{proof}
This follows from  Proposition~\ref{same principle zip}, Lemma~\ref{trans on handle} and Lemma~\ref{g=0}.
\end{proof}

Corollary~\ref{cor} implies that a standard pants decomposition is determined 
(modulo action of morphisms of $\ADP_{g,0}$)  by the set of principle curves. Thus, it makes sense to consider a set of principle curves
itself, independently of remaining curves in the corresponding double pants decomposition.

\begin{definition}
A zipper system $Z$ is {\it compatible with a set of principle curves} if $Z=\{ z_0,z_1,\dots,z_g\}$ where
$z_0$ visits each handle exactly once and each of the handles contain exactly one of $z_i$, $i=1,\dots,g$. 

\end{definition}

In other words, $Z$ is compatible with a set of principle curves if and only if it is compatible with a standard pants 
decomposition containing this set of principle curves.

\begin{prop}
\label{transposition}
Let $(P_a,P_b)$ and $(P_a',P_b')$  be two standard pants decompositions compatible with the same zipper system 
$Z=\{ z_0,z_1,\dots,z_g\}$. Suppose that $z_0$ is the principle zipper of $Z$ both for   $(P_a,P_b)$ and $(P_a',P_b')$. 
Then $(P_a,P_b)$ is $\DP$-equivalent to $(P_a',P_b')$. 

\end{prop}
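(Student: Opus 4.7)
By Corollary~\ref{cor}, if the cyclic orders of $Z$ induced by $(P_a,P_b)$ and $(P_a',P_b')$ agree, then the two decompositions are $\DP$-equivalent. The strategy is therefore to modify $(P_a,P_b)$ by morphisms of $\ADP_{g,0}$ preserving compatibility with $Z$ (and keeping $z_0$ as principle zipper) until its cyclic order matches that of $(P_a',P_b')$, and then to invoke Corollary~\ref{cor}.

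Any two cyclic orders of $\{z_1,\dots,z_g\}$ along $z_0$ differ by a permutation, and every permutation of a cyclic sequence is a product of transpositions of cyclically adjacent elements. So it suffices to realize a single adjacent swap: exchange the positions of two handles $\h_i$ and $\h_{i+1}$ that appear consecutively along $z_0$ (i.e.\ interchange which of the two zippers $z_i,z_{i+1}$ is visited first by $z_0$), while leaving the rest of the cyclic order undisturbed.

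To realize such a swap, localize to a genus-$2$ subsurface $\Sigma\subset S$, taken as a regular neighborhood of $\h_i\cup\h_{i+1}$ together with the arc of $z_0$ connecting them. The subsurface $\Sigma$ has a single boundary component; capping it off yields a closed genus-$2$ surface $\widehat\Sigma$ on which the restrictions of $(P_a,P_b)$ and $Z$ produce a standard double pants decomposition compatible with a three-curve zipper system. Theorem~\ref{transitivnost na S_2} applied to $\widehat\Sigma$ provides a sequence of flips and handle twists that exchanges the two handles in the cyclic order. When this sequence is chosen to have support disjoint from the capping disk, it lifts to a morphism of $\ADP_{g,0}$ supported in $\Sigma$, realizing the desired transposition without altering the rest of $(P_a,P_b)$ or the intersection pattern with $Z$ outside $\Sigma$.

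The main obstacle is exactly this localization: one must verify that among the $\ADP_{2,0}$-sequences in $\widehat\Sigma$ that swap the two handles, at least one can be chosen to have support in the interior of $\Sigma$, i.e.\ to fix a collar of the capping disk. This is where the explicit structure from the proof of Theorem~\ref{transitivnost na S_2} must be invoked: the swap has to be assembled from handle twists inside $\h_i$ and $\h_{i+1}$ together with flips supported in the tube around the connecting arc of $z_0$, all chosen to leave $\partial\Sigma$ pointwise fixed. Once this local realization is established, iterating over a chain of adjacent transpositions brings the cyclic order of $(P_a,P_b)$ to that of $(P_a',P_b')$ and Corollary~\ref{cor} concludes the proof.
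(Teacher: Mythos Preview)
Your overall strategy matches the paper's: invoke Corollary~\ref{cor} to reduce to the case of differing cyclic orders, and then reduce further to a single transposition of two cyclically adjacent handles. The divergence is in how that adjacent transposition is realized. The paper does it by an explicit, pictorial sequence of elementary morphisms supported near the two handles (handle twists in each handle, then a pair of flips, then a flip in $P_b$, then handle twists again); this is the content of Fig.~\ref{cyclic order}. You instead propose to cap off a genus-$2$ neighborhood $\Sigma$ and invoke Theorem~\ref{transitivnost na S_2} as a black box.

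The localization issue you flag is a genuine gap, not a formality. Theorem~\ref{transitivnost na S_2} produces \emph{some} sequence of flips and handle twists on the closed surface $\widehat\Sigma$, but gives no control whatsoever on the support of the intermediate curves; a priori they may cross the capping disk, in which case the sequence does not lift to $\Sigma\subset S$. Your suggestion to ``invoke the explicit structure from the proof of Theorem~\ref{transitivnost na S_2}'' does not help: that proof proceeds via Hatcher--Thurston, Lemma~\ref{claim2}, and Lemma~\ref{trans on handle}, none of which single out a sequence avoiding a prescribed disk, and none of which produce a handle-swap move. To complete your argument you would have to exhibit, by hand, a short sequence of handle twists and flips supported in $\Sigma$ that swaps the two handles --- which is exactly the content of the paper's Fig.~\ref{cyclic order}. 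In other words, the genus-$2$ theorem does not buy you a shortcut here; the explicit local construction is the proof, and your plan becomes a proof only once that construction is written down.
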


\begin{proof}
By  Proposition~\ref{same principle zip} together with Corollary~\ref{cor} 
the proposition is trivial unless the cyclic order of $Z$ is 
different for the cases of $(P_a,P_b)$ and $(P_a',P_b')$. 
It is shown in Fig.~\ref{cyclic order} that the transposition of two neighboring zippers
$z_i$ and $z_{i+1}$ in the cyclic order of $Z$ may be realized by morphisms of $\ADP_{g,0}$.

In more details,  in  Fig.~\ref{cyclic order}, upper row, left, we show (a part of) a zipper system $Z$ compatible with a set of 
principle curves. In Fig.~\ref{cyclic order}, lower row, left we show a strictly standard double pants decomposition 
$(P _a^{(0)},P_b^{(0)})$ compatible with $Z$ and having principle curves as in the figure above. 
Applying  handle-twists in  curves of $P_a$ (one for each handle) we
obtain the decomposition  $(P _a^{(1)},P_b^{(1)})$ 
(we draw only the front, `` visible'' part of the decomposition, the non-visible part completely repeats it). Applying two flips 
(one for $P_a$ and one for $P_b$) we obtain
 a double pants decomposition $(P_a^{(2)},P_b^{(2)})$, and then after one more flip we obtain  a standard double pants decomposition
$(P_a^{(3)},P_b^{(3)})$. Choosing appropriate curves in the handles of $(P_a^{(3)},P_b^{(3)})$ we turn 
it into a standard double pants decomposition  $(P_a^{(4)},P_b^{(4)})$ compatible with $Z$
 (this is done in the same way as the transformation from  $(P _a^{(0)},P_b^{(0)})$ to  $(P _a^{(1)},P_b^{(1)})$, i.e. 
using handle twists in each of the handles).  In  Fig.~\ref{cyclic order}, upper row, right, we show the zipper system $Z$ together
with the principle curves of  $(P_a^{(4)},P_b^{(4)})$.
Notice that the cyclic order in $Z$ is changed: 
in the initial decomposition (an orientation of) the principle zipper $z_0$ visits first 
the handle containing $z_1$ and then the handle containing $z_2$, while in the final decomposition the same orientation of $z_0$
visits the handles in reverse order. 
So, the transposition of two adjacent (in the cyclic order) handles is realizable by the morphisms of 
$\ADP_{g,0}$.  

Since the permutation group is generated by transpositions of adjacent elements, the proposition is proved.

\end{proof}

\begin{figure}[!h]
\begin{center}
\psfrag{z0}{\scriptsize $z_0$}
\psfrag{z1}{\scriptsize $z_1$}
\psfrag{z2}{\scriptsize $z_2$}
\psfrag{Z}{\small $Z$}
\psfrag{0}{\small  $(P_a^{(0)},P_b^{(0)})$}
\psfrag{1}{\small  $(P_a^{(1)},P_b^{(1)})$}
\psfrag{2}{\small  $(P_a^{(2)},P_b^{(2)})$}
\psfrag{3}{\small  $(P_a^{(3)},P_b^{(3)})$}
\epsfig{file=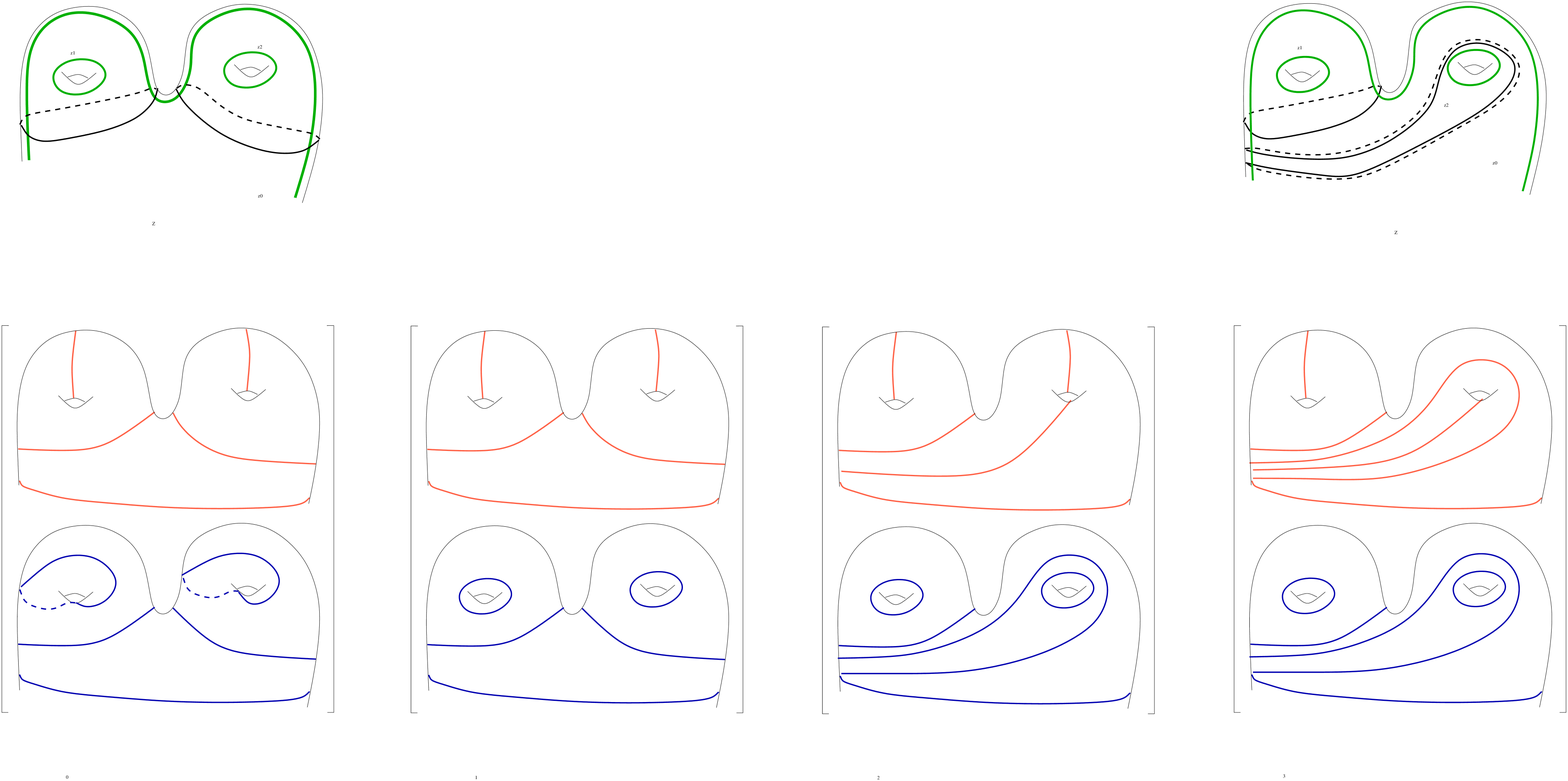,width=0.998\linewidth}
\caption{Transposition in a cyclic order is realizable by morphisms of $\ADP_{g,0}$: apply  handle-twists in each handle, 
then a flip in each pants decomposition, then a flip in $P_b$, and again, handle twists in each of two handles.} 
\label{cyclic order}
\end{center}
\end{figure}

\begin{lemma}
\label{same zip}
Let $(P_a,P_b)$ and $(P_a',P_b')$  be two standard pants decompositions compatible with the same zipper system $Z$. 
Then $(P_a,P_b)$ is $\DP$-equivalent to $(P_a',P_b')$. 

\end{lemma}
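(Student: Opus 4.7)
The plan is to reduce the statement to Proposition~\ref{transposition}, which already establishes the conclusion when $(P_a,P_b)$ and $(P_a',P_b')$ share the same principle zipper in $Z$. Thus the substantive content of the lemma lies in showing that any two principle zippers in $Z$ can be interchanged via morphisms of $\ADP_{g,0}$ while preserving compatibility with $Z$.

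Write $Z = \{z_0, z_1, \ldots, z_g\}$. Recall that for any standard double pants decomposition compatible with $Z$, exactly one zipper visits all handles (the principle zipper) while each of the other $g$ zippers sits inside a single handle. After reducing to strictly standard form via flips in the complement of the handles (using Lemma~\ref{g=0}, which preserves compatibility with $Z$ since the flipped curves lie in the $g$-holed sphere component and can be chosen to meet $z_0$ correctly), the proof reduces to the following \textbf{exchange claim}: if $DP$ is strictly standard compatible with $Z$ with principle zipper $z_0$, then for any $j\in\{1,\dots,g\}$ there exists a strictly standard decomposition $DP'$ compatible with $Z$ with principle zipper $z_j$ such that $DP$ and $DP'$ are $\DP$-equivalent. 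Granted this claim, one applies it to both $(P_a,P_b)$ and $(P_a',P_b')$ to normalize them to a common principle zipper, and then Proposition~\ref{transposition} completes the argument.

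To establish the exchange claim, I would work locally. Let $\h_j$ be the handle of $DP$ cut out by the principle curve $c_j$ and containing $z_j$, and let $\h_k$ be an adjacent handle along $z_0$ (containing $z_k$). The subsurface $\Sigma\subset S$ formed by $\h_j\cup \h_k$ together with an annular collar of the arc of $z_0$ connecting them is a genus-$2$ surface with a single boundary component, and the restriction of $DP$ to $\Sigma$ is a standard genus-$2$ double pants decomposition compatible with the restricted zipper system $\{z_0|_\Sigma, z_j, z_k\}$. A sequence of flips and handle twists on $\Sigma$ entirely analogous to the one displayed in Figure~\ref{cyclic order}---but arranged to interchange $z_0$ and $z_j$ rather than two ``inner'' zippers---produces a new standard decomposition of $\Sigma$ compatible with the same restricted zipper system, now having $z_j|_\Sigma$ as principle zipper. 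Since the construction is supported in $\Sigma$, it extends by the identity to a genuine morphism of $\ADP_{g,0}$; a final realignment of the interior curves in the affected handles via Lemma~\ref{trans on handle} restores strict standard form, and the resulting global decomposition has $z_j$ as principle zipper.

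The main obstacle is constructing the local exchange rigorously: one must exhibit explicit intermediate decompositions and verify at each step that every flip is applied to a regular curve, every handle twist is applied to a shared principle handle, and compatibility with $Z$ is preserved throughout. This combinatorial verification is closely parallel to---but not directly implied by---the one carried out for Proposition~\ref{transposition}, and occupies the bulk of what would be the written proof.
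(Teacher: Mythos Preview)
Your reduction to Proposition~\ref{transposition} is correct, but the local approach to the exchange claim cannot work. Changing the principle zipper is a genuinely global operation: if $z_j$ is to become the principle zipper for a new standard decomposition compatible with the same $Z$, then the fixed curve $z_j$ must visit \emph{all} $g$ handles of that new decomposition. Since $z_j$ lies entirely inside the original handle $\h_j$, every new handle---including those containing the faraway zippers $z_i$ for $i\neq 0,j$---must reach into the region where $z_j$ lives. A construction supported in a genus-$2$ subsurface $\Sigma$ and extended by the identity outside $\Sigma$ leaves the handles $\h_i$ for $i\neq j,k$ untouched, and $z_j$ does not visit them; the resulting global decomposition has no principle zipper at all. (There is also a more immediate issue with the setup: $z_0|_\Sigma$ is an arc with endpoints on $\partial\Sigma$, not a closed curve, so $\{z_0|_\Sigma,z_j,z_k\}$ is not a zipper system for $\Sigma$ in the sense of Definition~\ref{zip}.)

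The paper's argument bypasses explicit sequences of moves entirely. It exhibits a specific standard $(P_a',P_b')$ compatible with $Z$ having a different principle zipper (Figure~\ref{principle zip}), and then observes that in the embedding of $S$ shown there, both $P_a$ and $P_a'$ consist of curves contractible in the inner handlebody, while $P_b$ and $P_b'$ consist of curves contractible in the outer handlebody. Luo's transitivity result (Proposition~\ref{handlebody}) then gives flip-equivalence of $P_a$ to $P_a'$ and of $P_b$ to $P_b'$ separately, which is exactly the $\DP$-equivalence required.
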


\begin{proof}

Suppose that the principle zipper of $Z$ is different for  $(P_a,P_b)$ and $(P_a',P_b')$
(otherwise there is nothing to prove in view of  Proposition~\ref{transposition}).

By  Proposition~\ref{transposition},  
the lemma is trivial if the principle zipper of $Z$ is the same for  $(P_a,P_b)$ and $(P_a',P_b')$. 
So, we only need to show that the morphisms of $\DP_{g,0}$ allow to change the principle zipper in $Z$.
We will show that it may be done by flips only.

Let $Z$ be a zipper system (see Fig.~\ref{principle zip}) compatible with a set $\bar c$ of principle curves.  
Let $(P_a,P_b)$ be a standard double pants decomposition containing this set of principle curves.
Let $\bar c'$ be another set of principle curves compatible with $Z$ and such that $z_0$ is not a principle zipper
(see  Fig.~\ref{principle zip}, down).  
We choose a standard double pants decomposition  $(P_a',P_b')$ with a set of principle curves $\bar c'$, 
see  Fig.~\ref{principle zip} (to keep the figure readable we do not draw the curves of $(P_a',P_b')$ 
decomposing $S\setminus \cap_{i=1}^g \h_g$).
To prove the lemma it is sufficient to show that $P_a$ is flip-equivalent to $P_a'$ and $P_b$ is flip-equivalent to $P_b'$.

\begin{figure}[!h]
\begin{center}
\psfrag{a}{\scriptsize $a_i$}
\psfrag{b}{\scriptsize $b_i$}
\psfrag{z}{\scriptsize $Z,\bar c$}
\psfrag{z1}{\scriptsize $Z,\bar c'$}
\psfrag{1}{\scriptsize $(P_a,P_b)$}
\psfrag{2}{\scriptsize $(P_a',P_b')$}
\psfrag{a}{\small (a)}
\psfrag{b}{\small (b)}
\epsfig{file=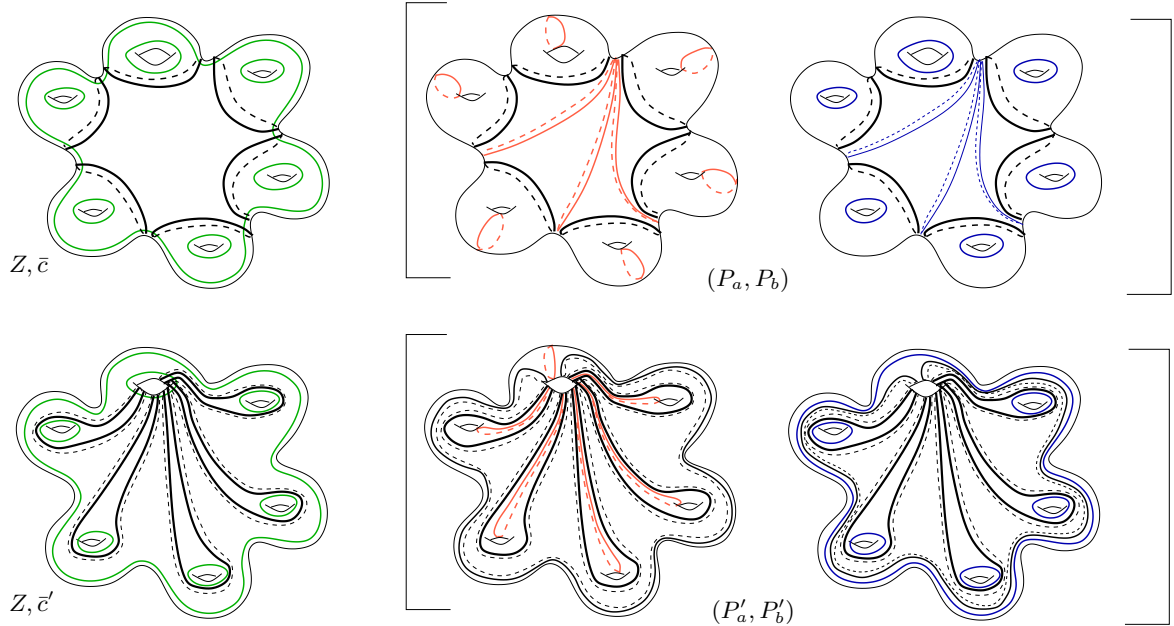,width=0.998\linewidth}
\caption{Principle zipper may be changed by morphisms of $\ADP_{g,0}$.} 
\label{principle zip}
\end{center}
\end{figure}

The fact that $P_a$ is flip-equivalent to $P_a'$ follows from Proposition~\ref{handlebody}, indeed both $P_a$ and $P_a'$ are contractible
in the inner handlebody in the embedding shown in Fig.~\ref{principle zip}.
Similarly, $P_b$ is flip-equivalent to $P_b'$, since both of them are contractible in the outer handlebody,
so the lemma is proved. 

\end{proof}

\subsection{Proof of transitivity in general case}

\begin{lemma}
\label{l twist}
Let $Z$ be a zipper system, let $\sigma$ be an involution preserving $Z$ pointwise and let $c$ be a curve satisfying $|Z\cap c|=2$,
$\sigma(c)=c$. Denote by $T_c$ a Dehn twist along $c$.
Then there exist standard pants decompositions $(P_a,P_b)$ and $(P_a',P_b')$ compatible
with $Z$ and $Z'=T_c(Z)$ respectively and $\DP$-equivalent to each other.

\end{lemma}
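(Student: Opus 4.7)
The strategy is to realize the Dehn twist $T_c$ acting on $Z$ as a single flip inside a well-chosen standard double pants decomposition. Namely, I will build a standard $(P_a,P_b)$ compatible with $Z$ in which $c$ itself appears as a regular curve of $P_a$, and then apply an unzipped flip of $c$ in $P_a$ whose ``twist parameter'' (in the sense of Lemma~\ref{zip for flip}) equals $1$, so that the resulting decomposition is compatible with $T_c(Z)=Z'$. The symmetry $\sigma$ is what makes the initial construction possible, and Lemmas~\ref{flip modulo twist} and~\ref{zip for flip} are what allow the single flip to implement exactly one twist of the zipper system.

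The first step is the construction of $(P_a,P_b)$. The conditions $|Z\cap c|=2$ and $\sigma(c)=c$ together say precisely that $c$ is placed symmetrically with respect to $Z$: cutting $S$ along $Z$ yields two $(g{+}1)$-holed spheres $S_+$ and $S_-$ interchanged (or fixed setwise) by $\sigma$, and $c$ consists of one boundary-to-boundary arc in each. I extend $\{c\}$ to a pants decomposition $P_a$ of $S$ every curve of which meets $Z$ in exactly two points; this may be done by triangulating $S_+$ compatibly with the boundary, duplicating by $\sigma$ on $S_-$, and gluing across $Z$. Next I fix $g$ disjoint simple closed curves $\gamma_1,\dots,\gamma_g$ in $P_a$, each cutting out a handle, chosen disjoint from $c$ and from each other. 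Finally I construct $P_b$ by keeping the $\gamma_i$ as common principle curves, inserting a transverse curve inside each handle, and filling in the remaining subsurface by a $Z$-compatible pants decomposition (which exists by Lemma~\ref{g=0} applied hemisphere-by-hemisphere). The pair $(P_a,P_b)$ is then standard, compatible with $Z$, and has $c\in P_a$ as a regular curve.

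The final step is the flip. By Lemma~\ref{zip for flip}, any unzipped flip $f_c$ of $P_a$ at $c$ is a zipped flip with respect to some $T_c^m(Z)$; by Lemma~\ref{flip modulo twist} the different unzipped flips of $c$ are parametrized by $m\in\Z$ (two such flips differ by a Dehn twist along $c$), and this parameter runs through every integer. Choosing the flip with $m=1$ produces $f_c(P_a)$ compatible with $T_c(Z)=Z'$. Setting $(P_a',P_b'):=(f_c(P_a),P_b)$, the principle curves and the component $P_b$ are unchanged, so $(P_a',P_b')$ is again standard and compatible with $Z'$, while a single unzipped flip of $P_a$ is a morphism of $\ADP_{g,0}$, giving the required $\DP$-equivalence. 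The main obstacle is step one: arranging the principle curves $\gamma_i$ simultaneously disjoint from $c$, compatible with $Z$, and cutting out handles, which requires a brief case analysis depending on whether $c$ is separating or non-separating on $S$; the involution $\sigma$ provides the key ingredient for making consistent choices on the two hemispheres of $S\setminus Z$.
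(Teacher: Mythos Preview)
Your strategy has a genuine gap in the non-separating case. You want a standard $(P_a,P_b)$ in which $c$ appears as a \emph{regular} curve of $P_a$, with principle curves $\gamma_1,\dots,\gamma_g$ disjoint from $c$. But in any standard pants decomposition the regular curves are exactly the principle curves $\gamma_i$ and the curves lying in the residual sphere $S_{0,g}=S\setminus\bigcup_i\h_i$; each $\gamma_i$ separates off a handle, and every curve in $S_{0,g}$ is separating because $S_{0,g}$ is planar. The only non-separating curves in a standard pants decomposition are the ones inside the handles, and those are non-regular, hence cannot be flipped. When $c$ meets two distinct zippers $z_1,z_2$ once each (the second of the two possibilities for $|Z\cap c|=2$), the curve $c$ is non-separating, so your first step is impossible: no choice of the $\gamma_i$ will make $c$ a regular curve of a standard $P_a$. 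The ``brief case analysis'' you allude to cannot rescue this.

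In the separating case your idea is correct and essentially matches the paper, though the paper is cheaper: once one builds a strictly standard $(P_a,P_b)$ compatible with $Z$ and containing $c$, every curve of $(P_a,P_b)$ is either equal to $c$ or disjoint from it, so the \emph{same} decomposition is already compatible with $Z'=T_c(Z)$ and no flip is needed. For the non-separating case the paper instead places the two handles containing $z_1,z_2$ adjacently in the cyclic order of $Z$ and exhibits an explicit short sequence of flips and double $\SS$-moves realizing $T_c$ on that local two-handle picture. A smaller issue in your write-up: for $(f_c(P_a),P_b)$ to be compatible with $Z'$ you need every curve of $P_b$ to be disjoint from $c$ (or equal to it); this holds automatically if $(P_a,P_b)$ is strictly standard, but your description of $P_b$ does not say so.
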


\begin{proof}
We will construct the decomposition $(P_a,P_b)$ so that each curve in it is preserved by $\sigma$.
For simplicity we will draw only one of two connected components $S_+$ and $S_-$ of $S\setminus Z$, 
this is sufficient in view of symmetry. We will draw $S_+$ as a disk with $g$ holes and an outer boundary coming from the principle 
zipper.

Since  $|Z\cap c|=2$,
the curve $c$ either intersects twice the same curve  $z_0\in Z$ or have  single intersections with two distinct curves $z_1,z_2\in Z$.
Consider these two cases.

Suppose that $|c\cap z_0|=2$ and $z_0\in Z$. We will build $(P_a,P_b)$ so that $z_0$ will be  a principle zipper.
Since $\sigma(c)=c$, the curve $c$ decomposes $S_+$ into two parts, 
as in Fig.~\ref{invol}(a) (each part containing at least one hole, otherwise the intersection $c\cap Z$ is not essential). 
We build a strictly standard double pants decomposition $(P_a,P_b)$ containing $c$, 
compatible with $Z$ and such  that $z_0$ is a principle zipper (see Fig.~\ref{invol}(b) for the construction: 
for each of the holes we draw a segment of the principle curves encircling this hole).
We take the second copy of the same pattern as $S_-$ and glue to $S_+$ along the boundary. 
Clearly, we obtain a set of principle curves for a standard double pants decomposition  $(P_a,P_b)$ compatible with $Z$.  
It is easy to see that the same decomposition $(P_a,P_b)$ is also compatible with $Z'=T_c(Z)$.

\begin{figure}[!h]
\begin{center}
\psfrag{c}{\scriptsize $c$}
\psfrag{s}{\small $\sigma$}
\psfrag{a}{\small (a)}
\psfrag{b}{\small (b)}
\epsfig{file=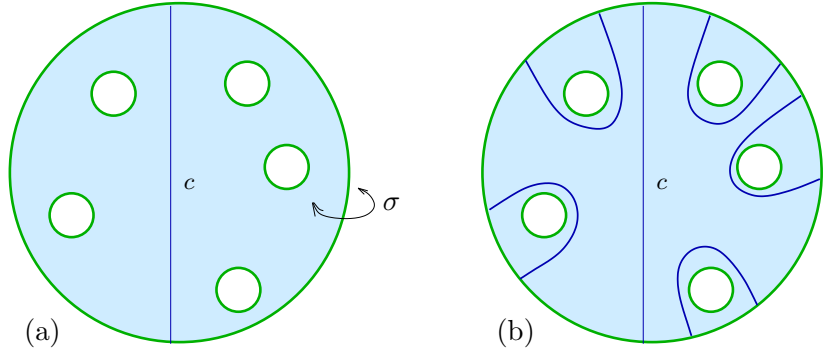,width=0.6998\linewidth}
\caption{ Case $|c\cap z_0|=2$: (a) $c$ decomposes $S_+$; (b) strictly standard double pants decomposition
(only principle curves and $c$ are shown).} 
\label{invol}
\end{center}
\end{figure}

Suppose that $|c\cap z_1|=|c\cap z_2|=1$, $z_1,z_2\in Z$  (see  Fig.~\ref{invol1}(a)). 
Choose $(P_a,P_b)$ so that $z_1$ and $z_2$ are not principle zippers and 
$z_1$ and $z_2$ are two neighboring curves in the cyclic order (see  Fig.~\ref{invol1}(b)). 
Fig.~\ref{obtain twist} contains a sequence of morphisms of $\DP_{g,0}$ taking $(P_a,P_b)$ to
a standard double pants decomposition compatible with $Z'=T_c(Z)$.

\begin{figure}[!h]
\begin{center}
\psfrag{z1}{\scriptsize $z1$}
\psfrag{z2}{\scriptsize $z2$}
\psfrag{c}{\scriptsize $c$}
\psfrag{s}{\small $\sigma$}
\psfrag{a}{\small (a)}
\psfrag{b}{\small (b)}
\epsfig{file=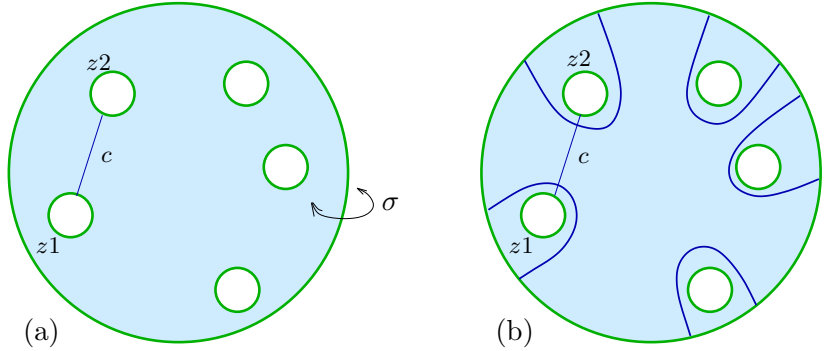,width=0.6998\linewidth}
\caption{ Case $|c\cap z_1|=|c\cap z_2|=1$: (a) $c$ in $S_+$; (b) strictly standard double pants decomposition
(only principle curves and $c$ are shown).} 
\label{invol1}
\end{center}
\end{figure}


\end{proof}

\begin{figure}[!h]
\begin{center}
\psfrag{c}{\scriptsize $c$}
\psfrag{S}{\small $S$}
\epsfig{file=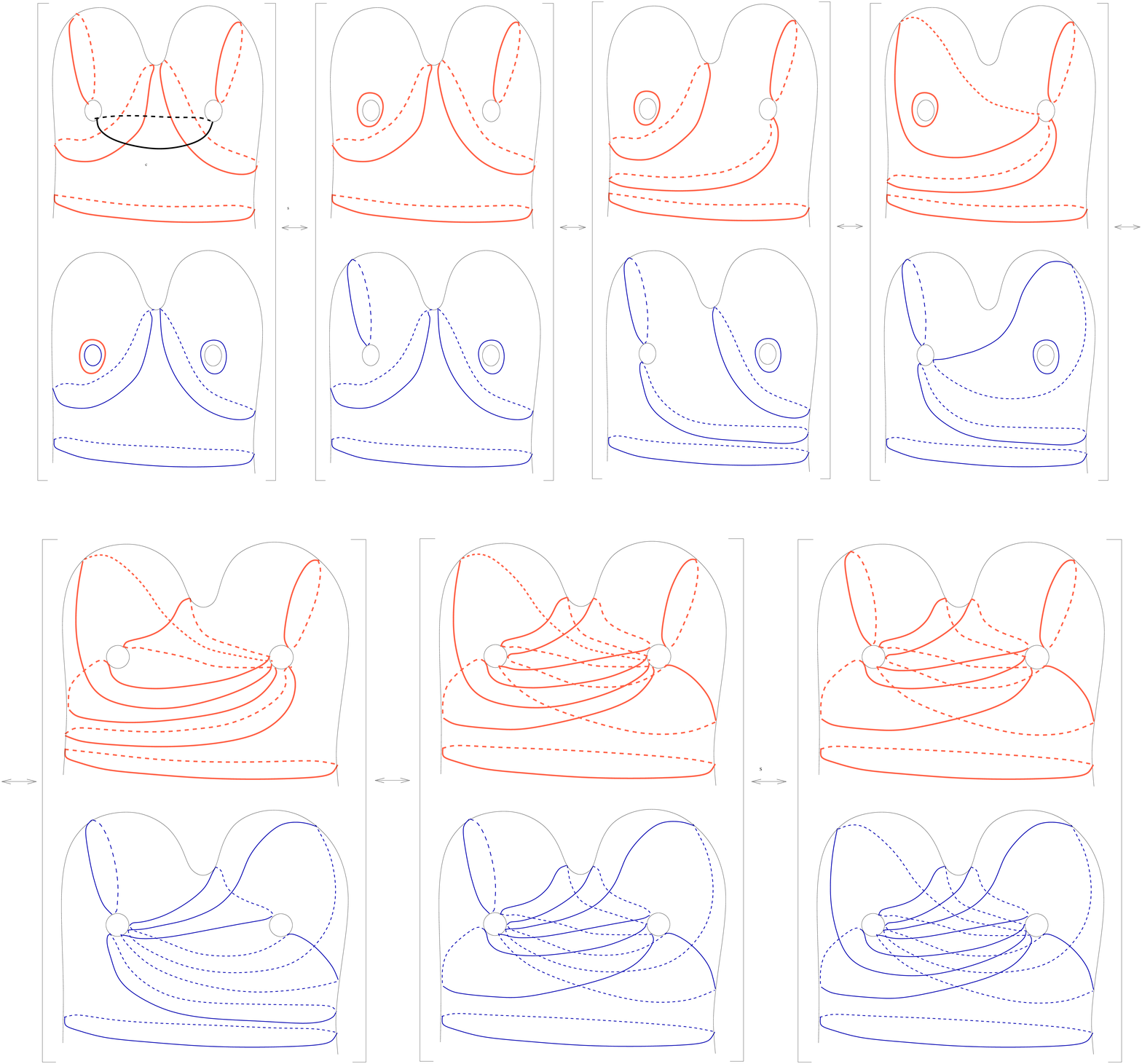,width=0.998\linewidth}
\caption{Twist $T_c$ along $c$ as a composition of elementary morphisms of $\DP_{g,0}$: the steps labeled by ``$S$'' are double $\SS$-moves, 
other steps are compositions of two flips, one in $P_a$ another in $P_b$. 
The final figure coincides with the initial one twisted around $c$.} 
\label{obtain twist}
\end{center}
\end{figure}

Applying Lemma~\ref{l twist} we obtain the following corollary. 

\begin{cor}
\label{cor l twist}
In assumptions of  Lemma~\ref{l twist},
the statement of Lemma~\ref{l twist} holds for $Z$ and $Z''=T_c^m(Z)$ for any positive integer degree $m$.

\end{cor}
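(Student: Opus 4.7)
The natural strategy is induction on $m\ge 1$, with the base case $m=1$ being precisely Lemma~\ref{l twist}. At each inductive step we invoke Lemma~\ref{l twist} applied to a transported triple, and then glue to the previous stage using Lemma~\ref{same zip}, which guarantees $\DP$-equivalence between any two standard pants decompositions compatible with the same zipper system.

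More precisely, assume inductively that there are standard pants decompositions $(P_a,P_b)$ and $(Q_a,Q_b)$ compatible with $Z$ and $Z_{m-1}:=T_c^{m-1}(Z)$ respectively, which are $\DP$-equivalent. To apply Lemma~\ref{l twist} to $(Z_{m-1},\sigma_{m-1},c)$ with $\sigma_{m-1}:=T_c^{m-1}\,\sigma\, T_c^{-(m-1)}$, I would verify three things: (i) $\sigma_{m-1}$ is an involution preserving $Z_{m-1}$ pointwise, which follows from $\sigma_{m-1}(T_c^{m-1}(z))=T_c^{m-1}(\sigma(z))=T_c^{m-1}(z)$ for every $z\in Z$; (ii) $T_c$ fixes $c$ setwise so $|Z_{m-1}\cap c|=|Z\cap c|=2$; and (iii) $\sigma_{m-1}(c)=T_c^{m-1}\sigma(c)=T_c^{m-1}(c)=c$. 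With these checks, Lemma~\ref{l twist} produces standard decompositions $(R_a,R_b)$ and $(R_a',R_b')$ compatible with $Z_{m-1}$ and $T_c(Z_{m-1})=T_c^m(Z)$ respectively, which are $\DP$-equivalent.

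To chain everything together, observe that both $(Q_a,Q_b)$ and $(R_a,R_b)$ are standard and compatible with $Z_{m-1}$, so Lemma~\ref{same zip} yields $(Q_a,Q_b)\sim (R_a,R_b)$. Composing equivalences gives
\[
(P_a,P_b)\;\sim\;(Q_a,Q_b)\;\sim\;(R_a,R_b)\;\sim\;(R_a',R_b'),
\]
where the outer two decompositions are compatible with $Z$ and $T_c^m(Z)$, closing the induction.

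The main thing to watch is the inductive hypothesis check: Lemma~\ref{l twist} is formulated only for a single twist, so one must confirm that its hypotheses genuinely propagate under conjugation by $T_c^{m-1}$. Once the symmetry $\sigma_{m-1}$ and the invariance of $c$ are in place, the rest is a straightforward concatenation of $\DP$-equivalences, with Lemma~\ref{same zip} doing the work of bridging the two constructions at each intermediate zipper system $Z_{m-1}$.
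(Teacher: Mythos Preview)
Your proof is correct and follows the same approach the paper intends by its one-line ``Applying Lemma~\ref{l twist} we obtain the following corollary'': iterate the lemma along the chain $Z, T_c(Z), \dots, T_c^m(Z)$. Your explicit verification that the conjugated involution $\sigma_{m-1}=T_c^{m-1}\sigma T_c^{-(m-1)}$ and the curve $c$ again satisfy the hypotheses of Lemma~\ref{l twist} is exactly the point one must check, and your use of Lemma~\ref{same zip} to splice the stages together is a clean way to do the bookkeeping.

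One minor remark: if you look inside the proof of Lemma~\ref{l twist} rather than using only its statement, the bridging via Lemma~\ref{same zip} is not strictly needed. In the case $|c\cap z_0|=2$ the constructed decomposition $(P_a,P_b)$ actually contains $c$ as a common curve, so $T_c$ fixes it and the \emph{same} decomposition is compatible with every $T_c^m(Z)$. In the case $|c\cap z_1|=|c\cap z_2|=1$, the sequence of morphisms in Fig.~\ref{obtain twist} carries $(P_a,P_b)$ to its $T_c$-image, so simply repeating that sequence $m$ times (each time transported by the appropriate power of $T_c$) lands on $T_c^m((P_a,P_b))$, compatible with $T_c^m(Z)$. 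This is presumably what the paper's terse sentence has in mind. Your argument has the advantage of working from the statement of Lemma~\ref{l twist} alone.
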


\begin{definition}[{{\it $\DP$-equivalent standard pants decompositions}}]
A standard pants decomposition $P_a$ is {\it $\DP$-equivalent} to a standard pants decomposition $P_a'$ if
there exist standard pants decompositions $P_b$ and $P_b'$ such that $(P_a,P_b)$ and $(P_a',P_b')$ are  standard and 
$\DP$-equivalent to each other.

\end{definition}

\begin{theorem}
\label{trans}
Let $S=S_{g,0}$ a surface without marked points. Then
morphisms of $\ADP_{g,0}$ act transitively on the objects of $\ADP_{g,0}$.

\end{theorem}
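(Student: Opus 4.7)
The plan is to extend the $g=2$ argument of Theorem~\ref{transitivnost na S_2} to arbitrary genus. By the definition of admissibility it suffices to establish $\DP$-equivalence of standard double pants decompositions, and by Lemma~\ref{g=0} applied in the sphere complement of the handles, every standard DPD is further $\DP$-equivalent to a strictly standard one. So the task reduces to proving $\DP$-equivalence of any two strictly standard decompositions $(P_a,P_b)$ and $(P_a',P_b')$.

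Fix compatible zipper systems $Z$ and $Z'$ from Proposition~\ref{compatible zip}. By Lemma~\ref{same zip}, whenever two standard DPDs share a compatible zipper system, they are $\DP$-equivalent. So it suffices to chain $Z$ to $Z'$ by a sequence of moves each of which lifts to a $\DP$-equivalence on compatible standard DPDs. The natural way to do this is to invoke Theorem~\ref{teacher} to obtain a sequence $\psi=\psi_k\circ\cdots\circ\psi_1$ of flips and $\SS$-moves taking $P_a$ to $P_a'$, and, by Lemma~\ref{s standard}, arrange that every $\SS$-move is applied at a standard pants decomposition.

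Now process $\psi$ step by step. For each flip $\psi_i$, Remark~\ref{adjacent zips} produces a compatible zipper system $Z_i$ for the current pants decomposition such that $\psi_i$ is a zipped flip with respect to $Z_i$, with successor zipper system $Z_{i+1}=T_{c_i}^{m_i}(Z_i)$, where $c_i$ is the flipped curve and $|c_i\cap Z_i|=2$. Lemma~\ref{l twist} (together with Corollary~\ref{cor l twist}) then produces $\DP$-equivalent standard DPDs compatible with $Z_i$ and $Z_{i+1}$, and Lemma~\ref{same zip} allows us to replace these by any standard DPDs compatible with the corresponding zipper systems; chaining across the flips in a block yields a $\DP$-equivalence between standard DPDs associated to the zipper systems at the two ends of the block. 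For each $\SS$-move $\psi_i$ at a standard decomposition, Lemma~\ref{trans on handle} realizes it as a morphism of $\ADP_{g,0}$ acting only inside the affected handle. Concatenating all these steps gives a morphism of $\ADP_{g,0}$ carrying $(P_a,P_b)$ to a standard DPD whose first component is $P_a'$, and a final application of Lemma~\ref{trans on handle} inside each handle together with Lemma~\ref{g=0} in the sphere complement then matches the second component to $P_b'$.

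The main obstacle will be the flip step: one must verify that the involution hypothesis of Lemma~\ref{l twist} is satisfied at each flip, and that the intermediate $Z_i$ produced by Remark~\ref{adjacent zips} actually admits a compatible standard DPD (needed in order to apply Lemma~\ref{same zip}). For the first point, Remark~\ref{involution} supplies an orientation-reversing involution $\sigma_i$ of $S$ preserving $Z_i$ pointwise and fixing every curve of the current pants decomposition setwise; since $c_i$ belongs to that pants decomposition, $\sigma_i(c_i)=c_i$, and the hypothesis of Lemma~\ref{l twist} is met. For the second point, one builds a standard DPD compatible with $Z_i$ directly along the lines of the construction in Proposition~\ref{compatible zip}, using $Z_i$ as a guide to select principal curves and fillers. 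Once these two technical checks are in place, concatenating the equivalences completes the proof of transitivity.
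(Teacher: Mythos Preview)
Your proposal is correct and follows essentially the same route as the paper: reduce to (strictly) standard decompositions, connect $P_a$ to $P_a'$ via Hatcher--Thurston, absorb $\SS$-moves at standard stages using Lemma~\ref{trans on handle}, and handle the flip blocks by tracking the accompanying zipper systems via Remark~\ref{adjacent zips} and bridging $Z_i$ to $T_{c_i}^{m_i}(Z_i)$ with Lemma~\ref{l twist}/Corollary~\ref{cor l twist}, invoking Lemma~\ref{same zip} whenever two standard DPDs share a zipper system. The only cosmetic difference is that the paper first eliminates all $\SS$-moves from the sequence before processing flips, and produces the standard decomposition compatible with each $Z_i$ by zipped flips from the current $P_a^j$ rather than by reversing Proposition~\ref{compatible zip}; but your two ``technical checks'' (Remark~\ref{involution} for the involution hypothesis, and existence of a compatible standard DPD for each intermediate $Z_i$) are exactly the points the paper's proof hinges on as well.
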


\begin{proof}
It is clear from the definition of admissible pants decomposition that it is sufficient to prove transitivity for standard pants
decompositions only.
By  Lemmas~\ref{trans on handle} and~\ref{g=0}  morphisms of $\DP_{g,0}$ act transitively on standard pants decompositions 
with the same principle curves. 
This implies that it is sufficient to show that any two standard pants decompositions $P_a$ and $P_a'$ are $\DP$-equivalent.

By Theorem~\ref{teacher} there exists a sequence $\{\varphi_i\}$  
 of flips and $\SS$-moves 
taking $P_a$ to $P_a'$ . In view of Lemma~\ref{s in stand} we may assume that in this sequence $\SS$-moves are applied 
only to the standard double pants decompositions. Lemma~\ref{trans on handle} treats the $\SS$-moves in standard pants decompositions,
thus, we may assume that  $\{\varphi_i\}$ consists entirely of flips of $P_a$.

If $Z$ is a zipper system compatible with  $(P_a,P_b)$ and all flips  in $\{\varphi_i\}$
are zipped flips (with respect to $Z$) then there is nothing to prove.
Our idea is to decompose  the sequence $\{\varphi_i\}$ into several subsequences 
$$\{\varphi_i\}=\{\{\varphi_i\}_1,\dots,\{\varphi_i\}_k \}$$
such that in $j$-th subsequence all flips are zipped flips with respect to the same zipper system $Z_j$. 
Denote by $P_a^{j}$ the pants decomposition obtained from $P_a$ after application of the first $j$ subsequences of flips, $P_a^0=P_a$, 
$P_a^k=P_a'$.
Clearly, $P_a^{j}$ is compatible both with $Z_j$ and $Z_{j+1}$. So, we may use zipped flips (with respect to $Z_j$) to transform   
$P_a^{j}$ to some standard pants decomposition $P_a^{j}(Z_j)$ compatible with $Z_j$.  Similarly, we may transform 
$P_a^{j}$ to some standard pants decomposition $P_a^{j}(Z_{j+1})$ compatible with $Z_{j+1}$ (see Fig.~\ref{pr-trans}).

\begin{figure}[!h]
\begin{center}
\psfrag{0}{\scriptsize $P_a=P_a^0$}
\psfrag{1}{\scriptsize $P_a^1$}
\psfrag{2}{\scriptsize $P_a^2$}
\psfrag{j1}{\scriptsize $P_a^{j-1}$}
\psfrag{j}{\scriptsize $P_a^j$}
\psfrag{k}{\scriptsize $P_a^k=P_a'$}
\psfrag{Z1}{\scriptsize $Z_1$}
\psfrag{Z2}{\scriptsize $Z_2$}
\psfrag{Z3}{\scriptsize $Z_3$}
\psfrag{Zj}{\scriptsize $Z_j$}
\psfrag{Zj1}{\scriptsize $Z_{j-1}$}
\psfrag{Zj11}{\scriptsize $Z_{j+1}$}
\psfrag{P1}{\scriptsize $P_a^1(Z_1)$}
\psfrag{P2}{\scriptsize $P_a^1(Z_2)$}
\psfrag{P3}{\scriptsize $P_a^2(Z_2)$}
\psfrag{P4}{\scriptsize $P_a^2(Z_3)$}
\psfrag{P5}{\scriptsize $P_a^{j-1}(Z_{j-1})$}
\psfrag{P6}{\scriptsize $P_a^{j-1}(Z_j)$}
\psfrag{P7}{\scriptsize $P_a^j(Z_j)$}
\psfrag{P8}{\scriptsize $P_a^j(Z_{j+1})$}
\epsfig{file=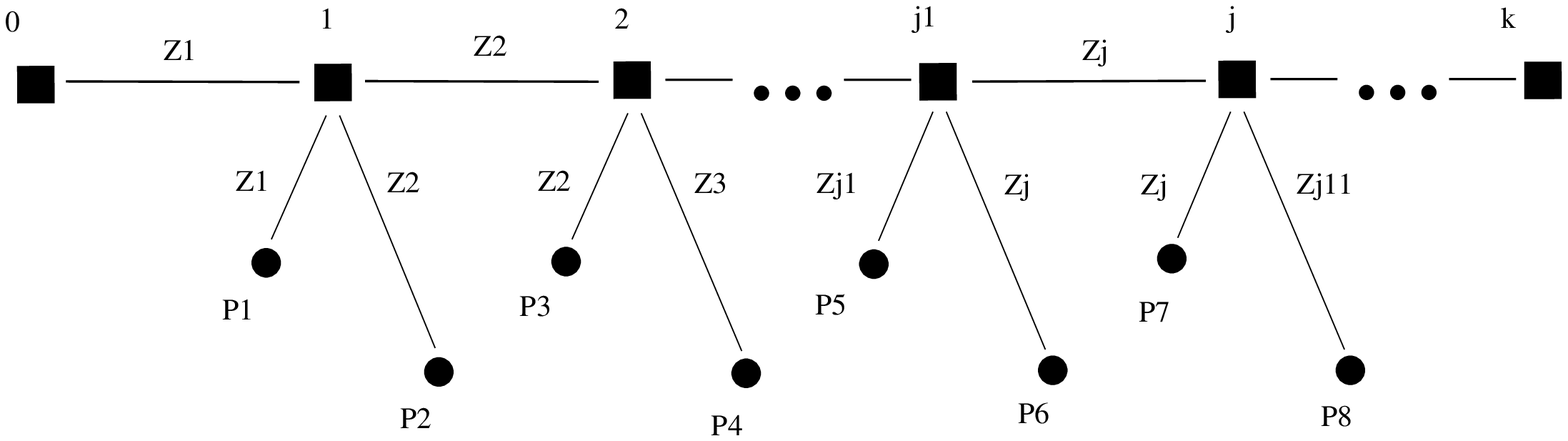,width=0.9798\linewidth}
\caption{Proof of transitivity  for $g>2$.} 
\label{pr-trans}
\end{center}
\end{figure}

Now, for proving $\DP$-equivalence of $P_a$ and $P_a'$ we only need to show two facts:

\begin{itemize}
\item[(1)]  $P_a^{j-1}(Z_j)$  is $\DP$-equivalent to $P_a^{j}(Z_j)$ for $0<j\le k$;
\item[(2)]  $P_a^j(Z)$ is $\DP$-equivalent to $P_a^j(Z_{j+1})$  for $0<j<k$.

\end{itemize}

The first of this facts follows immediately from  Lemma~\ref{same zip} since both  $P_a^{j-1}(Z_j)$ and $ P_a^{j}(Z_j)$
are compatible with the same zipper system $Z_j$.

So, we are left to prove the second fact. By Lemma~\ref{adjacent zips}, we may assume  $Z_{j+1}=T_c^{m}(Z_j)$,
where $T_c$ is a Dehn twist along a curve $c\in P_a^j$. By Remark~\ref{involution}, there exists an involution $\sigma$ 
preserving $Z$ pointwise and such that $\sigma(c_i)=c_i$ for each curve $c_i\in P_a$, so we may apply Lemma~\ref{l twist}.
By Lemma~\ref{l twist} (together with Corollary~\ref{cor l twist}) there exists a pair of $\DP$-equivalent 
double pants decompositions, one  compatible with $Z_{j}$ and another compatible with $Z_{j+1}$. In view of 
 Lemma~\ref{same zip}, this implies that $P_a^j(Z)$ is $\DP$-equivalent to $P_a^j(Z_{j+1})$.

\end{proof}




\section{Surfaces with marked points}
\label{s open}

In this section we generalize Theorem~\ref{trans} and Theorem~\ref{top} to the case of surfaces with boundary
or for surfaces with marked points.

\begin{remark}
\label{marked}
We prefer to work with surfaces with holes instead of surfaces with marked points (the latter may be obtained from the former by
contracting the boundary components).
 Since we never consider the boundary and the neighbourhood or the boundary, this makes no difference for our reasoning. 
In case of marked surfaces one need to extend the definition of a ``pair of pants'': for marked surfaces a pair of pants
is a sphere with 3 ``features'', each of the features may be either a hole or a marked point.
\end{remark}

Let $S_{g,n}$ be a surface of genus $g$ with $n$ holes. Definitions of pants decomposition and double pants decomposition
remain the same as in case of $n=0$.

\begin{definition}[{{\it  Standard double pants decomposition of an open surface}}]
A double pants decomposition $(P_a,P_b)$ of $S_{g,n}$ is {\it standard} if $P_a$ and $P_b$ contain the same set of $g$ handles
(where a {\it handle} is a surface of genus 1 with 1 hole).

A standard  double pants decomposition is {\it strictly standard} if any curve of  $(P_a,P_b)$  either belongs to both of
$P_a$ and $P_b$ or is contained in some of $g$ handles.

\end{definition}

In the same way as in case of $n=0$  we define: flips and handle twists, admissible double pants decompositions and the category
$\ADP_{g,n}$ of admissible pants decompositions. We will consider objects of $\ADP_{g,n}$  as surfaces with holes, but contracting the 
boundaries of the surface we obtain the equivalent category whose objects are surfaces with marked points.

Before proving the transitivity of morphisms on the objects of $\ADP_{g,n}$, we reprove Theorem~\ref{teacher}
for the case of open surfaces (the result is contained in~\cite{H1}, but  we present the prove here, since
the same idea works for  double pants decompositions).  
More precisely, we derive the result for open surfaces from the result for closed ones.

Given a pants decomposition of an open surface, $\SS$-moves are defined in the same way as for closed surfaces.

\begin{lemma}[A.~Hatcher,~\cite{H1}]
\label{open1}
Flips and $\SS$-moves act transitively on pants decompositions of  $S_{g,n}$.

\end{lemma}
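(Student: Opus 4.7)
The plan is to deduce the open--surface version from the closed--surface case of Theorem~\ref{teacher} (with $n=0$) by capping off all the holes, with an eye towards re-using the same construction for double pants decompositions in the next section. Concretely, I attach a one-holed torus $H_i$ to each boundary circle $\partial_i$ of $S=S_{g,n}$, producing a closed surface $\hat S = S_{g+n,0}$, and fix inside each cap $H_i$ a non-separating simple curve $\mu_i$, so that $\{\partial_i,\mu_i\}$ is a pants decomposition of $H_i$. Any pants decomposition $P$ of $S$ extends canonically to a pants decomposition
$$\hat P := P \cup \bigcup_{i=1}^{n}\{\partial_i,\mu_i\}$$
of $\hat S$, and flips or $\SS$-moves of $P$ on $S$ lift to flips or $\SS$-moves of $\hat P$ on $\hat S$.

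Given two pants decompositions $P,P'$ of $S$, I extend them using the same caps to $\hat P,\hat P'$ and apply Theorem~\ref{teacher} in the closed case to produce a sequence $\hat P=\hat P_0\to\dots\to\hat P_k=\hat P'$ of flips and $\SS$-moves on $\hat S$. The objective is then to replace this sequence by a \emph{normalised} one in which every intermediate pants decomposition still contains each $\partial_i$. Once such a normalisation is achieved, every move is either entirely internal to $S$, in which case it restricts to a flip or $\SS$-move of the corresponding pants decomposition of $S$, or entirely internal to some $H_i$. Moves of the latter kind can be removed in cancelling pairs, since $\hat P$ and $\hat P'$ agree on each $H_i$ and the pants complex of the one-holed torus under flips and $\SS$-moves is trivially connected. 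Concatenating the restrictions to $S$ then gives the required sequence of flips and $\SS$-moves from $P$ to $P'$.

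The main obstacle is precisely the normalisation step, i.e.\ showing that any move which interacts with some $\partial_i$ can be rewritten as a composition of a pure-$S$ move and a pure-$H_i$ move. The key control is furnished by the definition of a flip: the new curve $c'$ satisfies $|c\cap c'|=2$ and $c'\cap d=\emptyset$ for every other $d\in\hat P$; in particular $c'$ is automatically disjoint from $\partial_j$ for all $j$ such that $\partial_j$ is not adjacent to $c$ in $\hat P$. I would then treat the three hazardous cases separately: a flip of $\partial_i$ itself is replaced by the combination of a handle twist in $H_i$ (which rearranges $\mu_i$) together with a flip inside $S$ achieving the same net effect on $\hat P$ while leaving $\partial_i$ fixed; a flip of a curve $c\in P$ whose replacement $c'$ crosses $\partial_i$ is rewritten as a $\partial_i$-preserving flip inside $S$ followed by a corrective flip or handle twist of $\mu_i$ inside $H_i$, using that such a $c'$ decomposes across $\partial_i$ into an arc in $S$ and an arc in $H_i$; and an $\SS$-move involving a pair of pants adjacent to $\partial_i$ is handled analogously, since the only curve in $H_i$ on which an $\SS$-move is relevant is $\mu_i$ itself. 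The case analysis producing these rewriting rules is the technical heart of the argument; once it is in place, restriction of the normalised sequence to $S$ concludes the proof.
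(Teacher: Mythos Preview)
Your approach differs from the paper's, and the normalisation step contains a genuine gap rather than merely deferred bookkeeping. Starting from a decomposition of $\hat S$ that contains $\partial_i$, the only move that can destroy $\partial_i$ is a flip of $\partial_i$ itself (any other curve is either inside $S$ or is the non-regular curve $\mu_i$, and flipping a curve $c\ne\partial_i$ produces $c'$ disjoint from $\partial_i$). But a flip of $\partial_i$ lands on a decomposition that does \emph{not} contain $\partial_i$, so it cannot be reproduced by any composition of $\partial_i$-preserving moves; your proposed replacement ``handle twist in $H_i$ together with a flip inside $S$'' cannot have the same target. (Note also that ``handle twist'' is a move on \emph{double} pants decompositions; here the only move available inside $H_i$ is an $\SS$-move on $\mu_i$.) If instead you try to rewrite the whole excursion during which $\partial_i$ is absent, you are asking that the subcomplex of pants decompositions of $\hat S$ containing all the $\partial_i$ be connected. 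That subcomplex is exactly the product of the pants complex of $S$ with the (connected) $\SS$-move graphs of the $H_i$, so its connectivity is equivalent to the statement you are proving, and the argument is circular without an extra ingredient such as simple connectivity of the full pants complex.

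The paper sidesteps this by capping a \emph{single} hole with a \emph{disk} and inducting on $n$. Filling the $(k+1)$-th hole of $S_{g,k+1}$ collapses the unique pair of pants carrying that hole, so a pants decomposition of $S_{g,k+1}$ projects to one of $S_{g,k}$; conversely one recovers a decomposition upstairs from one downstairs by choosing a curve $c$ and inserting a thin ``holed strip'' along it. A single flip in $S_{g,k+1}$ (Fig.~\ref{hole}) slides this strip to an adjacent curve, hence by iteration to any curve, and every flip or $\SS$-move on $S_{g,k}$ lifts to $S_{g,k+1}$ once the strip has been moved off the curve involved. Transitivity for $S_{g,k}$ then yields transitivity for $S_{g,k+1}$ with no normalisation step at all. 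Capping with a disk rather than a torus is precisely what makes the upstairs/downstairs correspondence between moves immediate, and it is this same construction that the paper reuses for double pants decompositions in Theorem~\ref{open}.
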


\begin{proof}
The proof is by induction on the number of holes $n$. The base ($n=0$) is 
Theorem~\ref{teacher} for the case of closed surfaces.
Suppose that the lemma holds for $n=k$ and consider a surface $S_{g,k+1}$. 

We consider simultaneously two surfaces, $S_{g,k+1}$ and $S_{g,k}$, where the latter is thought as a copy of $S_{g,k+1}$ 
with a disk attached to the boundary of $(k+1)$-th hole. Each curve on  $S_{g,k+1}$ turns into a curve on  $S_{g,k}$
(but two distinct curves may became the same). Any pants decomposition of  $S_{g,k+1}$ turns into a pants decomposition of 
 $S_{g,k}$ containing one pair of pants less than the initial ones. More precisely, each pants decomposition  $S_{g,k+1}$ 
contains a unique pair of pants one of whose boundary components is $(k+1)$-th hole. This pair of pants disappears in  $S_{g,k}$ 
(when the hole is removed, two other boundary components turn in the same curve).
To go back from  a pants decomposition of $P_{g,k}$ of $S_{g,k}$ to a pants decomposition of  $S_{g,k+1}$  
we need only to choose one of the curves
$c\in P_{g,k}$ and attach in the place of $c$ a thin strip containing a hole.

A flip as in Fig.~\ref{hole} allow to change the curve $c\in P(S_{g,k})$ where the holed strip as attached (this flip  in the 
decomposition of $S_{g,k+1}$ does not change the decomposition of $S_{g,k}$). Applying a sequence of flips we may move the 
strip to any given curve of the pants decomposition of $S_{g,k}$.
Furthermore, for any flip or $\SS$-move in the 
decomposition of  $S_{g,k}$ we may apply similar transformation in  $S_{g,k+1}$ 
(we only need to check in advance that the holed strip is not 
attached to the curve changed by the transformation, in the latter case, first we need to change the ``stripped'' curve).
So the transitivity of flips and $\SS$-moves on pants decompositions of $S_{g,k+1}$ follows now from transitivity for  $S_{g,k}$
and a fact that flips allow as to choose the stripped curve arbitrary.

\end{proof}

\begin{figure}[!h]
\begin{center}
\epsfig{file=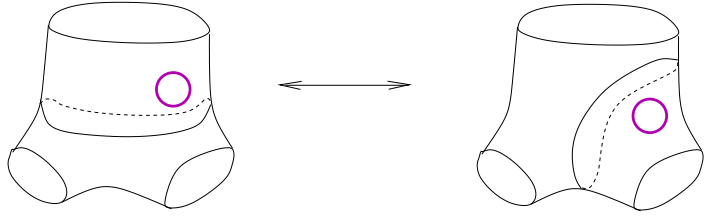,width=0.59798\linewidth}
\caption{Flip changing the curve where the holed strip is attached.} 
\label{hole}
\end{center}
\end{figure}

\begin{definition}[{{Simple double pants decomposition }}]
A double pants decomposition $(P_a,P_b)$ is {\it simple} if $|a_i\cap b_j|\le 1$ for all curves $a_i\in P_a$, $b_j\in P_b$.

\end{definition}

\begin{theorem}
\label{open}
Morphisms of $\ADP_{g,n}$ act transitively on the elements of $\ADP_{g,n}$.

\end{theorem}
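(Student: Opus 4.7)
The plan is to prove Theorem~\ref{open} by induction on the number of holes $n$, in direct parallel with how Lemma~\ref{open1} was derived from its closed-surface analog Theorem~\ref{teacher}. The base case $n=0$ is precisely Theorem~\ref{trans}, already established in Section~\ref{g>2}.

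For the inductive step, assume morphisms of $\ADP_{g,n}$ act transitively on its objects, and let $DP=(P_a,P_b)$, $DP'=(P_a',P_b')$ be two objects of $\ADP_{g,n+1}$. Since every admissible decomposition is flip-equivalent to a standard one, we may assume $DP$ and $DP'$ are standard. In each of $P_a,P_b,P_a',P_b'$ there is a unique pair of pants having the $(n+1)$-th hole as one of its three features; call its two other boundary curves the \emph{stripped pair}. Attaching a 2-disk to the $(n+1)$-th hole collapses this pair of pants to an annulus, making the two stripped curves isotopic on the new surface and producing a pants decomposition with one fewer curve. Applying this projection componentwise produces double pants decompositions $\overline{DP}=(\overline{P_a},\overline{P_b})$ and $\overline{DP'}=(\overline{P_a'},\overline{P_b'})$ on $S_{g,n}$. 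Filling in a hole of $S_{g,n+1}$ amounts to capping off the corresponding marked 2-disks on both handlebodies of the Heegaard splitting, so the splitting of $\S^3$ is preserved, and $\overline{DP}$, $\overline{DP'}$ are both admissible on $S_{g,n}$.

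By the inductive hypothesis there is a sequence $\Phi$ of flips and handle twists in $\ADP_{g,n}$ taking $\overline{DP}$ to $\overline{DP'}$. I would then lift $\Phi$ to $\ADP_{g,n+1}$ step by step. Each handle twist acts inside a handle disjoint from the filled hole and lifts verbatim. A flip of a curve $\overline c$ in, say, $\overline{P_a}$ has a unique preimage $c\in P_a$, and this flip lifts directly whenever $c$ is not one of the two stripped curves of $P_a$. If it is, one first applies, exactly as in the proof of Lemma~\ref{open1}, a preliminary flip in $\ADP_{g,n+1}$ that moves the ``holed strip'' from $c$ to another curve of $P_a$; on $S_{g,n}$ this preliminary flip is invisible, so it does not disturb $\Phi$. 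Flips in $P_b$ are handled symmetrically. Once the full lifted sequence is applied we arrive at a decomposition whose projection to $S_{g,n}$ equals $\overline{DP'}$; a final round of strip-moving flips, performed independently in $P_a$ and in $P_b$, aligns the stripped pairs with those of $DP'$, completing the inductive step.

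The main obstacle is verifying that the auxiliary strip-moving flips are genuine morphisms of $\ADP_{g,n+1}$ and that they afford enough independent freedom in $P_a$ and $P_b$ to realize arbitrary final positions of the two stripped pairs. A flip in $P_a$ alone leaves $P_b$ unchanged, and by Proposition~\ref{handlebody} it preserves the handlebody bounded by the $P_a$-side; the handlebody bounded by $P_b$ is trivially preserved, so the $\S^3$ Heegaard splitting, and hence admissibility, is maintained. The simultaneous independence of the strip-moving in $P_a$ and in $P_b$ is precisely the content of Lemma~\ref{open1} applied separately to each component of the double decomposition, once the pairing $(P_a,P_b)$ has been reduced to the simple form guaranteed by the definition of simple double pants decomposition preceding the theorem. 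Combining these observations yields the lifted sequence and thus transitivity on $\ADP_{g,n+1}$.
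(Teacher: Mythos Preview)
Your overall architecture matches the paper's: induction on $n$ with base case Theorem~\ref{trans}, projection to $S_{g,n}$ by filling the $(n{+}1)$-th hole, lifting a sequence of morphisms from $\ADP_{g,n}$, and a final adjustment of the ``holed strips''. The lifting of individual flips and handle twists is handled essentially as in the paper (with the same caveat you state for flips: before lifting a move that involves the stripped curve, first slide the strip elsewhere; note this applies to handle twists too, not only flips, if the strip happens to sit inside the relevant handle).

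The genuine gap is in your final paragraph. After the lifted sequence you reach some $(Q_a,Q_b)$ on $S_{g,n+1}$ with the same projection $\overline{DP'}$ as $DP'$; you then assert that independent strip-moving in $Q_a$ and $Q_b$ suffices to reach $DP'$. This is exactly where the paper identifies an obstruction. Strip-moving flips let you choose \emph{which curves} $a_i\in\overline{P_a'}$ and $b_j\in\overline{P_b'}$ carry the two strips, but the $(n{+}1)$-th hole lies at an intersection of the two strips, and when $|a_i\cap b_j|>1$ there are several such intersections; strip-moving in $P_a$ and $P_b$ separately does not let you select among them. Your sentence ``reduced to the simple form guaranteed by the definition of simple double pants decomposition'' does not supply a mechanism: the definition defines a property, it does not guarantee it, and reducing to \emph{standard} (as you do at the outset) does not force $\overline{DP'}$ to be simple, since the curves of $P_a'$ and $P_b'$ outside the handles may differ and intersect many times.

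The paper's remedy is precisely this missing step: reduce not merely to standard but to \emph{strictly standard} double pants decompositions. A strictly standard $(P_a',P_b')$ on $S_{g,n+1}$ projects to a strictly standard decomposition of $S_{g,n}$, which is simple (the only intersections are the single crossings $a_i\cap b_i$ inside handles). Then the obstruction vanishes and your strip-moving argument goes through. Since every standard decomposition is $\DP$-equivalent to a strictly standard one (apply Lemma~\ref{g=0} to the $g$-holed sphere outside the handles), transitivity on strictly standard decompositions yields transitivity on all admissible ones.
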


\begin{proof}
The proof is by induction on the number of holes $n$. The base ($n=0$) is proved in 
Theorem~\ref{trans}.
Suppose that the theorem holds for $n=k$ and consider a surface $S_{g,k+1}$. 

Following the proof of Lemma~\ref{open1}, we consider simultaneously double pants decompositions $(P_a,P_b)$
of  $S_{g,k+1}$ and $(\widetilde P_a,\widetilde P_b)$ of  $S_{g,k}$.
Each of two pants decompositions of  $S_{g,k+1}$ differs from corresponding pants decomposition of  $S_{g,k}$ by a holed strip
attached in some of curves (so that $(k+1)$-th hole in  $S_{g,k+1}$ is lying in the intersection of two strips).
The transitivity for the case of  $S_{g,k}$ shows that flips and handle twists are sufficient to transform 
the double pants decomposition of  $S_{g,k+1}$ to one which projects to any given double pants decomposition of  $S_{g,k}$.
As it is shown in the proof of  Lemma~\ref{open1}, flips also allow to choose the curves of  $(\widetilde P_a,\widetilde P_b)$
where the holed strips are attached. 
This implies transitivity of morphisms of $\ADP_{g,n}$ 
on all  admissible double pants decomposition of  $S_{g,k+1}$ which project to simple double pants decomposition
of  $S_{g,k}$.

The reasoning above does not work for   double pants decomposition of  $S_{g,k+1}$ which do not  project to simple double pants 
decompositions of  $S_{g,k}$: indeed, in this case we may choose the double pants decomposition  $(\widetilde P_a,\widetilde P_b)$ 
of  $S_{g,k}$ and the curves $a_i$ and $b_j$ where the strips are attached, but in the case $|a_i\cap b_j|>1$ we are not able to
choose which of the intersections of the strips contains the hole. 

To adjust the proof to this case, notice that a strictly standard double pants decomposition of  $S_{g,k+1}$ projects to a strictly
standard double pants decomposition of  $S_{g,k+1}$, which is simple. This implies transitivity on strictly standard pants decompositions,
and hence, on standard ones. In view of definition of admissible double pants decomposition 
(as one which may be obtained from a standard one), we have transitivity for all admissible double pants decompositions of
 $S_{g,k+1}$. 

Thus, given the statement for $n=k$ we have proved it for $n=k+1$, hence, the theorem holds for any integer $n\ge 0$. 

\end{proof}

%
%

Theorem~\ref{open} 
completes the proof of the Main Theorem.

\section{Flip-twist groupoid and mapping class group}
\label{sec top}

In general, the elements of the flip-twist groupoid $FT$ change the topology of the double pants decomposition, 
so $FT$ is not a group. However, there are 
some elements which preserve the topology. Clearly, these elements belong to mapping class group $MCG(S)$ of the surface
(recall that a {\it mapping class group} $MCG(S)$ of a surface $S$ is a group of homotopy classes 
of self-homeomorphisms of $S$ with a composition as a group operation).
In fact, all elements of mapping class group occur to belong to $FT$.

\medskip

In this section,
we consider the curves of a double pants decomposition as {\it labeled curves}, so that a symmetry of $S$ interchanging the curves 
would not be trivial. 

A double pants decomposition $(P_a,P_b)$ contains a {\it double curve} $c$ if $c\in P_a\cap P_b$. 

\begin{lemma}
\label{ideal}
Let $S=S_{g,n}$ be a genus $g$ surface with $n\ge 0$ marked points, where $2g+n>2$.   
Let $(P_a,P_b)$ be an admissible double pants decomposition without double curves.
Then  
 $g\in MCG(S)$ fixes $(P_a,P_b)$ if and only if $g=id$. 

\end{lemma}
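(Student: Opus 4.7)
The plan is to show that a mapping class $g$ fixing every labelled curve of $(P_a, P_b)$ is necessarily a product of Dehn twists along the curves of $P_a$, and then to use the fact that $g$ also fixes every $b_j \in P_b$, together with the general position of the two Lagrangians and the absence of double curves, to force each twist exponent to vanish. For the first step, since $g$ preserves each $c_i \in P_a$ setwise and orientation-preservingly (after passing to an index-$2$ subgroup if necessary), one may isotope $g$ so that it fixes each $c_i$ pointwise. Cutting $S$ along $P_a$ gives a disjoint union of pairs of pants with labelled boundary, whose mapping class group rel boundary is generated only by the boundary Dehn twists, so $g$ becomes the identity on each piece after the cut. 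Re-gluing, the only residual freedom is a Dehn twist at each seam, so
\begin{equation*}
g = \prod_{c_i \in P_a} T_{c_i}^{n_i}
\end{equation*}
for uniquely determined integers $n_i$, uniqueness being the standard fact that these twists generate a free abelian subgroup of $MCG(S)$.

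For the second step, $g$ also preserves each $b_j \in P_b$. The symplectic action of $T_{c_i}$ on $H_1(S,\Z)$ is $x \mapsto x + \langle x, c_i\rangle\, [c_i]$, so $g_*([b_j]) = [b_j]$ yields
\begin{equation*}
\sum_i n_i \, \langle b_j, c_i \rangle\, [c_i] = 0 \quad\text{in } H_1(S,\Z)
\end{equation*}
for every $j$. Since the classes $[b_j]$ generate $\L_b$, the intersection pairing $\L_a \times \L_b \to \Z$ is perfect (the two Lagrangians are in general position, so $H_1(S,\Z) = \L_a \oplus \L_b$), and the non-separating $[c_i]$ span $\L_a$. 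A dual-basis chase then forces $n_i = 0$ for every non-separating $c_i$. For a separating $c_i$ the twist $T_{c_i}$ is invisible in homology, so I would proceed geometrically: admissibility together with the absence of double curves ensures that some $b_j$ crosses $c_i$ essentially, and a non-zero twist $T_{c_i}^{n_i}$ then wraps the arcs of $b_j$ near $c_i$ in a way that the remaining Dehn twists of $g$ cannot undo, since their supports are curves of $P_a$ disjoint from $c_i$. Comparing the geometric intersection number $i(g(b_j), b_j)$ with $i(b_j, b_j) = 0$ then forces $n_i = 0$ for every separating $c_i$ as well, so $g = \mathrm{id}$.

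The main obstacle I expect is the separating case: the homological argument is essentially formal, but ruling out twists along homologically trivial curves requires a geometric "no cancellation" argument for Dehn twists along pairwise disjoint curves, and it is here that admissibility and the no-double-curves hypothesis are needed to guarantee that $P_b$ detects every separating curve of $P_a$ through an essential transverse intersection.
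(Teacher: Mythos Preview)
Your first step---reducing $g$ to a multitwist $\prod_{c_i\in P_a} T_{c_i}^{n_i}$---is exactly what the paper does, though you supply more detail than the paper bothers with.

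The second step takes an unnecessary detour through homology, and this detour has a genuine gap. The claimed ``dual-basis chase'' cannot force $n_i = 0$ for every non-separating $c_i$: the $3g-3$ classes $[c_i]$ lie in the rank-$g$ lattice $\L_a$ and are far from independent, so the relation $\sum_i n_i \langle b_j, c_i\rangle\, [c_i] = 0$ does not determine the individual $n_i$. Concretely, if $P_a$ contains a bounding pair $c_1, c_2$ (disjoint non-separating curves with $[c_1] = \pm[c_2]$, which does occur in pants decompositions once $g \ge 3$), then $T_{c_1} T_{c_2}^{\mp 1}$ lies in the Torelli group and is invisible to your homological test, yet has $n_1,n_2\neq 0$. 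So the split into ``non-separating: homology'' versus ``separating: geometry'' leaves these Torelli multitwists untreated.

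The paper avoids all of this by running your own geometric fallback uniformly, rather than reserving it for separating curves. Pick any $a_1 \in P_a$ with nonzero exponent; since there are no double curves, $a_1 \notin P_b$, and maximality of $P_b$ then forces some $b_1 \in P_b$ to meet $a_1$ essentially. The standard multitwist intersection formula
\[
i\bigl(g(b_1),\,b_1\bigr) \;=\; \sum_i |n_i|\, i(c_i,b_1)^2 \;>\; 0
\]
(for pairwise disjoint $c_i$) gives $g(b_1)\neq b_1$, contradicting $g(P_b)=P_b$. Neither admissibility nor the Lagrangian general-position hypothesis is invoked; only the no-double-curves assumption and maximality of $P_b$ are used. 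Your final paragraph already contains this argument---just apply it to every curve of $P_a$ at once and discard the homology computation.
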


\begin{proof}

Suppose there exists an element $g\in MCG(S)$ such that $g((P_a,P_b))=(P_a,P_b)$, $g\ne id$.
Since $g(P_a)=P_a$, $g$ is a composition of Dehn twists along the curves contained in $P_a$. Since the curves do not intersect each other,
the Dehn twists do commute. Let $a_1\in P_a$ be any curve whose twist contributes to $g$. By assumption, $a_1\notin P_b$.
Hence, there exists a curve $b_1\in P_b$ such that $b_1\cap a_1\ne \emptyset$ 
(otherwise $P_b$ is not a maximal set of non-intersecting curves in $S$).
Then $g(b_1)\ne b_1$, so $g(P_b)\ne P_b$
which contradicts to the assumption. The contradiction implies the lemma.   

\end{proof}

An example of such a decomposition on a surface with marked points is shown in Fig.~\ref{simple_holed}

\begin{figure}[!h]
\begin{center}
\epsfig{file=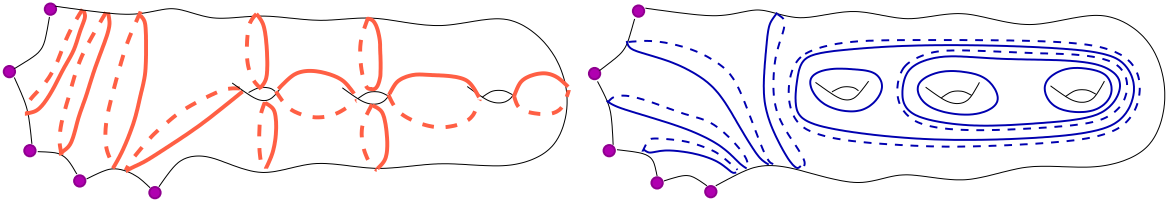,width=0.998\linewidth}
\caption{Double pants decomposition $(P_a,P_b)$ without double curves
of a surface $S$ with marked points.} 
\label{simple_holed}
\end{center}
\end{figure}

\begin{definition}[{{\it Category of topological surfaces}}]
A category $\Top_{g,n}$ of {\it topological surfaces} is one whose {\bf objects} are
 topological surfaces of genus $g$ with $n$ marked points and whose
{\bf morphisms} are elements of mapping class group $MCG(S)$.

\end{definition} 

\begin{theorem}
\label{top}
For any pair $(g,n)$ such that  $2g+n>2$
the category $\ADP_{g,n}$ contains a subcategory $\TopDP_{g,n}$ which is isomorphic to $\Top_{g,n}$. 

\end{theorem}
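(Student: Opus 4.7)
The plan is to combine the transitivity established in the Main Theorem (Theorem~\ref{open}) with the rigidity supplied by Lemma~\ref{ideal}. First, I would fix a reference admissible double pants decomposition $DP_0$ of $S=S_{g,n}$ without double curves; such a $DP_0$ exists, as exhibited in Fig.~\ref{simple_holed} for the marked case, and in general one may start from any standard decomposition and flip the shared principal curves one by one until no curve of $P_a$ coincides with one of $P_b$. By Lemma~\ref{ideal}, the stabilizer of $DP_0$ in $MCG(S)$ is trivial, so the assignment $g\mapsto g(DP_0)$ gives a bijection between $MCG(S)$ and the orbit $\mathcal{O}_0 \subset \mathrm{Obj}(\ADP_{g,n})$.

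I would then let $\TopDP_{g,n}$ be the subcategory whose objects are the elements of $\mathcal{O}_0$ and whose morphisms from $DP_1 = g_1(DP_0)$ to $DP_2 = g_2(DP_0)$ are those $\phi \in \mathrm{Hom}_{\ADP_{g,n}}(DP_1, DP_2)$ that induce the mapping class $g_2 g_1^{-1}$. The definition is sensible because each elementary morphism of $\ADP_{g,n}$ can be realized by an honest self-homeomorphism of $S$ (a flip by a half-twist supported in a four-holed sphere, a handle twist by a Dehn twist in the relevant handle), so every $FT$-morphism induces a mapping class, and Lemma~\ref{ideal} guarantees that the induced class is independent of the chosen lift. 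The Main Theorem furnishes at least one $\phi$ in every hom-set, and composition of elementary morphisms corresponds to composition of the underlying homeomorphisms, so $\TopDP_{g,n}$ is closed under composition and hence a genuine subcategory of $\ADP_{g,n}$.

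Next, I would construct the isomorphism $\Phi\colon\Top_{g,n}\to\TopDP_{g,n}$ by fixing once a reference identification of the abstract topological surface of type $(g,n)$ with the surface underlying $DP_0$ and transporting this identification along the free $MCG(S)$-action: an object of $\Top_{g,n}$ is sent to the corresponding element of $\mathcal{O}_0$, and a mapping class $g\in MCG(S)$ is sent to the unique morphism in $\TopDP_{g,n}$ realizing it. Essential surjectivity of $\Phi$ is immediate from the definition of $\mathcal{O}_0$, and fully-faithfulness reduces to the claim that $MCG(S) \to \mathrm{Hom}_{\TopDP}(DP_1,DP_2)$ is bijective for every ordered pair of objects; this follows from the Main Theorem (surjectivity onto the hom-set) together with Lemma~\ref{ideal} (injectivity of the mapping class associated to an $FT$-morphism).

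The principal obstacle, as I see it, is rigidifying the passage from an abstract $FT$-morphism to a bona fide mapping class. Concretely, one must choose for each elementary move a specific self-homeomorphism representative and then verify that the mapping class of a composition depends only on the source and target double pants decompositions, not on the particular factorization into flips and handle twists. Once this well-definedness is in place, Lemma~\ref{ideal} pins down the identification of $\mathrm{Hom}_{\TopDP}(DP_1,DP_2)$ with $MCG(S)$, and the promised isomorphism $\Top_{g,n}\cong\TopDP_{g,n}$ falls out directly from the construction.
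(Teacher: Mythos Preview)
Your overall architecture --- fix a double-curve-free $DP_0$, take its $MCG$-orbit as the object set, use Lemma~\ref{ideal} for the bijection on objects and the Main Theorem for existence of morphisms --- is exactly the paper's. The divergence, and the real gap, is in your treatment of morphisms.

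The assertion that a flip ``can be realized by an honest self-homeomorphism of $S$ (a half-twist supported in a four-holed sphere)'' is false. In a four-holed sphere $M$, the three essential simple closed curves are distinguished by the partition of $\partial M$ into pairs that they induce; any mapping class of $M$ fixing the boundary componentwise must preserve those partitions and hence fixes each of the three curves up to isotopy. So no homeomorphism supported in $M$ carries the curve $c$ to the flipped curve $c'$, and a flip carries no canonical mapping class. Consequently an $FT$-word does not induce an element of $MCG(S)$, and your definition of $\mathrm{Hom}_{\TopDP}(DP_1,DP_2)$ as ``those $\phi$ inducing $g_2g_1^{-1}$'' has no content as written. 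The paper does not attempt any such realization: its morphism argument is just the observation that every hom-set in the full subcategory on the orbit is nonempty (via the Main Theorem), after which it asserts the correspondence with $\Top_{g,n}$. If you want to make the isomorphism precise, the route is to use Lemma~\ref{ideal} again on the morphism side --- between $g_1(DP_0)$ and $g_2(DP_0)$ there is a \emph{unique} mapping class sending one to the other --- and to declare a single morphism in $\TopDP_{g,n}$ per ordered pair of objects; no $FT$-factorization is needed or helpful for that step.
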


\begin{proof}
Consider an admissible double pants decomposition $DP$ without double curves.
Consider the orbit $MCG(DP)$ of $DP$ 
under the action of the mapping class group. It follows from Lemma~\ref{ideal} that for $g_1,g_2\in MCG$, $g_1\ne g_2$ 
one has $g_1(DP)\ne g_2(DP)$.  Let $\TopDP_{g,n}$ be a subcategory of $\ADP_{g,n}$ such that the objects of 
$\TopDP_{g,n}$ are elements of the orbit $MCG(DP)$. 
The assumptions of Lemma~\ref{ideal} imply that the objects of $\TopDP_{g,n}$ are in one-to-one correspondence with the objects of 
$\Top_{g,n}$.
Furthermore, Theorem~\ref{trans} implies that for each two objects $x,y\in \TopDP_{g,n}$ there exists a morphism $x\to y$.
So, the morphisms of $\Top_{g,n}$ and $\TopDP_{g,n}$ are in one-to-one correspondence and we have an equivalence of two categories.


\end{proof}

\begin{remark}
The special choice of double pants decomposition without double curves 
in  Theorem~\ref{top} is indispensable: if $c_1,\dots,c_k\in DP$ are double curves
then all Dehn twists along $c_1,\dots,c_k$ preserve $DP$ 
and the orbit $MCG(DP)$ gives a  subcategory of $\ADP_{g,n}$ isomorphic to some proper subcategory  of $\Top_{g,n}$.

\end{remark}


\begin{thebibliography}{5}	



\bibitem{ABP} 
J.~E.~Andersen, A.~J.~Bene, R.~C.~Penner, {\em Groupoid Extensions of Mapping Class Representations for Bordered Surfaces},
Topology Appl. {\bf 156} (2009), no. 17, 2713--2725.

\bibitem{BK} B.~Bakalov, A.~Kirillov, {\em On the lego-Teichm\"uller game}, Transform. Groups {\bf 5} (2000), no. 3, 207--244.


\bibitem{CG} A.~Casson and C.~Gordon, {\em Manifolds with irreducible Heegaard splittings of arbitrarily
    high genus}, preprint.







\bibitem{FST} S.~Fomin, M.~Shapiro, D.~Thurston, {\em Cluster algebras and triangulated surfaces. Part I: Cluster complexes}, 
Acta Math. {\bf 201} (2008), 83--146.




 







\bibitem{Har}
W.~Harvey, {\em Boundary structure of the modular group}, Ann. of Math. Studies {\bf 97} (1981),
Riemann surfaces and related topics, Princeton Univ. Press, Princeton, N.J., 245-œôòó251.

\bibitem{Ha-priv} A.~Hatcher, private communication.

\bibitem{H1} A.~Hatcher, {\em Pants decompositions of surfaces}, \\  
\url{http://www.math.cornell.edu/~hatcher/Papers/pantsdecomp.pdf} 

\bibitem{HT} A.~Hatcher, W.~Thurston, {\em A presentation for the mapping class group of a closed orientable surface},
Topology {\bf 19} (1980), 221--237. 

\bibitem{He} J.~Hempel, {\em 3-manifolds as viewed from the curve complex}, Topology {\bf 40}, No. 3 (2001), 631--657.


\bibitem{I} 
N.~V.~Ivanov, {\em Automorphism of complexes of curves and of Teichm\"uller
spaces}, Internat. Math. Res. Notices, {\bf 14} (1997), 651--666.



\bibitem{IK}
E.~Irmak, M.~Korkmaz, {\em Automorphisms of the Hatcher-Thurston complex},
Israel Journal of Mathematics {\bf 162}, no 1 (2007), 183--196.





\bibitem{luo}
F.~Luo, {\em On Heegard diagrams}, Math. Res. Lett., {\bf 4} (1997), no 2-3, 365--373.
 

\bibitem{M}
D.~Margalitt, {\em Automophisms of the pants complex},
Duke Math. J. {\bf 121}, no 3 (2004), 457--479. 














\end{thebibliography}
\end{document}